\crefname{theorem}{Theorem}{Theorems}
\crefname{thm}{Theorem}{Theorems}
\crefname{lemma}{Lemma}{Lemmas}
\crefname{lem}{Lemma}{Lemmas}
\crefname{remark}{Remark}{Remarks}
\crefname{prop}{Proposition}{Propositions}
\crefname{defn}{Definition}{Definitions}
\crefname{corollary}{Corollary}{Corollaries}
\crefname{conjecture}{Conjecture}{Conjectures}
\crefname{question}{Question}{Questions}
\crefname{chapter}{Chapter}{Chapters}
\crefname{section}{Section}{Sections}
\crefname{figure}{Figure}{Figures}
\theoremstyle{plain}
\newtheorem{thm}{Theorem}[section]
\newtheorem{lemma}[thm]{Lemma}
\newtheorem{theorem}[thm]{Theorem}
\newtheorem{corollary}[thm]{Corollary}
\newtheorem{prop}[thm]{Proposition}
\theoremstyle{definition}
\newtheorem{definition}[thm]{Definition}
\theoremstyle{remark}
\numberwithin{equation}{section}
\newcommand{\D}{\mathbb D}
\renewcommand{\P}{\mathbb P}
\newcommand{\E}{\mathbb E}
\newcommand{\C}{\mathbb C}
\newcommand{\R}{\mathbb R}
\newcommand{\cD}{\mathcal D}
\newcommand{\cE}{\mathcal E}
\newcommand{\sE}{\mathscr E}
\newcommand{\eps}{\varepsilon}
\newcommand{\bP}{\mathbf P}
\newcommand{\bE}{\mathbf E}
\renewcommand{\emptyset}{\varnothing}
\def\C{{\mathcal C}}
\newcommand{\tn}{|\kern-.1em|\kern-0.1em|}
\newcommand\be{\begin{equation}}
\newcommand\ee{\end{equation}}
\def\eps{\varepsilon}
\title{\bf Harmonic Dirichlet functions on planar graphs}
\renewenvironment{abstract}
 {\par\noindent\textbf{\abstractname.}\ \ignorespaces}
 {\par\medskip}
\author{{\bf Tom Hutchcroft}}
\newcommand{\BD}{\mathbf{BD}}
\newcommand{\HD}{\mathbf{HD}}
\newcommand{\BH}{\mathbf{BH}}
\newcommand{\BHD}{\mathbf{BHD}}
\renewcommand{\Cap}{\mathrm{Cap}}
\newcommand{\bM}{\mathbf{M}}
\newcommand{\bD}{\mathbf{D}}
\newcommand{\Cont}{\mathsf{Cont}}
\newcommand{\Disc}{\mathsf{Disc}}
\renewcommand{\C}{\mathbb{C}}
\begin{document}

\date{\small{\today}}

\maketitle

\begin{abstract}
Benjamini and Schramm (\emph{Invent.\ Math.}, 126(3):565-587, 1996) used circle packing to prove that every transient, bounded degree planar graph admits non-constant harmonic functions of finite Dirichlet energy. We refine their result, showing in particular that for every transient, bounded degree, simple planar triangulation $T$ and every circle packing of $T$ in a domain $D$, there is a canonical, explicit bounded linear isomorphism between the space of harmonic Dirichlet functions on $T$ and the space of harmonic Dirichlet functions on $D$. 
% As a corollary, this isomorphism allows us to deduce several propertie
% This isomorphism yields several immediate corollaries concerning the boundary behaviour of harmonic Dirichlet functions on $M$, as we can simply push the classical results through the isomorphism.
% In the case that $D$ is the unit disc, this allows us to identify the space of all harmonic Dirichlet functions on $T$ geometrically via the Cauchy Transform, and in particular shows that the harmonic part of the Royden boundary of $T$ can be identified with the unit circle. 
% A principle ingredient of our proof is a version of the Beurling non-tangential convergence theorem for circle packings.
\end{abstract}

% \maketitle

\section{Introduction}

A \textbf{circle packing} is a collection $P$ of discs in the Riemann sphere $\C \cup \{\infty\}$ such that distinct discs in $P$ do not overlap (i.e., have disjoint interiors), but may be tangent. Given a circle packing $P$, its \textbf{tangency graph} (or \textbf{nerve}) is the graph whose vertices are the discs in $P$ and where two vertices are connected by an edge if and only if their corresponding discs are tangent. The Circle Packing Theorem \cite{K36,Th78} states that every finite, simple\footnote{A graph is said to be \textbf{simple} if it does not contain any loops or multiple edges.} planar graph may be represented as the tangency graph of a circle packing, and that if the graph is a triangulation (i.e., every face has three sides) then the circle packing is unique up to M\"obius transformations and reflections. See e.g.\ \cite{St05,Rohde11} for further background on circle packing. 

The Circle Packing Theorem was extended to infinite, simple planar triangulations by He and Schramm \cite{HS93,HeSc,Schramm91,he1999rigidity}. In particular, they showed that if the triangulation is \emph{simply connected}, meaning that the surface formed by gluing triangles according to the combinatorics of the triangulation is homeomorphic to the plane, then the triangulation can be circle packed in either the disc or the plane, but not both\footnote{Here the word \textbf{in} is being used in a technical sense to mean that the \textbf{carrier} of the circle packing is equal to either the disc or the plane, see \cref{subsec:mapsdcp}.}; we call the triangulation \textbf{CP parabolic} or \textbf{CP hyperbolic} accordingly. More generally, they showed that, in the CP hyperbolic case, the triangulation can be circle packed in \emph{any} simply-connected domain $D \subsetneq \C$. These results can be viewed as discrete analogue of the Riemann mapping theorem and of the uniformization theorem for Riemann surfaces. Indeed, the theory of circle packing is closely related to the theory of conformal mapping and geometric function theory, see e.g.\ \cite{HS93,RS87,Rohde11,St05,MR3755822} and references therein.

He and Schramm  also pioneered the use of circle packing to study probabilistic questions about planar graphs, showing in particular that a bounded degree, simply connected, planar triangulation is CP parabolic if and only if it is recurrent for simple random walk \cite{HeSc}. This result was recently generalised by Gurel-Gurevich, Nachmias, and Suoto \cite{GGNS15}, who proved that a (not necessarily simply connected) bounded degree planar triangulation admitting a circle packing in a domain $D$ is recurrent for simple random walk if and only if the domain is recurrent for Brownian motion.
 % (i.e., if and only if Brownian motion never leaves the domain almost surely). 

% \begin{figure}
% \centering
% \includegraphics[
%  trim = 3.24em 10em 3em 10em, clip,
% width=0.3\textwidth]{Seed20.pdf}
% \hspace{1.3cm}
% \includegraphics[trim = 4cm 3.7cm 4cm 3.7cm, clip, height=0.3\textwidth]{multiple3.jpg}
% \caption{Circle packings}
% \end{figure}

A more detailed study of the relationship between circle packing and random walks was initiated by Benjamini and Schramm \cite{BS96a},
who proved in particular that if $T$ is a bounded degree triangulation circle packed in the unit disc $\D$, then the random walk on $T$ converges almost surely to a point in the boundary $\partial \D$, and the law of this limit point is non-atomic. 
They used this to deduce the existence of various kinds of \emph{harmonic functions} on transient, bounded degree planar graphs. 
 % later applied circle packing to study \textbf{harmonic functions} on planar graphs. 
Recall that a function $h$ on the vertex set of a simple, locally finite graph $G=(V,E)$ is said to be \textbf{harmonic} if 
\[h(v)=\frac{1}{\deg(v)}\sum_{u\sim v}h(u) \]
for every $v\in V$. Here and elsewhere, we write $V$ and $E$ for the vertex and edge sets of a graph $G$, and write $u\sim v$ if the vertices $u$ and $v$ are adjacent in $G$. 
% Probabilistically, $h:V\to \R$ is harmonic if and only if $\langle h(X_n)\rangle_{n\geq 0}$ is a martingale when $\langle X_n \rangle_{n\geq0}$ is a simple random walk on $G$. 
 % or equivalently if 
 % Equivalently, $h$ is harmonic 
 % if the process $\langle h(X_n) \rangle_{n\geq0}$ is a martingale whenever $\langle X_n \rangle_{n\geq0}$ is a simple random walk on $G$. 
 Three particularly important and probabilistically meaningful classes of harmonic functions are the \emph{bounded harmonic functions}, 
 % (which are encode possible `behaviours at infinity' of the random walk \cite{LP:book})
  the \emph{positive harmonic functions},
   % (which are related to possible singular conditionings of the random walk via Doob's $h$-transform \cite{RodgersWilliamsbookV1}),
    and the \emph{harmonic Dirichlet functions}. It is an easy consequence of the Benjamini-Schramm convergence theorem that every bounded degree, transient planar graph admits non-constant harmonic functions in each of these three classes. 
    % (which are related to the electrical properties of the graph and to uniform spanning forests \cite{BLPS,HutNach15b}).
     Here, a \textbf{harmonic Dirichlet function} on a graph with oriented edge set $E^\rightarrow$ is a harmonic function $h$ such that
\[
\cE(h) = \frac{1}{2}\sum_{e\in E^\rightarrow} \left[ h\big(e^+\big)-h\big(e^-\big) \right]^2 <\infty.
\]
We denote the space of harmonic Dirichlet functions on a graph $G$ by $\HD(G)$ and the space of bounded harmonic Dirichlet functions on $G$ by $\BHD(G)$. For each vertex $v$ of $G$, $\|h\|=h(v)^2+\cE(h)$ is a norm on $\HD(G)$, and $\BHD(G)$ is dense in $\HD(G)$ with respect to this norm \cite[Theorem 3.73]{Soardibook}. (Without the $h(v)^2$ term this would be a seminorm rather than a norm.)  Harmonic Dirichlet functions and function spaces on domains are defined similarly; see \cref{subsec:Dirichlet} for details.

More recently, Angel, Barlow, Gurel-Gurevich, and Nachmias \cite{ABGN14} showed that \emph{every} bounded harmonic function and every positive harmonic function on a bounded degree, simply connected, simple  planar triangulation can be represented geometrically in terms of the triangulation's circle packing in the unit disc. 
A similar representation theorem for bounded (but not positive) harmonic functions using a different embedding, the \emph{square tiling}, was obtained slightly earlier by Georgakopoulos~\cite{G13}. Simpler proofs of both results for bounded harmonic functions have since been obtained by Peres and the author \cite{hutchcroft2015boundaries}.

In this paper we establish a similar representation theorem for harmonic Dirichlet functions.  We begin with a simple form of the result that can be stated with minimum preparation. 
We say that two functions $\phi$ and $\psi$ on the vertex set of a graph are \textbf{asymptotically equal} if the set $\{v\in V: |\phi(v)-\psi(v)|\geq \eps\}$ is finite for every $\eps>0$. 

\begin{theorem}
\label{thm:isomorphismdisc}
Let $T$ be a bounded degree, simply connected, simple, planar triangulation, let $P$ be a circle packing of $T$ in the unit disc $\D$, and let $z:V\to \D$ be the function sending vertices to the centres of their corresponding discs. 
% Then the following hold:
\begin{enumerate}
	\item For each bounded harmonic Dirichlet function $h \in \BHD(T)$, there exists a unique harmonic Dirichlet function $H \in \HD(\D)$ such that $h$ and $H\circ z$ are asymptotically equal. 
	\item For each bounded harmonic Dirichlet function $H \in \BHD(\D)$, there exists a unique harmonic Dirichlet function $h \in \HD(T)$ such that $h$ and $H \circ z$ are asymptotically equal. 
\end{enumerate}
Moreover, the function assigning each $h\in \BHD(T)$ to the unique $H \in \HD(\D)$ such that $H\circ z$ is asymptotically equal to $h$ can be uniquely extended to a bounded linear isomorphism from $\HD(T)$ to $\HD(\D)$. 
	% \item
\end{theorem}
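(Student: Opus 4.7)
The plan is to build the isomorphism in three stages: first define the map on $\BHD(T)$ via boundary data, then establish a two-sided Dirichlet-energy comparison using an explicit interpolant, and finally extend by density.

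\textbf{Step 1 (the map on $\BHD(T)$).} I would combine the Benjamini-Schramm convergence theorem with the boundary representation theorem of Angel-Barlow-Gurel-Gurevich-Nachmias (and its reproof by Peres and the author): random walk $(X_n)$ on $T$ converges almost surely to a point $\Xi_\infty \in \partial \D$; the hitting measure $\mu_v$ of $\Xi_\infty$ started from $v$ is mutually absolutely continuous with the harmonic measure $\omega_{z(v)}$ on $\partial \D$ from $z(v)$ with densities bounded above and below (using the Gurel-Gurevich-Nachmias-Souto comparisons); and every $h \in \BHD(T)$ admits a Poisson representation $h(v) = \int_{\partial \D} f \, d\mu_v$ for a unique $f \in L^\infty(\partial \D)$. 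Set $\Phi_0(h)(w) = \int_{\partial \D} f \, d\omega_w$, so $\Phi_0(h) \in \BHD(\D)$. Since both $\mu_v$ and $\omega_{z(v)}$ concentrate on arbitrarily small arcs of $\partial \D$ as $v$ escapes every finite subset of $T$, both $h(v)$ and $\Phi_0(h)(z(v))$ limit to the same boundary value, making $h$ and $\Phi_0(h)\circ z$ asymptotically equal. Uniqueness of $H$ in the statement follows from the fact that a harmonic Dirichlet function on $\D$ is determined by its nontangential boundary values.

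\textbf{Step 2 (Dirichlet bound for $\Phi_0$).} The central analytic step is to show $\cE_\D(\Phi_0(h)) \leq C(\Delta)\,\cE_T(h)$, where $\Delta = \max\deg(T)$. I would interpolate $h$ into a continuous function $\tilde h$ on the carrier of $P$ (which equals $\D$ up to a polar set, by Gurel-Gurevich-Nachmias-Souto): triangulate the carrier by the ``tangency triangles'' joining $z(u), z(v), z(w)$ for each face $\{u,v,w\}$ of $T$, and let $\tilde h$ be the piecewise affine extension with value $h(v)$ at $z(v)$. The He-Schramm ring lemma keeps the aspect ratios of these Euclidean triangles bounded in terms of $\Delta$, so a direct calculation compares the continuous Dirichlet energy of $\tilde h$ to the discrete Dirichlet energy of $h$ up to a constant depending only on $\Delta$. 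A Fatou-type argument, using that the nontangential limits of $\tilde h$ coincide almost everywhere with the limits of $h$ along the random walk (and hence with the nontangential limits of $\Phi_0(h)$), shows that $\tilde h - \Phi_0(h)$ lies in the Dirichlet closure of compactly supported functions on $\D$. Dirichlet orthogonality in the Royden decomposition then yields $\cE_\D(\Phi_0(h)) \leq \cE_\D(\tilde h) \leq C(\Delta)\,\cE_T(h)$.

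\textbf{Step 3 (Reverse direction and density extension).} For any $H \in \BHD(\D)$ with nontangential boundary data $f$, set $\Psi_0(H)(v) = \int f \, d\mu_v$; then $\Phi_0 \circ \Psi_0 = \mathrm{id}_{\BHD(\D)}$ and $\Psi_0 \circ \Phi_0 = \mathrm{id}_{\BHD(T)}$ by uniqueness of the Poisson representations. To bound $\cE_T(\Psi_0(H))$ in terms of $\cE_\D(H)$, I would test against the competitor $g(v) = \pi^{-1} r_v^{-2}\int_{D_v} H \, dA$, the average of $H$ over the disc $D_v$ attached to $v$. Telescoping edge differences of $g$ into line integrals of $\nabla H$ along the segments joining adjacent disc centres, combined with Poincar\'e inequalities on unions of tangent discs (again controlled by the ring lemma), yields $\cE_T(g) \leq C'(\Delta)\,\cE_\D(H)$; but $\Psi_0(H)$ minimizes the discrete Dirichlet energy among functions on $T$ with these boundary data, so $\cE_T(\Psi_0(H)) \leq \cE_T(g)$. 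Combining with Step 2 makes $\Phi_0: \BHD(T) \to \BHD(\D)$ a bi-Lipschitz isomorphism for the Dirichlet seminorm; density of $\BHD$ in $\HD$ and the bounded linear transformation theorem then extend $\Phi_0$ uniquely to a bounded linear isomorphism $\HD(T) \to \HD(\D)$.

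\textbf{Main obstacle.} The step I expect to be most delicate is the boundary identification in Step 2 (and symmetrically in Step 3): showing that the interpolant $\tilde h$ and the Poisson extension $\Phi_0(h)$ share nontangential boundary values almost everywhere on $\partial \D$. This reduces to comparing the scale of random walk on $T$ near the boundary with the scale of the circles, and establishing enough regularity of $\tilde h$ along radii that the pointwise convergence $h(v) \to f(\zeta)$ along the random walk can be upgraded to nontangential convergence of $\tilde h$ to $f$. The combined machinery of the ring lemma, the Angel-Barlow-Gurel-Gurevich-Nachmias convergence, and the Gurel-Gurevich-Nachmias-Souto exit-measure comparison should suffice, but pinning down the precise modulus of control will require care.
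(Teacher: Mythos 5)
Your energy estimates are essentially the ones the paper uses: the piecewise-affine extension $\mathsf{A}$ with the Ring Lemma controlling triangle aspect ratios, and the disc-average competitor $g=\mathsf{R}[H]$ with line-integral/Radon--Nikodym estimates, together give the two-sided bound $\cE(\mathsf{A}[\phi])\preceq\cE(\phi)$ and $\cE(\mathsf{R}[\Phi])\preceq\cE(\Phi)$. The gap is in how you decide \emph{which} harmonic Dirichlet function on $\D$ corresponds to a given $h$. You route this through the boundary: you define $\Phi_0(h)(w)=\int f\,d\omega_w$ where $f$ is the ABGN boundary function for $h$, and you justify this by asserting that the exit measure $\mu_v$ of the walk and harmonic measure $\omega_{z(v)}$ are mutually absolutely continuous with two-sided bounded densities. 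That assertion is not a theorem and is false in general: the Gurel-Gurevich--Nachmias--Souto result only matches \emph{recurrence types}, and the paper explicitly warns that the walk's harmonic measure "can be singular with respect to the corresponding measure for Brownian motion" --- which is precisely why, for merely bounded harmonic $h$, no asymptotically equal $H$ need exist. Without absolute continuity, $\int f\,d\omega_w$ is not even well defined from the $\mu$-equivalence class of $f$. The subsequent claim that concentration of both measures on small arcs forces $h(v)$ and $\Phi_0(h)(z(v))$ to converge to "the same boundary value" also fails for discontinuous $f$; asymptotic equality is exactly the strong conclusion that needs proving, and in the paper it requires the elliptic Harnack inequality plus a capacity accumulation argument (Proposition \ref{prop:uniformlytransient} and Lemma \ref{lem:muchmoredetail}), not measure concentration.

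The same issue resurfaces in your Step 2: deducing $\tilde h-\Phi_0(h)\in\bD_0(\D)$ from a.e.\ agreement of nontangential boundary limits is not valid, because membership in $\bD_0$ is a \emph{capacity} (quasi-everywhere) condition on the boundary behaviour, strictly stronger than a Lebesgue-a.e.\ condition; likewise your Step 3 appeal to "$\Psi_0(H)$ minimizes energy among functions with these boundary data" presupposes $g-\Psi_0(H)\in\bD_0(T)$, which is again the crux. The paper's fix is to abandon the boundary entirely and work with the Royden decomposition: define $\Cont[h]=(\mathsf{A}[h])_{\HD}$ and $\Disc[H]=(\mathsf{R}[H])_{\HD}$, then prove directly that $\phi-\mathsf{R}[\mathsf{A}[\phi]]\in\bD_0(M)$ (a pointwise $\ell^2$ estimate) and that $\mathsf{A}$ and $\mathsf{R}$ preserve $\bD_0$ in \emph{both} directions, the nontrivial direction resting on the capacity characterisation of $\bD_0$ (Proposition \ref{lem:D0char}) and the capacity comparison $\Cap(A)\asymp_\delta\Cap(P_\delta(A))$ (Lemma \ref{lem:CapComparison}). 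If you replace your boundary-measure identification with these capacity arguments, your outline becomes the paper's proof; as written, the identification step does not go through.
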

By a bounded linear isomorphism we mean a bounded linear map with a bounded inverse; such an isomorphism need not be an isometry. A more general form of our theorem, applying in particular to bounded degree, multiply-connected planar triangulations circle packed in arbitrary domains, is given in \cref{thm:isomorphismgeneral}. See \eqref{eq:discdef} and \eqref{eq:contdef} for an explicit description of the isomorphism.

% If one considers weaker notions of equivalence than asymptotic equivalence (e.g.\ \emph{almost-asymptotic equivalence} or \emph{quasi-asymptotic equivalence}), then one can describe the isomorphism directly on all of $\HD(T)$, see \cref{thm:isomorphismgeneral} and the discussion succeeding it. See \cref{subsec:introexplicit} for an explicit, algorithmic way of computing the isomorphism. 

% The proof of \cref{thm:isomorphismdisc} is short and straightforward, and is much simpler than the corresponding results 

Note that \cref{thm:isomorphismdisc,thm:isomorphismgeneral} are much stronger than those available for bounded and or positive harmonic functions. For example, the representation theorem for bounded harmonic functions \cite{ABGN14} requires one to take integrals over the harmonic measure on the boundary, which is not particularly well understood and can be singular with respect to the corresponding measure for Brownian motion. As a consequence, there can exist bounded harmonic functions $h$ on $T$ such that $h$ is not asymptotically equal to $H \circ z$ for any bounded harmonic function $H$ on $\D$. 
The difference in strength between these theorems is unsurprising given that the \emph{existence} of non-constant harmonic Dirichlet functions is known to be stable under various perturbations of the underlying space \cite{Soardi93,holopainen1997p,Gab05}, while the existence of non-constant bounded harmonic functions is known to be unstable in general under similar perturbations \cite{BS96a}. 
 % properties than the existence non-constant 

% , we cannot necessarily pair up bounded harmonic functions 
% since for example in $(1)$

\subsection{Applications}

\cref{thm:isomorphismdisc} also allows us to deduce various facts about the boundary behaviour of harmonic Dirichlet functions on circle packings of triangulations from the corresponding facts about harmonic Dirichlet functions on the unit disc. For example, we immediately obtain a representation theorem for the harmonic Dirichlet functions on $T$ in terms of boundary functions, similar to that obtained for bounded harmonic functions in \cite{ABGN14}. 
We say that a Borel function $\phi: \partial \D \to \R$ 
is \textbf{Douglas integrable} if
\begin{equation}
\label{eq:Douglas}
  \cD(\phi):= \frac{1}{4\pi}\int_{\partial \D} \int_{\partial \D} \left| \frac{\phi(\xi)-\phi(\zeta)}{\xi - \zeta}\right|^2 \dif \xi \dif \zeta <\infty. 
\end{equation}
Note in particular that every Lipschitz function on $\partial \D$ is Douglas integrable. 
It is a classical theorem of Douglas \cite{MR1501590} that a harmonic function $H:\D\to\R$ is Dirichlet if and only if it is the extension of a Douglas integrable function $\phi:\partial\D\to\R$, and in this case $\cD(\phi)=\cE(h)$. This equality is known as the \emph{Douglas integral formula}.  Thus, we obtain the following corollary to \cref{thm:isomorphismdisc}.

\begin{corollary}
\label{cor:douglas}
Let $T$ be a bounded degree, simply connected, simple, planar triangulation and let $P$ be a circle packing of $T$ in the unit disc $\D$. Then a function $h:V\to \R$ is a harmonic Dirichlet function if and only if there exists a Douglas integrable Borel function $\phi:\partial \D\to\R$  
such that
\[h(v) = \bE_v \left[ \phi\left(\lim_{n\to\infty} z(X_n) \right) \right] \qquad \text{for every vertex $v$.}\]
\end{corollary}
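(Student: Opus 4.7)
The plan is to combine Theorem~\ref{thm:isomorphismdisc} with the classical Douglas integral formula for $\HD(\D)$ and the nontangential convergence of the random walk $z(X_n)$ to the boundary $\partial \D$.

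For the forward direction, assume first that $h \in \BHD(T)$. Applying Theorem~\ref{thm:isomorphismdisc} produces a unique $H \in \HD(\D)$ with $h$ and $H \circ z$ asymptotically equal. Since $h$ is bounded and $z(V)$ is dense in $\D$, the asymptotic equality together with continuity of $H$ forces $H$ to be bounded on $\D$; classical Fatou theory then provides nontangential boundary values $\phi$ defined almost everywhere on $\partial \D$, and the Douglas integral formula ensures that $\phi$ is Douglas integrable with $\cD(\phi)=\cE(H)$. The second input is that $z(X_n) \to X_\infty \in \partial \D$ almost surely and that this convergence is nontangential almost surely, a fact implicit in the Benjamini--Schramm convergence theorem and used explicitly in \cite{ABGN14}. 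Together these imply $H(z(X_n)) \to \phi(X_\infty)$ almost surely, and because the walk eventually leaves every finite set the asymptotic equality gives $h(X_n) - H(z(X_n)) \to 0$ almost surely as well. Hence $h(X_n) \to \phi(X_\infty)$ almost surely, and since $(h(X_n))_{n\geq 0}$ is a bounded martingale, the bounded martingale convergence theorem yields $h(v) = \bE_v[\phi(X_\infty)]$ for every vertex $v$.

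For the converse, suppose $\phi : \partial \D \to \R$ is bounded and Douglas integrable; then its Poisson extension $H$ belongs to $\BHD(\D)$, and applying the inverse of the isomorphism of Theorem~\ref{thm:isomorphismdisc} produces $h \in \BHD(T)$ satisfying the desired formula by the argument above. A general Douglas integrable $\phi$ is then handled by truncating $\phi_n = (\phi \wedge n) \vee (-n)$: the Poisson extensions $H_n$ converge to $H$ in $\HD(\D)$ because $\cD(\phi_n - \phi) \to 0$, hence the associated $h_n$ converge to $h$ in $\HD(T)$ and in particular pointwise by continuity of the isomorphism. On the probabilistic side, an a priori $L^2$ bound controlling $\bE_v[\phi(X_\infty)^2]$ in terms of $\cD(\phi)$ and a single point value of $\phi$ (obtainable by combining Theorem~\ref{thm:isomorphismdisc} with the energy identity for the martingale $h(X_n)$) justifies passing to the limit $\bE_v[\phi_n(X_\infty)] \to \bE_v[\phi(X_\infty)]$ via dominated convergence.

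The main technical obstacle is the nontangential convergence $z(X_n) \to X_\infty$, which is what allows boundary limits of $H$ to be evaluated along random-walk trajectories and thereby couples the Douglas formula on $\D$ with the martingale representation on $T$. This is a substantive output of the probabilistic theory of circle packing but will be taken as a black box; given it, the argument is essentially a translation between the two representations through the isomorphism of Theorem~\ref{thm:isomorphismdisc}.
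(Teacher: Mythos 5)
Your overall strategy (combine \cref{thm:isomorphismdisc} with the Douglas integral formula and the probabilistic representation of the harmonic part) is the intended one, but the link you use to couple the two sides is broken. You pass from $H\in\HD(\D)$ to the boundary function $\phi$ via Fatou's theorem and then claim $H(z(X_n))\to\phi(X_\infty)$ by invoking, as a black box, that $z(X_n)\to X_\infty$ \emph{nontangentially} almost surely. That is not a fact available in \cite{BS96a} or \cite{ABGN14}: those papers prove convergence of $z(X_n)$ to a boundary point and that the exit law is non-atomic with full support, but say nothing about the geometry of the approach, and no such nontangential convergence theorem is known. Worse, even granting it, the deduction fails: Fatou's theorem only identifies the nontangential limit of $H$ with $\phi$ for Lebesgue-a.e.\ $\xi$, while the exit measure of $z(X_n)$ can be \emph{singular} with respect to Lebesgue measure on $\partial\D$ (this is exactly the point emphasised in the discussion following \cref{thm:isomorphismdisc}). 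So $X_\infty$ may almost surely land in the Lebesgue-null set where your $\phi$ carries no information, and the formula $h(v)=\bE_v[\phi(X_\infty)]$ does not follow. The same problem sinks the converse direction, where $\phi$ is prescribed and you cannot choose its version.

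The correct bridge avoids Fatou theory entirely and uses the stronger content of the theorem: $h-H\circ z\in\bD_0(T)$ (not merely asymptotic equality), so by \eqref{eq:ALPlimitdisc} and \eqref{eq:ALPlimitidentification} one has $\lim_n h(X_n)=\lim_n H(z(X_n))$ almost surely and $h(v)=\bE_v\bigl[\lim_n H\circ z(X_n)\bigr]$ directly, for \emph{every} $h\in\HD(T)$ (no restriction to $\BHD$, which your forward direction never removes). The remaining task is to realise this almost sure limit as $\phi(X_\infty)$ for a Borel $\phi$ in the correct Douglas class; this goes through the Poisson boundary identification of \cite{ABGN14} (every invariant random variable of the walk is a.s.\ a Borel function of $X_\infty$) together with a comparison of that function with the Douglas boundary data of $H$, rather than through boundary limits of $H$ evaluated along the walk. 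Two smaller points: $z(V)$ is \emph{not} dense in $\D$ (the centres accumulate only at $\partial\D$), so your argument that $\Cont[h]$ is bounded needs repair --- it follows instead from the representation $\Cont[h](z)=\E_z[\lim_t \mathsf{A}[h](B_t)]$ with $\mathsf{A}[h]$ bounded, or from the continuity estimate of \cref{lem:continuity}; and your truncation step $\cD(\phi_n-\phi)\to 0$ is fine but should be justified as a standard Dirichlet-form fact about normal contractions.
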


We remark that there is a generalization of the Douglas integral formula to other domains due to Doob \cite{doob1962boundary}, and that related results for graphs have been announced by Georgakopoulos and Kaimanovich \cite{georgakopoulos2015group}. The results of Doob could be combined with \cref{thm:isomorphismgeneral} to obtain versions of \cref{cor:douglas} for more general domains. We do not pursue this here. 

\medskip

Similarly, we can immediately deduce the following very strong boundary convergence result from \cref{thm:isomorphismdisc} together with a theorem of Nagel, Rudin, and Shapiro \cite{MR672838}. 

\begin{corollary}[Boundary convergence in exponentially tangential approach regions]
\label{cor:exponentiallytangential}
Let $T$ be a bounded degree, simply connected, simple, planar triangulation, let $P$ be a circle packing of $T$ in the unit disc $\D$, and let $z:V\to \D$ be the function sending vertices to the centres of their corresponding discs. Then for each $h\in \BHD(T)$, the following holds for Lebesgue-a.e.\ $\xi\in \partial \D$: For every sequence of vertices $v_1,v_2,\ldots$ of $T$ such that $z(v_i) \to \xi$ and 
\[\limsup_{i\to\infty} |z(v_i)-\xi| \log \frac{1}{1-|z(v_i)|} <\infty,\]
the limit $\lim_{i\to\infty} h(v_i)$ exists. 
\end{corollary}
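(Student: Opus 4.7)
The plan is a direct reduction to the continuum theorem of Nagel, Rudin, and Shapiro via the isomorphism provided by \cref{thm:isomorphismdisc}. Given $h \in \BHD(T)$, I would first apply \cref{thm:isomorphismdisc} to produce the unique $H \in \HD(\D)$ such that $h$ and $H \circ z$ are asymptotically equal. Then I would invoke the Nagel--Rudin--Shapiro theorem \cite{MR672838}, which asserts that any harmonic Dirichlet function on $\D$ admits boundary values at Lebesgue-a.e.\ $\xi \in \partial \D$ along the exponentially tangential approach region
\[
\bigl\{z \in \D : |z-\xi|\, \log \tfrac{1}{1-|z|} \le C\bigr\}
\]
for every $C < \infty$. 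Call the exceptional Lebesgue-null set $N \subset \partial \D$, and let $\phi:\partial \D \setminus N \to \R$ be the resulting boundary function for $H$.

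The remaining step is a transfer argument: I need to replace $H(z(v_i))$ by $h(v_i)$ along any admissible sequence. Fix $\xi \in \partial \D \setminus N$ and let $v_1,v_2,\ldots$ be any sequence of vertices with $z(v_i) \to \xi$ and $\limsup_i |z(v_i)-\xi|\log(1-|z(v_i)|)^{-1} < \infty$. Since $T$ is locally finite and $z$ takes only finitely many values on any finite vertex set, the condition $z(v_i) \to \xi \in \partial \D$ forces $v_i$ to leave every finite subset of $V$. The definition of asymptotic equality then yields $|h(v_i) - H(z(v_i))| \to 0$, since for every $\eps > 0$ only finitely many vertices $v$ satisfy $|h(v) - H(z(v))| \ge \eps$, and eventually $v_i$ avoids this finite set. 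Combined with the Nagel--Rudin--Shapiro convergence $H(z(v_i)) \to \phi(\xi)$, this gives $h(v_i) \to \phi(\xi)$.

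I do not expect any genuine obstacle here: the work is entirely contained in \cref{thm:isomorphismdisc} and \cite{MR672838}, and the linking argument is the elementary observation that escape to the boundary in $\D$ forces escape to infinity in $T$. The only mild subtlety worth being explicit about is that the statement of \cref{thm:isomorphismdisc} guarantees only that the associated $H$ lies in $\HD(\D)$ and not in $\BHD(\D)$; fortunately the Nagel--Rudin--Shapiro theorem applies to all of $\HD(\D)$, so boundedness of $h$ is used only to invoke part (1) of \cref{thm:isomorphismdisc} and plays no further role.
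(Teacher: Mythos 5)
Your proposal is correct and is exactly the argument the paper intends: the corollary is stated there as an immediate consequence of \cref{thm:isomorphismdisc} combined with the Nagel--Rudin--Shapiro theorem, and your transfer step (escape to $\partial\D$ forces escape from every finite vertex set, so asymptotic equality gives $|h(v_i)-H(z(v_i))|\to 0$) is the elementary linking observation the paper leaves implicit. Your remark that $H$ need only lie in $\HD(\D)$, not $\BHD(\D)$, and that this suffices for the continuum theorem, is a correct and worthwhile clarification.
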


See \cite{MR3185375} and references therein for several further results concerning the boundary behaviour of harmonic Dirichlet functions on the unit disc. 

Together with the Poisson boundary identification result of \cite{ABGN14}, \cref{cor:douglas} gives us a good understanding of the relationship between the space of bounded Harmonic Dirichlet functions $\BHD(T)$ and the space of all bounded harmonic functions, denoted $\BH(T)$: The latter is identified with the space of bounded Borel functions $L^\infty(\partial \D)$, while the former is identified with the space of bounded Douglas integrable functions on $\partial \D$. In particular, this allows us to easily generate many examples of bounded harmonic functions on $T$ that are not Dirichlet, such as harmonic extensions of indicator functions.  Moreover, since the identification of $\BH(T)$ and $L^\infty(\partial\D)$ is easily seen to be a homeomorphism when $\BH(T)$ is equipped with the topology of pointwise convergence and $L^\infty(\partial\D)$ is given the subspace topology from $L^1(\partial \D)$, and since the Lipschitz functions are dense in $L^1(\partial \D)$, we obtain the following interesting corollary concerning harmonic functions on triangulations. 

\begin{corollary}
\label{cor:Ori}
Let $T$ be a bounded degree, simply connected, simple, planar triangulation. Then $\BHD(T)$ is dense in $\BH(T)$ with respect to the topology of pointwise convergence.
\end{corollary}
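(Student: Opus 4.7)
The paragraph preceding the corollary essentially telegraphs the strategy, so my plan would be to combine \cref{cor:douglas} with the Poisson boundary identification of \cite{ABGN14}, and then use the fact that Lipschitz boundary functions are both dense in $L^1(\partial \D)$ and automatically Douglas integrable. Let me spell this out.

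First I would invoke \cite{ABGN14} to identify $\BH(T)$ with $L^\infty(\partial \D, \mu)$, where $\mu$ is the harmonic measure for the random walk (equivalently, the pushforward under $z$ of the hitting law on $\partial \D$); the identification sends a bounded Borel function $\phi$ to $H_\phi(v) = \bE_v[\phi(\lim_n z(X_n))]$. By that same result the harmonic measures $\mu_v$ from different vertices are mutually absolutely continuous, with Radon-Nikodym derivatives bounded by the Harnack principle. Next I would invoke \cref{cor:douglas}: a bounded function $H_\phi \in \BH(T)$ lies in $\BHD(T)$ precisely when $\phi$ admits a Douglas integrable representative. In particular, every Lipschitz $\phi : \partial \D \to \R$ produces an element of $\BHD(T)$, because Lipschitz functions on $\partial\D$ are trivially Douglas integrable.

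Given $h \in \BH(T)$, let $\phi$ be its boundary function with $\|\phi\|_\infty \leq \|h\|_\infty$. Using the density of Lipschitz functions in $L^1(\partial \D, \mathrm{Leb})$, together with a truncation at height $\|\phi\|_\infty$, I would choose Lipschitz functions $\phi_n$ satisfying $\|\phi_n\|_\infty \leq \|\phi\|_\infty$ and $\phi_n \to \phi$ almost everywhere with respect to Lebesgue measure on $\partial\D$. The mutual absolute continuity of all the $\mu_v$ and their absolute continuity with respect to Lebesgue (also from \cite{ABGN14}) upgrades this to $\phi_n \to \phi$ almost everywhere with respect to $\mu_v$, simultaneously for every $v$. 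Setting $h_n(v) := \bE_v[\phi_n(\lim_n z(X_n))]$, the previous paragraph gives $h_n \in \BHD(T)$, and dominated convergence (with dominating constant $\|\phi\|_\infty$) yields $h_n(v) \to h(v)$ for every vertex $v$, completing the proof.

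The main conceptual point to verify is the topological identification: pointwise convergence on $\BH(T)$ really does correspond to weak/$L^1$-type convergence of boundary functions under a uniform $L^\infty$ bound, and this is exactly what dominated convergence supplies. Aside from this routine translation, nothing in the argument is subtle; the whole proof is essentially a soft consequence of the previously established representation theorems together with the trivial inclusion of Lipschitz functions into the Douglas class.
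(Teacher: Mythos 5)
Your overall strategy is the intended one, but there is a genuine gap in the measure-theoretic transfer step. You approximate the boundary function $\phi$ by Lipschitz functions in $L^1(\partial\D,\mathrm{Leb})$ and then invoke absolute continuity of the exit measures $\mu_v$ with respect to Lebesgue measure (``also from \cite{ABGN14}'') to upgrade Lebesgue-a.e.\ convergence to $\mu_v$-a.e.\ convergence. That absolute continuity is not proven in \cite{ABGN14} and is not available: the introduction of this paper explicitly warns that the exit measure of $z(X_n)$ can be \emph{singular} with respect to the harmonic measure for Brownian motion, i.e.\ with respect to Lebesgue measure on $\partial\D$ --- this is precisely why the representation theorem for $\BH(T)$ is weaker than the one for harmonic Dirichlet functions. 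If $\mu_v$ is singular with respect to Lebesgue, a sequence converging Lebesgue-a.e.\ need not converge $\mu_v$-a.e., and your dominated convergence step collapses. Note also that for the identification of $\BH(T)$ with $L^\infty(\partial\D)$ to be a bijection at all, the relevant measure class must be that of harmonic measure, not Lebesgue measure.

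The repair is short and is what the paper intends: perform the approximation directly in $L^1(\partial\D,\mu)$, where $\mu=\mu_o$ is the exit measure from a fixed root. Continuous, hence Lipschitz, functions are dense in $L^1$ of any finite Borel measure on the circle, so one may choose Lipschitz $\phi_n$ with $\|\phi_n\|_\infty\le\|\phi\|_\infty$ and $\|\phi_n-\phi\|_{L^1(\mu_o)}\to 0$. Each $\phi_n$ is still Douglas integrable (Douglas integrability is a Lebesgue-measure condition and does not interact with $\mu$), so $h_n\in\BHD(T)$ by \cref{cor:douglas}. Then for each vertex $v$,
\[
|h_n(v)-h(v)| \le \left\|\frac{d\mu_v}{d\mu_o}\right\|_\infty \|\phi_n-\phi\|_{L^1(\mu_o)} \to 0,
\]
using the Harnack-type bound $\mu_v\le C_v\,\mu_o$ that you correctly identified. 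The remainder of your argument --- Lipschitz implies Douglas integrable, the use of \cref{cor:douglas}, and the Poisson boundary identification --- is fine.
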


A nice feature of this corollary is that it is an `intrinsic' result, whose statement does not make any reference to circle packing. 
\cref{cor:douglas,cor:exponentiallytangential,cor:Ori} all have straightforward extensions to simply connected, weighted, polyhedral planar with bounded codegrees and bounded local geometry, both of which follow from \cref{thm:isomorphismgeneral}.

\cref{thm:isomorphismdisc} and its generalization \cref{thm:isomorphismgeneral} are also useful in the study of uniform spanning forests of planar graphs, for which closed linear subspaces of $\HD(T)$ correspond, roughly speaking, to possible boundary conditions at infinity for the spanning forest measure. In particular, \cref{thm:isomorphismgeneral} will be applied in forthcoming work with Nachmias on uniform spanning forests of multiply-connected planar maps. 

% Let us remark on some related results in the literature. 
% \begin{remark}[Related work]

	% \item $M$ is the tange

% A function $h: V \to \R$ is said to be \textbf{harmonic} if 
% \[h(v) = \frac{1}{c(v)}\sum_{e^-=v} c(e)h(e^+)\]
% for every $v\in V$, or equivalently if the process $\langle h(X_n) \rangle_{n\geq0}$ is a martingale whenever $\langle X_n \rangle_{n\geq0}$ is a random walk on $M$.

% ()
% \[\int_D \|\nabla(\phi \circ \|\]
% \vspace{5cm}

\subsection{The Dirichlet space}
\label{subsec:Dirichlet}
 % For each function $f:V\to \R$, let $Af$ denote the affine extension of $\phi$ to $\D$. 
% Let $\bD(G)$ and $\HD(G)$ be the Hilbert spaces of Dirichlet and harmonic Dirichlet functions on $M$, and let $\bD(\D)$ and $\HD(\D)$ denote the Hilbert spaces of the Dirichlet and harmonic Dirichlet functions on $\D$. 

We begin by reviewing the definitions of the Dirichlet spaces in both the discrete and continuous cases, as well as some of their basic properties. For further background, we refer the reader to \cite{LP:book,Soardibook} in the discrete case, and \cite{AnLyPe99} and references therein for the continuous case.

Recall that a \textbf{network} is a graph $G=(V,E)$ (which in this paper will always be locally finite and connected) together with an assignment $c:E\to(0,\infty)$ of positive \textbf{conductances} to the edges of $G$. The random walk on a locally finite network is the Markov process that, at each step, chooses an edge to traverse from among those edges emanating from its current position, where the probability of choosing a particular edge is proportional to its conductance.  
Let $G=(V,E)$ be a network, and let $E^\rightarrow$ be the set of oriented edges of $G$. The \textbf{Dirichlet energy} of a function
% \footnote{We will use the convention of using lower-case letters for functions on a graph and upper-case letters for functions on a domain.} 
$\phi: V\to \R$ is defined to be
\[\sE(\phi) = \frac{1}{2}\sum_{e\in E^\rightarrow} c(e) \left(\phi\left(e^-\right)-\phi\left(e^+\right)\right)^2.\]
We say that $\phi$ is a \textbf{Dirichlet function} (or equivalently that $\phi$ has \textbf{finite energy}) if $\sE(\phi)<\infty$. 
 The space of Dirichlet functions on $G$ and the space of harmonic Dirichlet functions on $G$ are denoted by $\bD(G)$ and $\HD(G)$ respectively. These spaces are both Hilbert spaces with respect to the inner product  
 \begin{equation}
 \label{eq:innerproductdefdisc}
\langle \phi,\psi \rangle =
 % = \phi(o)\psi(o)+ 
 % \cE(\phi,\psi) = 
\phi(o)\psi(o)+\frac{1}{2}\sum_{e\in E^\rightarrow} c(e) \left[\phi\left(e^-\right)-\phi\left(e^+\right)\right]\left[\psi\left(e^-\right)-\psi\left(e^+\right)\right],
 \end{equation}
 where $o$ is a fixed root vertex. (It is easily seen that different choices of $o$ yield equivalent norms.)
 % becoming Hilbert spaces $\bD(G)/\R$ and $\HD(G)/\R$ if one quotients out by the constant functions. 
We denote the space of \emph{bounded Dirichlet functions} by $\BD(G)$ and the space of \emph{bounded harmonic Dirichlet functions} by $\BHD(G)$. These spaces are dense in $\bD(G)$ and $\HD(G)$ respectively, see \cite[Page 314]{LP:book} and \cite[Theorem 3.73]{Soardibook}. 

% Let $M$ be a transient network and 
Let $\bD_0(G)$ be the closure in $\bD(G)$ of the space of finitely supported functions. 
% Now suppose that $G$ is transient, and let 
If $G$ is transient,
% \footnote{In recurrent networks the constant functions are in both $\bD_0(G)$ and $\HD(G)$ and so the Royden decomposition is not unique.}
% , 
then every Dirichlet function $\phi \in \bD(G)$ has a unique decomposition 
\begin{equation}
\label{eq:Royden}
\phi = \phi_{\bD_0} + \phi_{\HD}
\end{equation}
where $\phi_{\bD_0}\in \bD_0(G)$ and $\phi_{\HD} \in \HD(G)$, known as the \textbf{Royden decomposition} of $\phi$ \cite[Theorem 3.69]{Soardibook}. In other words, $\bD(G)=\bD_0(G)\oplus \HD(G)$. (Note that this is \emph{not} necessarily an orthogonal decomposition, although $\bD_0(G)$ and $\HD(G)$ are orthogonal with respect to the Euclidean seminorm $\cE$, see \cite[Lemma 3.66]{Soardibook}.)  Let $\langle X_n \rangle_{n\geq0}$ be a random walk on $G$. It is a theorem of Ancona, Lyons, and Peres \cite{AnLyPe99}, which complements earlier results of Yamasaki \cite{yamasaki1986ideal}, that the limit
% \[
$\lim_{n\to\infty} \phi(X_n)$
% \]
exists almost surely for each $\phi \in \bD(G)$, that
\begin{equation}
\label{eq:ALPlimitdisc}
\lim_{n\to\infty} \phi(X_n) = \lim_{n\to\infty} \phi_{\HD}(X_n)
\end{equation}
almost surely, and moreover that
 $\phi_{\HD}$ can be expressed as
% \begin{equation}
% \label{eq:HDdisc}
\begin{equation}
\label{eq:ALPlimitidentification}
\phi_{\HD}(v) = \bE_v\left[ \lim_{n\to\infty} \phi(X_n) \right],
\end{equation}
% \end{equation}
where $\bE_v$ denotes the expectation with respect to the random walk $\langle X_n \rangle_{n\geq0}$ started at $v$. See also \cite[Theorem 9.11]{LP:book}. [The referee has informed us that the almost sure existence of the limit $\lim_{n\to\infty}\phi(X_n)$ was in fact originally proven by Silverstein in 1974 \cite{MR0350876}, independently of Ancona, Lyons, and Peres.]

% \medskip

A similar theory holds in the continuum. If $D \subseteq \C$ is a domain, the \textbf{Dirichlet energy} of a locally $L^2$, weakly differentiable\footnote{Recall that a function or vector field $\Phi:D\to \R^d$, $d\geq 1$, is said to be \textbf{locally integrable} if $\int_A \|\Phi(z)\|\dif z<\infty$ for every precompact open subset $A$ of $D$, and \textbf{locally $L^2$} if $\int_A \|\Phi(z)\|^2\dif z<\infty$ for every precompact open subset $A$ of $D$. 
A locally integrable vector field $W:D\to \R^2$ is said to be a \textbf{weak gradient} of the locally integrable function $\Phi : D\to \R$ if the identity
$\int_D \Psi W \dif z = -\int_D \Phi \nabla \Psi \dif z$ holds for every smooth, compactly supported function $\Psi$ on $D$. We say that a locally integrable function  $\Phi:D\to \R$ is \textbf{weakly differentiable} if it admits a weak gradient. The weak gradient of a locally integrable, weakly differentiable $\Phi:D\to \R^2$ is unique up to almost-everywhere equivalence, and is denoted by $\nabla \Phi$.
The weak gradient coincides with the usual gradient of $\Phi$ at $z$ if $\Phi$ is differentiable on an open neighbourhood of $z$.
% , since in this case the integration by parts formula implies that the identity $\int_D \Psi \nabla \Phi \dif z = -\int_D \Phi \nabla \Psi \dif z$ holds for every smooth, compactly supported function $\Psi$ on $D$.
} 
function $\Phi: D \to \R$ on $D$ is defined to be
\[\sE(\Phi) = \int_D \|\nabla \Phi (z)\|^2 dz.\]
% where the integral is taken with respect to Lebesgue measure on the plane. 
As in the discrete case, we say that $\Phi$ is a \textbf{Dirichlet function} (or equivalently that $\Phi$ has \textbf{finite energy}) if it is locally $L^2$, weakly differentiable, and satisfies $\sE(\Phi)<\infty$. We let $\bD(D)$, and  $\HD(D)$ be the spaces of Dirichlet functions (modulo almost everywhere equivalence) and harmonic Dirichlet functions respectively.
The spaces $\bD(D)$ and $\HD(D)$ are Hilbert spaces with respect to the inner product
\begin{equation}
\label{eq:innerproductdefcont}
\langle \Phi,\Psi \rangle 
% = \int_{O} \Phi(z) \Psi(z) \dif z + 
% \cE(\Phi,\Psi)
= \int_{O} \Phi(z) \Psi(z) \dif z + \int_D \nabla \Phi(z) \cdot \nabla \Psi(z) \dif z,
\end{equation}
where $O$ is a fixed precompact open subset of $D$. (The Poincar\'e inequality implies that different choices of $O$ yield equivalent norms. 
In particular, convergence in this norm implies local $L^2$ convergence.)
The spaces 
$\bD(D)$ and $\HD(D)$ contain the spaces of \emph{bounded Dirichlet functions} $\BD(D)$ and of \emph{bounded harmonic Dirichlet functions} $\BHD(D)$ as dense subspaces respectively \cite[Proposition 16]{MR0049396}.

Let $\bD_0(D)$ be the closure in $\bD(D)$ of the space of compactly supported Dirichlet functions. 
As in the discrete case, if $D$ is a transient for Brownian motion, then every $\Phi \in \bD(D)$ has a unique Royden decomposition $\Phi=\Phi_{\bD_0}+\Phi_{\HD}$ where $\Phi_{\bD_0} \in \bD_0(D)$ 
 and $\Phi_{\HD}\in \HD(D)$ \cite{MR0049396}.  
Let $\langle B_t \rangle_{t=0}^{T_{\partial D}}$  be a Brownian motion stopped at the first time it hits $\partial D$, denoted $T_{\partial D}$.
 Anconca, Lyons, and Peres \cite{AnLyPe99} proved that if $\Phi \in \bD(D)$, then the limit $\lim_{t\uparrow T_{\partial D}} \Phi(B_t)$ exists almost surely\footnote{Strictly speaking, since $\Phi$ is only defined up to almost everywhere equivalence, we choose a \emph{quasi-continuous version} of $\Phi$ before applying it to the Brownian motion $B_t$. This ensures that $\Phi(B_t)$ is well-defined and continuous in $t$ almost surely. See \cite{AnLyPe99} for details.}, that 
% \[
\begin{equation}
\label{eq:ALPcontinuouslimit}
\lim_{t\uparrow T_{\partial D}}\Phi(B_t) = \lim_{t\uparrow T_{\partial D}}\Phi_{\HD}(B_t)
\end{equation}
almost surely, and that
% \begin{equation}
% \label{eq:HDcont}
\begin{equation}
\label{eq:ALPcontinuousidentification}
\Phi_{\HD}(z) = \E_z\left[\lim_{t\uparrow T_{\partial D} } \Phi(B_t) \right]\end{equation}
 % \quad \text{ for every $z\in D$,}
% \end{equation}
for every $z\in D$, 
where $\E_z$ denotes the expectation with respect to the Brownian motion $\langle B_t \rangle_{t=0}^{T_{\partial D}}$ started at $z$. The almost sure existence of the limit $\lim_{t\uparrow T_{\partial D}} \Phi(B_t)$ also follows from the earlier work of Doob \cite{MR0109961,MR0173783}.

% \begin{theorem}[Isomorphism on the disc]
% \label{thm:isomorphismdisc}
% Let $M$ be a weighted polyhedral planar map with bounded local geometry, and let $(P,P^\dagger)$ be a double circle packing of $M$ in the unit disc $\D$, and let $z$ be the associated embedding of $M$. Then the following hold:
% \begin{enumerate}
% 	\item For every bounded harmonic Dirichlet function $h$ on $M$, there exists a unique harmonic Dirichlet function $\mathsf{Cont}[h]$ on $\D$ such that
% 	\[ \lim_{v\to\infty} \left|h(v) - \mathsf{Cont}[h]\circ z (v)\right|=0.\]
% 	\item For every bounded harmonic Dirichlet function $h$ on $\D$, there exists a unique harmonic Dirichlet function $\mathsf{Disc}[h]$ on $M$ such that
% 	\[ \lim_{v\to\infty} \left|\mathsf{Disc}[h](v) - h\circ z (v)\right|=0.\]
% 	\item The functions $\mathsf{Cont}:\BHD(G)\to \HD(\D)$ and $\mathsf{Disc}:\BHD(\D)\to\HD(G)$, defined in items $1$ and $2$, can be uniquely extended to bounded linear operators
% 	\begin{align*}
% 	\mathsf{Cont}&: \HD(G) \longrightarrow \HD(D)\\
% 	\mathsf{Disc}&: \HD(D) \longrightarrow \HD(G),
% 	\end{align*}
% 	% are mutually inverse bounded linear operators.
% 	and these operators are inverses of each other.
% \end{enumerate}
% \end{theorem}
% Here, $\lim_{v\to\infty}f(v)\to0$ means that for every $\eps>0$, the set $\{v\in V: |f(v)|\geq \eps\}$ is finite.

% To state our result for general domains, we need to recall the notion of capacity. 

\subsection{Planar maps and double circle packing}
\label{subsec:mapsdcp}

Let us briefly recall the definitions of planar maps; see e.g.\ \cite{LZ,miermont2014aspects,unimodular2} for detailed definitions.  
Recall that a (locally finite) \textbf{map} $M$ is a connected, locally finite graph $G$ together with an equivalence class of proper embeddings of $G$ into orientable surfaces, where two such embeddings are equivalent if there is an orientation preserving homeomorphism between the two surfaces sending one embedding to the other. Equivalently, maps can be defined combinatorially as graphs equipped  with cyclic orderings of the oriented edges emanating from each vertex, see \cite{LZ} or \cite[Section 2.1]{unimodular2}. We call a graph endowed with \emph{both} a map structure and a network structure (i.e., specified conductances) a \textbf{weighted map}. A map is \textbf{planar} if the surface is homeomorphic to an open subset of the sphere, and is \textbf{simply connected} if the surface is simply connected, that is, homeomorphic to either the sphere or the plane. 

Given a specified embedding of a map $M$, the \textbf{faces} of $M$ are defined to be the connected components of the complement of the embedding. We write $F$ for the set of faces of $M$, and write $f\perp v$ if the face $f$ is incident to the vertex $v$. Given an oriented edge $e$ of $M$, we write $e^\ell$ for the face to the left of $e$ and $e^r$ for the face to the right of $E$.  Every map $M$ has a \textbf{dual} map $M^\dagger$ that has the faces of $M$ as vertices, the vertices of $M$ as faces, and for each oriented edge $e$ of $M$, $M^\dagger$ has an oriented edge $e^\dagger$ from $e^\ell$ to $e^r$. The definitions of $F$ and $M^\dagger$ are  independent of the choice of embedding of $M$, as different embeddings give rise to face sets that are in canonical bijection with each other and dual maps that are canonically isomorphic to each other. It is also  possible to define $F$ and $M^\dagger$ entirely combinatorially, see \cite{LZ} or \cite[Section 2.1]{unimodular2} for details.

The \textbf{carrier} of a circle packing $P$, $\operatorname{carr}(P)$,  is defined to be union of the discs in $P$ together with the components of $\C\cup\{\infty\} \setminus \bigcup P$ whose boundaries are contained in a union of finitely many discs in $P$.  
Note that every circle packing $P$ in the Riemann sphere whose tangency graph is locally finite also defines a locally finite \textbf{tangency map}, where we embed the tangency graph into the carrier of $P$ by drawing straight lines between the centres of tangent circles. 

 \begin{figure}[t]
\centering
\includegraphics[height=0.32\textwidth]{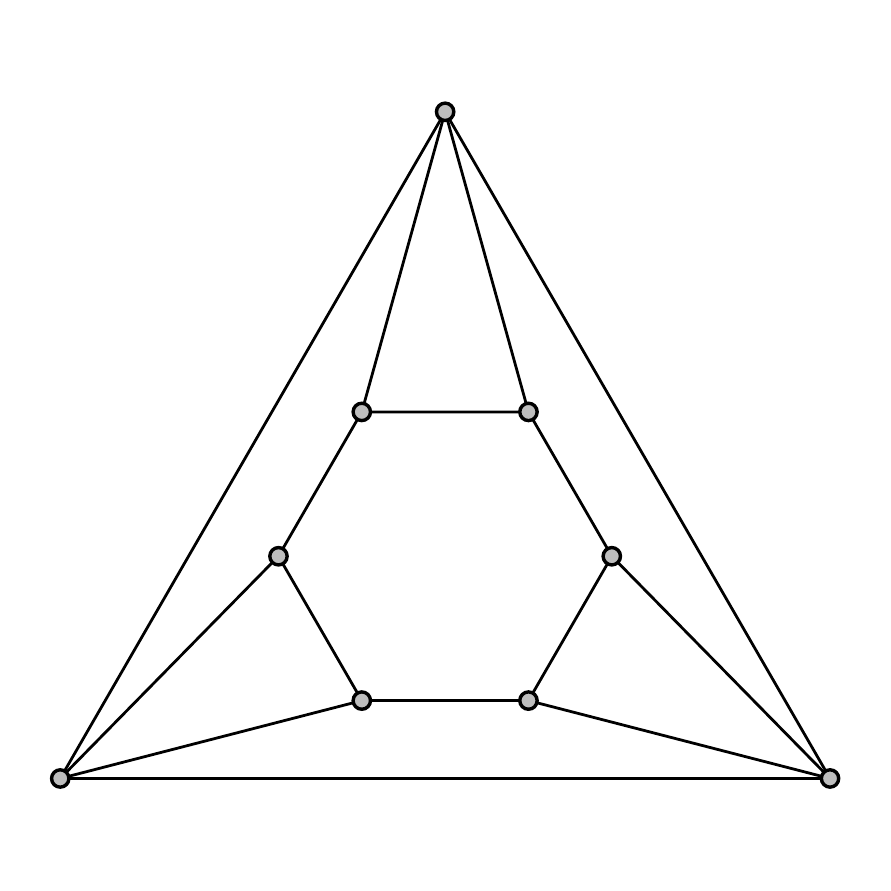} \hspace{1.1cm} \includegraphics[height=0.32\textwidth]{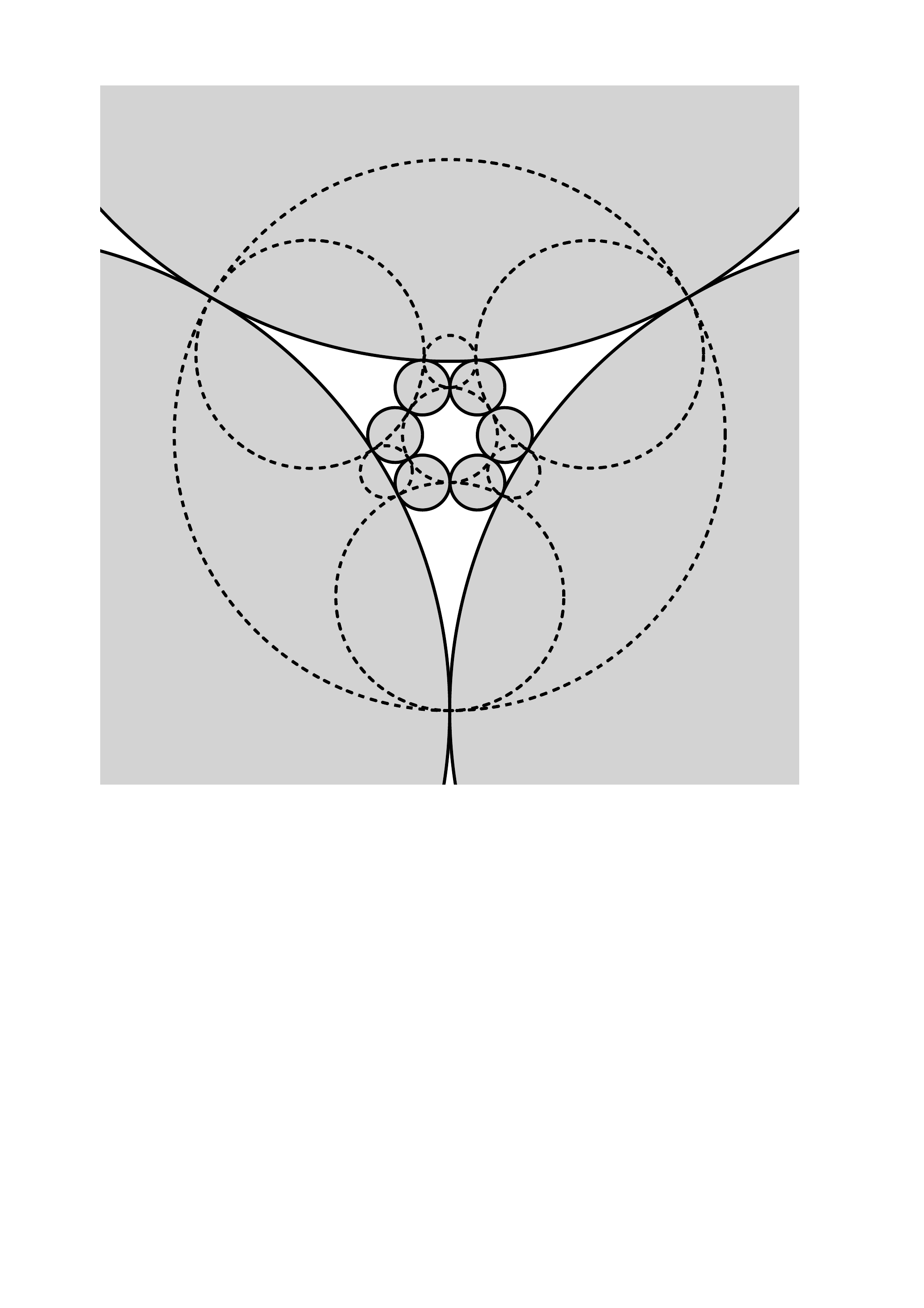}
\caption{
% \small{
{
A finite polyhedral planar map (left) and its double circle packing (right). Primal circles are filled and have solid boundaries, dual circles have dashed boundaries.}
}
\label{fig.dcp}
\end{figure} 

Let $M$ be a locally finite map with locally finite dual $M^\dagger$. A \textbf{double circle packing} of $M$ is a pair of circle packings $(P,P^\dagger)$ in the Riemann sphere such that the following conditions hold (see \cref{fig.dcp}).
\begin{enumerate}[leftmargin=*]
\item $M$ is the tangency map of $P=\{P(v) : v \in V\}$ and $M^\dagger$ is the tangency map of $P^\dagger=\{P^\dagger(f) : f \in F\}$. 
\item If $v$ is a vertex of $M$ and $f$ is a face of $M$, then the discs $P(v)$ and $P^\dagger(f)$ intersect if and only if $v$ is incident to $f$, and in this case their boundaries intersect orthogonally.
\end{enumerate} 
Observe that if $(P,P^\dagger)$ is a double circle packing of a locally finite map with locally finite dual then $\operatorname{carr}(P)=\operatorname{carr}(P^\dagger)=\bigcup P \cup \bigcup P^\dagger$.
It follows from Thurston's interpretation \cite{Th78,marden1990thurston} of Andreev's theorem \cite{andreev1970convex} that a finite planar map has a double circle packing in the Riemann sphere if and only if it is \textbf{polyhedral}, that is, simple and $3$-connected. The corresponding infinite theory\footnote{He worked in a more general setting, see \cite[Section 2.5]{HutNach15b} for a discussion of how his results imply those claimed here.} was developed by He \cite{he1999rigidity}, who proved that every simply connected, locally finite, polyhedral map $M$ with locally finite dual admits a double circle packing in either the plane or the disc, and that this packing is unique up to M\"obius transformations. (Note that reflections are no longer needed now that we are considering maps instead of graphs.)  See \cite{HS93} for a related uniformization theorem for \emph{countably-connected} triangulations. Without any topological assumptions, we still have by an easy compactness argument that every locally finite polyhedral planar map with locally finite dual admits a double circle packing in \emph{some} domain, although possibly a very wild one. 

\subsection{The isomorphism}

We are now ready to describe our isomorphism theorem in its full generality. We say that a weighted map (or more generally a network)  has \textbf{bounded local geometry} if it has bounded degree and the conductances of its edges are bounded between two positive constants. We say that a map has \textbf{bounded codegree} if its dual has bounded degree. 

% Recall that the \textbf{capacity} (or \textbf{conductance to infinity}) of a set $A$ in a transient network $M$ is defined to be
% \[ 
% \Cap(A) = \sum_{v\in A} c(v) \bP_v(\tau^+_A=\infty) = \inf\left\{\cE(\phi): \phi: V\to\R \text{ is finitely supported, } \phi|_A \geq 1\right\}.
% \]
% If $A$ is infinite then we define $\Cap(A)=\sup\{ \Cap(A'): A' \text{ a finite subset of } A\}$.

\begin{theorem}[The isomorphism]
 % on general domains]
\label{thm:isomorphismgeneral}
Let $M$ be a transient weighted polyhedral planar map with bounded codegrees and bounded local geometry, let $(P,P^\dagger)$ be a double circle packing of $M$ in a domain $D \subset \C \cup \{\infty\}$, and let $z:V\to D$ be the function sending each vertex $v$ to the centre of the corresponding disc $P(v)$.
  Then the following hold:
\begin{enumerate}
	\item For every harmonic Dirichlet function $h \in \HD(M)$, there exists a unique harmonic Dirichlet function $H \in \HD(D)$  such that $h-H\circ z \in \bD_0(M)$. 
	% \begin{equation}
	% \label{eq:ThmCapCont}
	% \Cap\left(\left\{v \in V: \left|h(v)-H\circ z(v)\right| \geq \eps\right\} \right) <\infty \quad \text{ for every $\eps>0$.}
	% % \lim_{v\to\infty} \left|h(v) - \mathsf{C}h\circ z (v)\right|=0.
	% \end{equation}
	We denote this function $H$ by $\Cont[h]$.
	% for every $\eps>0$.
	\item For every harmonic Dirichlet function $H \in \HD(D)$, there exists a unique harmonic Dirichlet function $h \in \HD(M)$ such that $h-H \circ z \in \bD_0(M)$. 
	% \[ \lim_{v\to\infty} \left|\mathsf{Disc} h(v) - h\circ z (v)\right|=0.\]
	% \begin{equation}
	% \label{eq:ThmCapDisc}
	% \Cap\left(\left\{v \in V: \left|H\circ z(v)-h(v)\right| \geq \eps\right\} \right) <\infty \quad \text{ for every $\eps>0$.}
	% \end{equation}
	We denote this function $h$ by $\Disc[H]$.
\end{enumerate}
	% \item The functions $\mathsf{Cont}:\HD(G)\to \HD(D)$ and $\mathsf{Disc}:\HD(D)\to\HD(G)$, defined in items $1$ and $2$, are mutually inverse bounded linear operators.
	% \item
	 Moreover, the functions $\mathsf{Cont}:\HD(M)\to \HD(D)$ and $\mathsf{Disc}:\HD(D)\to\HD(M)$ are bounded linear operators, 
	% \begin{align*}
	% \mathsf{Cont}&: \HD(G) \longrightarrow \HD(D)\\
	% \mathsf{Disc}&: \HD(D) \longrightarrow \HD(G),
	% \end{align*}
	% are mutually inverse bounded linear operators.
	and these operators are inverses of each other.
% \end{enumerate}
\end{theorem}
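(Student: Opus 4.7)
The plan is to build $\Cont$ and $\Disc$ out of the Royden decompositions \eqref{eq:Royden} of two natural transfer operators. Using the double circle packing $(P,P^\dagger)$, I would introduce an extension operator $\mathsf{Ext}:\bD(M)\to\bD(D)$ sending $h$ to its piecewise affine interpolation on the natural triangulation of $D=\operatorname{carr}(P)$ obtained from the straight-line drawings of $M$ and $M^\dagger$, so that $\mathsf{Ext}[h](z(v))=h(v)$ for every $v\in V$; dually, the restriction operator $z^*:\bD(D)\to\bD(M)$ is given by $(z^*H)(v):=H(z(v))$, choosing a quasi-continuous representative when needed (for $H\in\HD(D)$ the function is smooth and no choice is needed). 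Assuming both operators are bounded, I then define
\[
\Cont[h]:=(\mathsf{Ext}[h])_{\HD}\in\HD(D),\qquad \Disc[H]:=(z^*H)_{\HD}\in\HD(M).
\]

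The main technical input is a pair of energy comparison estimates. \emph{Estimate (A):} $\sE_M(z^*H)\leq C_1 \sE_D(H)$. For each oriented edge $e=(u,v)$, a Poincar\'e-type bound controls $[H(z(u))-H(z(v))]^2$ by $|z(u)-z(v)|$ times the integral of $|\nabla H|^2$ over a tubular neighbourhood $N_e\subset D$ of $\overline{z(u)z(v)}$, of width comparable to the minimum of the radii of $P(u)$ and $P(v)$. Bounded local geometry keeps conductances, edge lengths, and tube widths in balanced proportion and makes the $\{N_e\}$ have bounded overlap, so summing yields (A). \emph{Estimate (B):} $\sE_D(\mathsf{Ext}[h])\leq C_2 \sE_M(h)$. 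Here the Rodin--Sullivan-type ring lemma together with bounded codegree controls each interpolation triangle's aspect ratio and size by the radii of incident primal and dual discs, so the gradient of the affine extension on each triangle is dominated by nearby edge differences of $h$; summing recovers $\sE_M(h)$ up to a constant.

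With (A) and (B) in hand, both operators are bounded linear. I next verify the subspace inclusions $\mathsf{Ext}(\bD_0(M))\subseteq \bD_0(D)$ (finitely supported $h$ yield compactly supported $\mathsf{Ext}[h]$) and $z^*(\bD_0(D))\subseteq \bD_0(M)$ (compactly supported $\Psi$ restrict to finitely supported $z^*\Psi$ since $z$ is proper by local finiteness of $P$, and density closes the subspace using (A)). Using the identity $\mathsf{Ext}[h]\circ z=h$, this gives
\[
h-\Cont[h]\circ z=\bigl(\mathsf{Ext}[h]-\Cont[h]\bigr)\circ z=(\mathsf{Ext}[h])_{\bD_0}\circ z\in\bD_0(M),
\]
which is the existence half of (1); the existence half of (2) is $z^*H-\Disc[H]=(z^*H)_{\bD_0}\in\bD_0(M)$ by construction. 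The same calculation shows $\Disc\circ\Cont=\operatorname{id}_{\HD(M)}$ and $\Cont\circ\Disc=\operatorname{id}_{\HD(D)}$ provided we know the following \emph{kernel lemma}: if $G\in\HD(D)$ and $z^*G\in\bD_0(M)$, then $G\equiv 0$ (equivalently, the uniqueness halves of (1) and (2)).

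I expect the kernel lemma to be the main obstacle. Given $G\in\HD(D)$ with $z^*G\in\bD_0(M)$, the discrete Ancona--Lyons--Peres theorem \eqref{eq:ALPlimitdisc}--\eqref{eq:ALPlimitidentification} forces $\lim_{n\to\infty}G(z(X_n))=0$ almost surely for the random walk $\langle X_n\rangle$ on $M$, while the continuous Ancona--Lyons--Peres identification \eqref{eq:ALPcontinuousidentification} writes $G(z)$ as the Brownian expectation of $\lim_{t\uparrow T_{\partial D}}G(B_t)$. To reconcile these, I would invoke a comparison between the $z$-embedded random walk on $M$ and Brownian motion on $D$ along the lines of \cite{GGNS15} and \cite{ABGN14}: both processes exit $D$ with mutually absolutely continuous hitting distributions on $\partial D$, so the discrete limit being $0$ a.s.\ forces the continuous limit to vanish a.s., and \eqref{eq:ALPcontinuousidentification} then gives $G\equiv 0$. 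The delicate point is making this random-walk/Brownian-motion comparison precise in the generality of polyhedral maps with bounded codegree, rather than only for triangulations circle packed in the disc.
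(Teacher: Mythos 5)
Your overall architecture --- two bounded transfer operators (piecewise affine extension and restriction along $z$), composition with the Royden projections, and reduction of everything to a single ``kernel lemma'' --- is exactly the paper's, and your energy estimates (A) and (B) are the content of its \cref{lem:AandRenergy}. (One technical remark: pointwise restriction $z^*\Phi=\Phi\circ z$ is not well defined for general $\Phi\in\bD(D)$, since single points have zero capacity in the plane and quasi-continuous representatives are only determined off polar sets; the paper instead averages $\Phi$ over the small discs $P_{\delta_0}(v)$, which agrees with $\Phi\circ z$ when $\Phi$ is harmonic and costs only the extra, easy fact that $\phi-\mathsf{R}[\mathsf{A}[\phi]]\in\bD_0(M)$.)

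The genuine gap is your proposed proof of the kernel lemma. You want to pass from ``$\lim_n G(z(X_n))=0$ a.s.\ for the random walk'' to ``$\lim_{t\uparrow T_{\partial D}}G(B_t)=0$ a.s.\ for Brownian motion'' by asserting that the exit distributions of the embedded walk and of Brownian motion are mutually absolutely continuous. That assertion is not available and is believed false in general: the introduction of the paper explicitly notes that the walk's harmonic measure on the boundary ``can be singular with respect to the corresponding measure for Brownian motion,'' which is precisely why the bounded-harmonic-function representation of \cite{ABGN14} is weaker than \cref{thm:isomorphismdisc}. Even granting absolute continuity, one would still need to argue that the a.s.\ limit of $G$ along walk trajectories and the a.s.\ limit along Brownian trajectories define the same boundary function. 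The paper's actual argument avoids harmonic measure entirely and is purely potential-theoretic: membership in $\bD_0$ is characterized by finiteness of the capacities of the superlevel sets $\{|G|\geq\eps\}$ (\cref{lem:D0char}); discrete and continuous capacities are compared via the operators $\mathsf{A}$ and $\mathsf{R}$ (\cref{lem:CapComparison}); and the continuous superlevel set of a harmonic Dirichlet function is shown --- using the maximum principle, the Ring Lemma, and the gradient estimate of \cref{lem:continuity} --- to be covered, up to a compact set, by the ``flowers'' of vertices lying in a discrete superlevel set of $G\circ z$. You should replace the harmonic-measure comparison by an argument of this type.
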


Note that even in the simply connected case there are many choices of domain $D$ and double circle packing $(P,P^\dagger)$ for any given map $M$, and the theorem should be understood as giving us an isomorphism for each such choice of $D$ and $(P,P^\dagger)$.

There are several ways to characterize the space $\bD_0(G)$, leading to several alternative characterisations of the functions $\Cont[h]$ and $\Cont[H]$. In particular, the following hold under the assumptions of \cref{thm:isomorphismgeneral}:
\begin{itemize}
\item For each $h \in \HD(M)$, $H=\Cont[h]$ is the unique harmonic Dirichlet function on $D$ such that 
\begin{equation}
\label{eq:othercharswalk}
\lim_{n\to\infty} \big|h(X_n) - H \circ z(X_n)\big|=0 
\end{equation}
almost surely when $\langle X_n \rangle_{n \geq0}$ is a random walk on $G$. Similarly, for each $H\in \HD(D)$, $h=\Disc[H]$ is the unique harmonic Dirichlet function on $M$ such that \eqref{eq:othercharswalk} holds almost surely. Given \cref{thm:isomorphismgeneral}, both statements are implied by \eqref{eq:ALPlimitdisc}. 
\item
For each $h\in \HD(M)$, 
$H=\Cont[h]$ is the unique harmonic Dirichlet function on $D$ such that $h$ and $H\circ z$ are \textbf{quasi-asymptotically equal}, meaning that
\begin{equation}
\label{eq:othercharscap} 
\Cap\big(\big\{v\in V : |h(v)-H\circ z(v)| \geq \eps\big\}\big)<\infty
\end{equation}
 for every $\eps>0$. 
See \cref{subsec:capacity} for the definition of capacity. 
 Similarly, for each $H\in \HD(D)$, $h=\Disc[H]$ is the unique harmonic Dirichlet function on $M$ such that $h$ is quasi-asymptotically equal to $H \circ z$. Given Theorem \ref{thm:isomorphismgeneral}, both statements are implied by Proposition \ref{lem:D0char}. 
\end{itemize}

We can get the stronger characterisation of $\Cont$ and $\Disc$ in terms of asymptotic equality if we make additional assumptions on the domain. 
We say that a domain $D$ is \textbf{uniformly transient} if
\[
\inf_{z\in D} \Cap\Big(B\big(z,\eps d(z,\partial D)\big)\Big)>0
\]
for every $\eps>0$. 
For example, the unit disc is uniformly transient, as is any finitely connected domain none of whose complementary components are points. 
\begin{itemize}
	\item \emph{If $D$ is uniformly transient}, then for each \emph{bounded} $h\in \BHD(M)$, $H=\Cont[h]$ is the unique harmonic Dirichlet function on $D$ such that $h$ and $H\circ z$ are asymptotically equal. 
 Similarly, for each \emph{bounded} $H\in \BHD(D)$, $h=\Disc[H]$ is the unique harmonic Dirichlet function on $M$ such that $h$ is asymptotically equal to $H \circ z$. As we will see, given \cref{thm:isomorphismgeneral}, both statements are implied by Proposition \ref{prop:uniformlytransient}, and yield \cref{thm:isomorphismdisc} as a special case.
\end{itemize}
Note that the weighted map $M$ is \emph{not} required to be uniformly transient.

\subsection{Related work and an alternative proof}

A related result concerning linear isomorphisms between harmonic Dirichlet spaces induced by rough isometries between bounded degree graphs was shown by Soardi \cite{Soardi93}, who proved that if $G_1$ and $G_2$ are bounded degree, rough isometric graphs, then $G_1$ admits non-constant harmonic Dirichlet functions if and only if $G_2$ does. See e.g.\ \cite{Soardibook,LP:book} for definitions of and background on rough isometries. Soardi's result was subsequently generalized by Holopainen and Soardi \cite{holopainen1997p} to rough isometries between bounded degree graphs and a certain class of Riemannian manifolds. This result was then strengthened by Lee \cite{lee1999rough}, who showed that the dimension of the space of harmonic Dirichlet functions is preserved under rough isometry.

 By a small improvement on the methods in the works mentioned (or, alternatively, using the methods of this paper), it is not difficult to show the stronger result that for each rough isometry $\rho : G_1 \to G_2$, we have that $h \mapsto (h \circ \rho)_{\HD}$ is a bounded linear isomorphism $\HD(G_2)\to\HD(G_1)$. 
 Similar statements hold for rough isometries between graphs and manifolds  and between two manifolds (under appropriate assumptions on the geometry in both cases). Indeed, in the discrete case the fact that $h \mapsto (h \circ \rho)_{\HD}$ is a bounded linear isomorphism can easily be read off from the proof of Soardi's result presented in \cite{LP:book}. 

 Another setting in which one very easily obtains an isomorphism between harmonic Dirichlet spaces is given by quasi-conformal mapping between domains (or other Riemannian manifolds). Recall that a homeomorphism $q:D\to D'$ is said to be \textbf{quasi-conformal} if it is orientation preserving, weakly differentiable, and there exists a constant $C$ such that
 \[\|D q(z)\|^2 \leq C \,|\!\det \left[D q(z)\right]\!| \]
 for a.e.\ $z\in D$. It is trivial to verify by change of variables that $\cE(\phi \circ q) \leq C \cE(\phi)$ for every $\phi \in \bD(D)$ and $\cE(\psi \circ q^{-1}) \leq C \cE(\psi)$ for every $\psi \in \bD(D')$, so that composition with $q$ defines a bounded linear isomorphism from $\bD(D')$ to $\bD(D)$. Moreover, it is immediate that $\psi \circ q \in \bD_0(D)$ if and only if $\psi \in \bD_0(D')$, and it follows that $H \mapsto (H\circ q)_{\HD}$ is a bounded linear isomorphism from $\HD(D')$ to $\HD(D)$.

Using these ideas, one could obtain an alternative, less direct proof of \cref{thm:isomorphismgeneral}, sketched as follows: First, let $S$ be the `piecewise flat' surface obtained by gluing regular polygons according to the combinatorics of the map $M$, which is Riemannian apart from having conical singularities at its vertices.  The assumption that $M$ has bounded degrees and codegrees readily implies that the function $i$ sending each vertex of $M$ to the corresponding point of $S$ is a rough isometry. One can then show that $H \mapsto (h \circ i)_{\HD}$ is a bounded linear isomorphism $\HD(S)\to\HD(M)$, similar to the above discussion. Next, the Ring Lemma easily allows us to construct, face-by-face, a quasi-conformal map $q:S\to D$ such that $q\circ i = z$. One can then arrive at \cref{thm:isomorphismgeneral} by composing the isomorphism $\HD(S)\to\HD(M)$, $H \mapsto (H \circ i)_{\HD}$  and the isomorphism $\HD(D)\to\HD(S)$, $H \mapsto (H \circ q)_{\HD}$.

\section{Proof}

\subsection{Capacity characterisation of $\bD_0$}
\label{subsec:capacity}

Recall that the \textbf{capacity} of a finite set of vertices $A$ in a network $G$ is defined to be 
\[\Cap(A)=\sum_{v\in A} c(v) \bP_v(\tau^+_A =\infty),\]
where $\bP_v(\tau^+_A=\infty)$ is the probability that a random walk on $G$ started at $A$ never returns to $A$ after time zero and $c(v)=\sum_{e\in E^\rightarrow : e^- =v} c(e)$ is the total conductance of all oriented edges emanating from the vertex $v$. The capacity of an infinite set $A$ is defined to be $\Cap(A)=\sup\{\Cap(A') : A' \subseteq A \text{ finite}\}.$ 
Another way to compute capacities is via 
\textbf{Dirichlet's principle}, which gives the following variational formula for the capacity of a (finite or infinite) set $A$ in a network $G$ (see e.g.\ \cite[Chapter 2]{LP:book}):
\[
\Cap(A)=\inf\left\{\cE(\phi): \phi \in \bD_0(G),\, \phi|_A \geq 1 \right\},
\]
where we set $\inf \emptyset = \infty$. (For example, if $G=(V,E)$ is transient then $\Cap(V)=\infty$ and the set $\{\phi \in \bD_0(G),\, \phi|_V \geq 1\}$ is empty.)
 % If $A$ is finite, then we can equivalently define $\Cap(A)$ to be the infimal energy of a \emph{finitely supported} $\phi \in \bD_0(G)$ with $\psi|_A\geq 1$. 
A similar formula can also be taken as the definition of the capacity of a set $A$ in a domain $D$ (see e.g.\ \cite{AnLyPe99}):
\[
\Cap(A):= \inf\big\{\cE(\Phi): \Phi \in \bD_0(D),\,  \Phi \geq 1 \text{ a.e.\ on an open  neighbourhood of $A$}\big\}.
\]
A network is transient if and only if some (and hence every) finite set of its vertices has positive capacity, and a 
domain is transient if and only if some (and hence every) precompact open subset of it has positive capacity.

The following characterisation of $\bD_0$ is presumably well-known to experts.

\begin{prop}
\label{lem:D0char}
\hspace{1cm}
\begin{enumerate}
	\item
Let $G$ be a network and let $\phi \in \bD(G)$. Then $\phi \in \bD_0(G)$ if and only if it is quasi-asymptotically equal to the zero function, that is, if and only if
\[
  \Cap\left(\{v\in V : |\phi(v)| \geq \eps \}\right)<\infty
\]
for every $\eps>0$.
\item
Let $D$ be a domain and let $\Phi \in \bD(D)$. Then $\Phi \in \bD_0(D)$ if and only if it is quasi-asymptotically equal to the zero function, that is, if and only if
\[
  \Cap\big(\{z\in D : |\Phi(z)| \geq \eps \text{ a.e.\ on an open neighbourhood of $z$} \}\big)<\infty.
\]
for every $\eps>0$.
\end{enumerate}
\end{prop}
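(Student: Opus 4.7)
The plan is to prove each direction using Dirichlet's principle for the easy direction and the Ancona--Lyons--Peres identification formula \eqref{eq:ALPlimitidentification} together with the Royden decomposition for the harder direction. The discrete and continuous parts follow the same template, so I will describe the discrete case first.

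For the forward direction, I will observe that $|\phi|/\eps$ lies in $\bD_0(G)$ whenever $\phi$ does, using that $\bD_0$ is stable under the normal contractions $x \mapsto |x|$ and $x \mapsto x/\eps$ (the Markov property of the Dirichlet form). Since $|\phi|/\eps \geq 1$ on $A_\eps := \{v \in V : |\phi(v)| \geq \eps\}$, Dirichlet's principle immediately gives $\Cap(A_\eps) \leq \cE(|\phi|/\eps) = \cE(|\phi|)/\eps^2 \leq \cE(\phi)/\eps^2 < \infty$.

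For the reverse direction, assume the capacity condition. For each $\eps>0$, since $\Cap(A_\eps)<\infty$, Dirichlet's principle furnishes some $u_\eps \in \bD_0(G)$ with $u_\eps \geq 1$ on $A_\eps$; replacing $u_\eps$ with $|u_\eps|$ (still in $\bD_0(G)$ by the Markov property), I may assume in addition that $u_\eps \geq 0$. Applying \eqref{eq:ALPlimitidentification} to $u_\eps$, the limit $\lim_n u_\eps(X_n)$ exists almost surely and satisfies $\bE_v[\lim_n u_\eps(X_n)] = (u_\eps)_{\HD}(v) = 0$ for every $v$; combined with the nonnegativity of $u_\eps$, this forces $u_\eps(X_n) \to 0$ almost surely. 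Since $u_\eps \geq 1$ on $A_\eps$, the random walk eventually stops visiting $A_\eps$, so $\limsup_n |\phi(X_n)| \leq \eps$ almost surely. Intersecting the corresponding full-measure events over $\eps = 1/k$ yields $\lim_n \phi(X_n) = 0$ almost surely, and applying \eqref{eq:ALPlimitidentification} once more, this time to $\phi$ itself, gives $\phi_{\HD} \equiv 0$; by the Royden decomposition this means $\phi \in \bD_0(G)$.

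The continuous case runs identically, with the random walk replaced by the Brownian motion stopped at $T_{\partial D}$ and \eqref{eq:ALPlimitidentification} replaced by \eqref{eq:ALPcontinuousidentification}; one just needs to work with a quasi-continuous version of $\Phi$ and of $u_\eps$ so that pointwise statements along Brownian sample paths are justified. The main obstacle in the argument is the promotion of the limit's vanishing in expectation---all that \eqref{eq:ALPlimitidentification} delivers directly---to pathwise vanishing of $u_\eps(X_n)$; this is exactly what the nonnegativity trick resolves, and it is only the existence of a nonnegative $u_\eps \in \bD_0(G)$ with $u_\eps \geq 1$ on $A_\eps$ (enabled by the Markov property of $\bD_0$) that makes the argument go through.
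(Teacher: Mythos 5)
Your argument is correct where it applies and follows essentially the same route as the paper's proof: Dirichlet's principle applied to $\eps^{-1}|\phi|$ for the forward direction, and for the converse the extraction of $\psi_\eps\in\bD_0(G)$ with $\psi_\eps\geq 1$ on $A_\eps$, pathwise vanishing of $\psi_\eps$ along the walk, and the identification \eqref{eq:ALPlimitidentification} to conclude $\phi_{\HD}\equiv 0$. (Your nonnegativity trick is a harmless detour: \eqref{eq:ALPlimitdisc} already gives $\lim_n\psi_\eps(X_n)=\lim_n(\psi_\eps)_{\HD}(X_n)=0$ almost surely for any $\psi_\eps\in\bD_0(G)$, with no need to pass to $|u_\eps|$.) The one genuine omission is the recurrent case: the statement is for an arbitrary network $G$, and when $G$ is recurrent the Royden decomposition and the Ancona--Lyons--Peres results you invoke are not available; there the claim must instead be dispatched directly by noting that $\bD_0(G)=\bD(G)$ and every set has capacity zero, so both conditions hold vacuously, after which one may assume transience as you implicitly do.
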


\begin{proof}
We prove item $1$; item $2$ is similar. If $G$ is recurrent, then $\bD_0(G)=\bD(G)$ \cite[Theorem 3.63]{Soardibook} and every set has capacity zero, so that the claim holds vacuously. Thus, it suffices to consider the case that 
 $G$ is transient.  
Let $\phi \in \bD(G)$. If $\phi \in \bD_0(G)$ then
for each $\eps>0$, the function $\psi=\eps^{-1}|\phi|$ satisfies $\psi \geq 1$ on the set $\{v\in V: |\phi(v)| \geq \eps\}$. It is easily verified that $\psi \in \bD_0(G)$ and that $\cE(\psi) \leq \eps^{-2} \cE(\phi)$, and so Dirichlet's principle implies that
\begin{equation}
\Cap\big(\{v\in V: |\phi(v)| \geq \eps\}\big) \leq \cE(\psi) \leq \eps^{-2} \cE(\phi) <\infty
\end{equation}
as claimed. Conversely, suppose that $\Cap(\{v\in V : |\phi(v)| \geq \eps\})<\infty$ for every $\eps>0$. Then for every $\eps>0$ there exists $\psi_\eps \in \bD_0(G)$ such that $\psi_\eps \geq 1$ on the set $\{v\in V : |\phi(v)|\geq \eps\}$. Let $\langle X_n \rangle_{n \geq0}$ be a random walk on $M$. We deduce from the uniqueness of the Royden decomposition \eqref{eq:Royden} and from \eqref{eq:ALPlimitdisc} and \eqref{eq:ALPlimitidentification} that $\lim_{n\to\infty} \psi_\eps(X_n) =0$ almost surely, and hence that $\limsup_{n\to\infty} |\phi(X_n)| \leq \eps$ almost surely. Since $\eps>0$ was arbitrary it follows that $\lim_{n\to\infty} \phi(X_n) =0$ almost surely, and we deduce from \eqref{eq:ALPlimitidentification} that $\phi \in \bD_0(G)$ as claimed. \qedhere
\end{proof}

\subsection{Proof of the main theorems}

We begin by recalling the Ring Lemma of Rodin and Sullivan \cite{RS87}, which was originally proven for circle packings of triangulations and was generalized to double circle packings of polyhedral maps in \cite{HutNach15b}. See \cite{Hansen,Ahar97} for quantitative versions in the case of triangulations. Given a double circle packing $(P,P^\dagger)$ in a domain $D \subseteq \C$ of a map $M$ we write $r(v)$ for the radius of $P(v)$ and $r(f)$ for the radius of $P^\dagger(f)$ for each $v\in V$ and $f\in F$.

\begin{theorem}[The Ring Lemma]
There exists a family of positive constants $\langle k_{n,m} : n\geq 3, m\geq 3\rangle$ such that if $(P,P^\dagger)$ is a double circle packing of a polyhedral planar map $M$ in a domain $D \subseteq \C$, then
\[r(v)/r(f) \leq k_{\deg(v),\max_{g \perp v}\deg(g)}\]
for every vertex $v\in V$ and every $f\in F$ incident to $v$.
\end{theorem}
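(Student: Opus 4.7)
The plan is to follow the classical Rodin--Sullivan strategy for the Ring Lemma, adapted to the orthogonal (rather than tangent) incidences between primal and dual discs in a double circle packing. The cornerstone is an angular identity at each vertex: the face discs $P^\dagger(f_i)$ incident to $v$ cut $\partial P(v)$ orthogonally precisely at the tangency points of $P(v)$ with its primal neighbours, and a direct computation shows that the arc of $\partial P(v)$ inside $P^\dagger(f_i)$ subtends an angle $2\arctan(r(f_i)/r(v))$ at the centre of $P(v)$. Since consecutive face discs $P^\dagger(f_i), P^\dagger(f_{i+1})$ are themselves tangent at the tangency point of $P(v)$ with $P(u_{i+1})$, these arcs partition $\partial P(v)$, yielding
\[
\sum_{i=1}^{n} 2\arctan\bigl(r(f_i)/r(v)\bigr)=2\pi,
\]
where $n=\deg(v)$. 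By pigeonhole, some face $f_{i_0}\perp v$ already satisfies $r(f_{i_0})/r(v)\geq \tan(\pi/n)$, and the analogous identity $\sum_{w\in \partial f}2\arctan(r(w)/r(f))=2\pi$ at each face $f\perp v$ (with at most $m$ terms) will be the key tool for propagating this bound to all faces incident to $v$.

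For the propagation, I would consider two consecutive incident faces $f_i, f_{i+1}$ sharing the edge $e_{i+1}=vu_{i+1}$ and apply the M\"obius inversion centred at the common tangency point $p=\partial P(v)\cap \partial P(u_{i+1})$. This sends the four mutually tangent/orthogonal circles meeting at $p$ to the four sides of an axis-aligned rectangle whose width and height are explicit functions of $r(v), r(u_{i+1}), r(f_i), r(f_{i+1})$, and sends the rest of the packing into the complement of this rectangle. The at most $m-2$ other primal discs on $\partial f_i$ (respectively $\partial f_{i+1}$) appear as circles tangent to the bottom (resp.\ top) side of this rectangle, with centres and radii constrained by the face angular identity at $f_i$ (resp.\ $f_{i+1}$) together with the codegree bound $\deg(f_i), \deg(f_{i+1})\leq m$. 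Unwinding these constraints gives an inequality of the form $r(f_{i+1})/r(v)\geq h_m\bigl(r(f_i)/r(v)\bigr)$ for some explicit continuous, strictly positive function $h_m$ depending only on $m$. Iterating this bound at most $n-1$ times around the ring of faces incident to $v$ converts the single lower bound at $f_{i_0}$ into a uniform lower bound on every $r(f_i)/r(v)$ depending only on $n$ and $m$, which gives the theorem.

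The main obstacle is extracting the explicit function $h_m$ from the post-inversion rectangle picture; note that the angular identity at $v$ alone does \emph{not} bound individual ratios $r(f_i)/r(v)$ from below, since a single term can in principle tend to zero with the others compensating, so the codegree bound must genuinely enter the argument through the face identity. The underlying geometric content is, however, simple: if $r(f_{i+1})/r(v)$ were too small relative to $r(f_i)/r(v)$, the two face discs and the primal disc $P(u_{i+1})$ separating them would have to fit together with the at most $m-2$ other primal discs on $\partial f_{i+1}$ in a configuration violating either an angular identity or the non-overlap property of the packing. Translating this into a clean quantitative bound requires a case analysis on the relative sizes of the discs appearing in the rectangle, using the face identity at $f_{i+1}$ together with the fact that every circle coming from $\partial f_{i+1}$ must fit in the strip between the image of $P(v)$ and that of $P(u_{i+1})$; this is the only nontrivial computation in the proof.
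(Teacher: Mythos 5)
The paper does not actually prove this statement: it is quoted from Rodin--Sullivan \cite{RS87} and, for double circle packings of polyhedral maps, from \cite{HutNach15b}. So there is no internal proof to compare against, and your proposal has to stand on its own. Its first half does: the orthogonality computation giving the arc angle $2\arctan(r(f)/r(v))$, the fact that these arcs partition $\partial P(v)$, hence $\sum_{f\perp v}2\arctan(r(f)/r(v))=2\pi$, and the pigeonhole bound $r(f_{i_0})/r(v)\geq\tan(\pi/\deg v)$ are all correct, as is the dual identity at each face.

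The gap is the propagation step, which is the entire content of the lemma and which you assert rather than prove. Worse, the ingredients you list for it appear to be insufficient, so this is not merely ``a nontrivial computation'' left to the reader. Normalise $r(v)=1$, let $f_i,f_{i+1}$ share the edge $vu$ with tangency point $p$, and suppose $r(f_i)\geq\alpha$ while $r(f_{i+1})=\varepsilon$ is tiny. In your inverted picture the images of the $\leq m-2$ remaining vertex discs of $f_{i+1}$ are centred on the line $M_2$ at distance $d=\rho^2/(2\varepsilon)$ from $p$, i.e.\ very far from the image of $P^\dagger(f_i)$, so every disjointness constraint you invoke is trivially satisfied; back in the original picture these discs sit in the cusp between $P(v)$ and $P(u)$ at distance $O(\varepsilon)$ from $p$ and have radii of order $\varepsilon^2(1+1/r(u))$. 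The face identity at $f_{i+1}$ is then satisfied exactly for every $\varepsilon>0$, because the two terms $\arctan(1/\varepsilon)+\arctan(r(u)/\varepsilon)$ already contribute $\pi-O(\varepsilon)$ and the remaining terms can be chosen to supply exactly the deficit. Hence no function $h_m$ with $r(f_{i+1})/r(v)\geq h_m(r(f_i)/r(v))>0$ can be extracted from the rectangle picture together with the face identities at $f_i,f_{i+1}$ and non-overlap of the discs on those two faces alone: the degeneration you need to exclude is locally consistent with all of them. The obstruction to $r(f_{i+1})\to 0$ is not visible from one pair of consecutive faces; it has to come from continuing around the flower of $v$ (the next primal neighbour $u_{i+2}$ is then forced to have radius $O(\varepsilon^2)$ with tangency point within $O(\varepsilon)$ of $p$, and one must track the primal radii and tangency-point positions simultaneously until the requirement that the angles close up to $2\pi$ produces a contradiction), or from a different global argument as in \cite{HutNach15b}. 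As written, the proposal identifies the correct starting identities but misdiagnoses where the difficulty lies and does not contain a proof of the key step.
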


For the rest of this section $M$ will be a transient weighted polyhedral map with bounded codegrees and bounded local geometry, $(P,P^\dagger)$ will be a double circle packing of $M$ in a domain $D \subseteq \C \cup \{\infty\}$, and $z$ will be the associated embedding of $M$. By applying a M\"obius transformation if necessary, we can and will assume that $D \subseteq \C$ (in which case $D\subsetneq \C$ by the He-Schramm theorem since $M$ is transient). 
We write $\bM=\bM(M)$ for the data
\[\bM(M) = \big(\max_{v\in V} \deg(v),\; \max_{f\in F} \deg(f),\; \sup_{e\in E} c(e),\; \sup_{e\in E} c^{-1}(e)\big).\] 
We say that two quantities are \textbf{comparable} if they differ up to positive multiplicative constants depending only on $\bM$, and write $\asymp$, $\preceq$, and $\succeq$  for equalities and inequalities that hold up to positive multiplicative constants depending only on the data $\bM$. 
We also use standard big-O notation, where again the implicit positive multiplicative constants depend only on $\bM$.

A consequence of the Ring Lemma is that the embedding of $M$ given by drawing straight lines between the centres of circles in its double circle packing is \emph{good}\footnote{We remark that all our results hold more generally for good straight-line embeddings of $M$, not just those produced using double circle packing. However, we are not aware of any general method of producing good embeddings that does not rely on double circle packing.} in the sense of \cite{ABGN14}, meaning that adjacent edges have comparable lengths and that the faces in the embedding have internal angles uniformly bounded away from zero and $\pi$. 
We will require the following useful geometric property of good embeddings of planar graphs, stated here for double circle packings.
For each $v\in V$ and $\delta>0$, we write $P_{\delta}(v)$ for the disc that has the same centre as $P(v)$ but has radius $\delta r(v)$. Given a set of vertices $A \subseteq V$, we write $P_{\delta}(A)$ for the union $P_{\delta}(A)=\bigcup_{v\in A} P_{\delta}(v)$.

	\begin{lemma}[The Sausage Lemma \cite{ABGN14}]
There exists a positive constant $\delta_1=\delta_1(\bM)$ such that for each two oriented edges $e_1,e_2\in E^\rightarrow$ of $M$ that do \emph{not} share an endpoint, the convex hull of $P_{\delta_1}(e^-_1)\cup P_{\delta_1}(e_1^+)$ and the convex hull of $P_{\delta_1}(e_2^-)\cup P_{\delta_1}(e^+_2)$ are disjoint.
\end{lemma}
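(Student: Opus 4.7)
The plan is to prove the Sausage Lemma by a direct geometric argument combining two ingredients: the Ring Lemma, which guarantees that within a bounded graph neighborhood all primal and dual radii are comparable, and the good embedding property, which gives uniform lower bounds on the internal angles of faces. The overall strategy is to first reduce to a local setting in which all relevant radii are comparable, then handle two cases according to whether $e_1$ and $e_2$ share a face.

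First I would observe that $\operatorname{conv}(P_\delta(e^-)\cup P_\delta(e^+))$ is contained in the Minkowski sum of the segment $[z(e^-),z(e^+)]$ with a disc of radius $\delta\max(r(e^-),r(e^+))$. So the sausage is a thin tube around the straight-line embedding of $e$, with thickness proportional to $\delta_1$ times a local radius. The statement therefore reduces to a uniform lower bound, relative to the local scale, on the Euclidean distance between the two straight-line segments representing $e_1$ and $e_2$. Moreover, for the two sausages even to have a chance of intersecting when $\delta_1$ is small, the four vertices $e_1^\pm,e_2^\pm$ must lie in a bounded Euclidean neighborhood of each other measured in terms of the largest radius. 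Applying the Ring Lemma along a short chain of tangencies, this forces all four radii to be comparable to some common scale $r>0$.

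Now I would split into two cases. In the case that $e_1$ and $e_2$ share no face, planarity forces at least one vertex or face of $M$ to separate them, and combined with the bounded combinatorial distance from the previous step, a finite case analysis using the good embedding property (angles bounded away from $0$ and $\pi$, bounded degrees and codegrees) yields a separation of the two segments by Euclidean distance $\succeq r$. In the case that $e_1$ and $e_2$ share a face $f$, the assumption that they share no vertex forces $\deg(f)\geq 4$; the dual disc $P^\dagger(f)$ has radius $\asymp r$ by the Ring Lemma and meets each primal disc $P(v)$ with $v\perp f$ orthogonally. Orthogonality implies that the tangency point of $P(e_i^-)$ and $P(e_i^+)$ lies on $\partial P^\dagger(f)$, and that the straight-line segment for $e_i$ approaches $P^\dagger(f)$ from outside, touching it only at that single tangency point. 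Since $e_1\neq e_2$ share no vertex and $\deg(f)$ is bounded, the two tangency points on $\partial P^\dagger(f)$ are distinct and separated by angular distance $\succeq 1$, and an elementary planar geometry argument then yields Euclidean separation $\succeq r$ between the two segments. Choosing $\delta_1$ smaller than the resulting implicit constant completes the argument.

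The main obstacle will be the shared-face case, where we must carefully exploit orthogonality of primal and dual circles together with the bounded codegree hypothesis to convert the combinatorial distinctness of the two edges into a quantitative geometric separation. A cleaner alternative (should this direct analysis become unwieldy) is a compactness argument: assume the lemma fails, pick a sequence $\delta_n\to 0$ with witnessing configurations, rescale so that the local radii stay bounded, and extract a subsequential limit using the compactness of configurations of comparable radii with bounded combinatorial data; the limit would be an embedded planar configuration with two distinct edges sharing no endpoint whose straight-line segments intersect, contradicting the planarity of the straight-line embedding induced by the limiting double circle packing.
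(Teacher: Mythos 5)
The paper does not actually prove this lemma: it is quoted from Angel, Barlow, Gurel-Gurevich and Nachmias \cite{ABGN14}, so there is no in-paper argument to compare yours against. Taken on its own terms, your sketch follows the natural route (and essentially the route of \cite{ABGN14}): reduce to a local configuration in which the Ring Lemma makes all relevant radii comparable, then use planarity of the straight-line embedding plus a finite case analysis. Two steps need more care than you give them. First, the reduction to comparable radii is not automatic from ``a short chain of tangencies'': two edges whose sausages nearly meet are not a priori joined by a short tangency chain. The clean argument is that for small $\delta$ the sausage of $e$ is contained in $\overline{P(e^-)}\cup\overline{P(e^+)}$ together with a small neighbourhood of their tangency point, and a primal disc can approach another edge's tangency point only through a dual disc of an incident face; this bounds the combinatorial distance between the four endpoints via the degree and codegree bounds, after which the Ring Lemma applies. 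Second, in the shared-face case, angular separation of the two tangency points on $\partial P^\dagger(f)$ does not by itself separate the segments: each segment $[z(e_i^-),z(e_i^+)]$ is \emph{tangent} to $\partial P^\dagger(f)$ at its tangency point, and two tangent lines to a circle at distinct points generically intersect. You must also use that each segment lies in the closure of its own pair of primal discs, and that the two pairs of closed discs meet in at most tangency points; combined with the angular lower bound this yields the quantitative separation $\succeq r$. With these repairs -- or via your compactness fallback, which is legitimate because the Ring Lemma prevents degeneration of a bounded combinatorial neighbourhood under rescaling -- the argument goes through.
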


We now define the two operators that will be the key players in the proof of \cref{thm:isomorphismgeneral}.

\begin{definition}[The operator $\mathsf{R}$]
Fix $\delta_0=\delta_0(\bM) \leq 1/2$ sufficiently small that $\delta_0$ is less than or equal to the sausage lemma constant $\delta_1$ and that $\frac{1}{4}|z(u)-z(v)| \geq \delta_0 r(v)$ for every adjacent pair $u,v\in V$.
 For each locally integrable $\Phi : D\to \R$, we define $\mathsf{R}[\Phi]:V\to \R $ by setting $\mathsf{R}[\Phi](v)$ to be the average value of $\Phi$ on the disc $P_{\delta_0}(v)$ for each $v\in V$, that is,
\[ \mathsf{R}[\Phi](v) = \frac{1}{\pi \delta_0^2 r(v)^2}\int_{P_{\delta_0}(v)} \Phi(z) \dif z.\]
\end{definition}
If $H\in \HD(D)$, then it follows from harmonicity that
$\mathsf{R}[H](v) = H \circ z(v)$ for every $v\in V$.

\begin{definition}[The operator $\mathsf{A}$] 
Consider the triangulation $T$ embedded with straight lines in $D$ that is obtained by drawing a straight line between $z(v)$ and $z(u)$ whenever $u$ and $v$ are adjacent vertices of $M$, and a straight line between $z(v)$ and $z(f)$ (the centre of $P^\dagger(f)$) whenever $v$ is a vertex of $M$ and $f\perp v$ is a face of $M$ incident to $v$. 
    For each function $\phi:V\to \R$, we define the \textbf{piecewise-affine extension} $\mathsf{A}[\phi]$ of $\phi$ to $D$
       to be
     the unique function on $D$ that takes the values 
     \[\mathsf{A}[\phi](z(v)) = \phi(v)  \text{ for every } v\in V\quad \text{ and } \quad \mathsf{A}[\phi](z(f))=\phi(f):=\frac{1}{\deg(f)}\sum_{v \perp f} \phi(v)  \text{ for every } f \in F  \]
     on $z(V)=\{z(v): v\in V\}$ and $z(F)=\{z(f): f \in F\}$,       and is affine on each edge and each face of the triangulation $T$.  
\end{definition}

 We fix a root vertex $o$ of $M$ with which to define the inner product on $\bD(M)$ in \eqref{eq:innerproductdefdisc}, and take the interior of $P_{\delta_0}(o)$ to be the precompact open set $O$ used to define the inner product on $\bD(D)$ in \eqref{eq:innerproductdefcont}.

\begin{lemma}
\label{lem:AandRenergy}
% $\mathsf{R}$ and $\mathsf{A}$ define bounded linear operators
$\mathsf{R}:\bD(D)\to\bD(M)$
and $\mathsf{A}:\bD(M)\to\bD(D)$ 
are bounded linear operators with norms bounded by constants depending only on $\bM(M)$, and also satisfy
\[\cE(\mathsf{R}[\Phi]) \preceq \cE(\Phi) \quad \text{ and } \quad \cE(\mathsf{A}[\phi]) \preceq \cE(\phi)\]
for every $\Phi \in \bD(D)$ and $\phi \in \bD(M)$. 
In particular, $\mathsf{R}[\Phi]\in \bD(M)$ for every $\Phi \in \bD(D)$ and $\mathsf{A}[\phi]\in \bD(D)$ for every $\phi\in \bD(M)$.
\end{lemma}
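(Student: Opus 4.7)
The plan is to prove the two energy inequalities---which are the substantive content of the lemma---and then upgrade them to bounded Hilbert-space operator norms by separately controlling the $L^2$ contribution on the reference set $O$, which I take to be the interior of $P_{\delta_0}(o)$.

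For $\cE(\mathsf{R}[\Phi]) \preceq \cE(\Phi)$, the first step is a single-edge Poincar\'e-type bound: for each $e = (u,v) \in E^\rightarrow$, expressing the difference $\mathsf{R}[\Phi](u)-\mathsf{R}[\Phi](v)$ as an average over pairs of points in $P_{\delta_0}(u) \times P_{\delta_0}(v)$ of the line integral of $\nabla \Phi$ along the straight segment between them, and then applying Cauchy--Schwarz, yields
\[
c(e)\bigl|\mathsf{R}[\Phi](u) - \mathsf{R}[\Phi](v)\bigr|^2 \preceq \int_{\mathsf{S}(e)} \|\nabla \Phi(z)\|^2\,dz,
\]
where $\mathsf{S}(e)$ is the convex hull of $P_{\delta_0}(u) \cup P_{\delta_0}(v)$; the multiplicative constant depends on $\bM$ via the bounded conductances and via the Ring Lemma, which controls the radii and the distance between the two disc centres. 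Because $\delta_0 \leq \delta_1$, the Sausage Lemma ensures that $\mathsf{S}(e_1) \cap \mathsf{S}(e_2) = \emptyset$ whenever $e_1, e_2$ share no endpoint, and combining this with the bounded-degree hypothesis shows that every point of $D$ lies in at most $O(1)$ of the sausages $\mathsf{S}(e)$. Summing the per-edge bound over all $e$ therefore gives $\cE(\mathsf{R}[\Phi]) \preceq \cE(\Phi)$.

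For $\cE(\mathsf{A}[\phi]) \preceq \cE(\phi)$, the function $\mathsf{A}[\phi]$ is affine on each triangle $\Delta$ of the refined triangulation $T$, so $\nabla \mathsf{A}[\phi]$ is constant on $\Delta$ and the contribution of $\Delta$ to $\cE(\mathsf{A}[\phi])$ is exactly given by the classical cotangent formula $\tfrac{1}{2}\sum_{i} (a_j - a_k)^2 \cot \theta_i$ in terms of the three vertex values $a_i$ and opposite angles $\theta_i$. The Ring Lemma forces all angles of $T$ to be uniformly bounded away from $0$ and $\pi$, so this expression is $\preceq \sum_{i<j}(a_i - a_j)^2$. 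Each triangle of $T$ has vertices of the form $z(u), z(v), z(f)$ with $u \sim v$ and $u, v \perp f$, so I must bound squared differences of the types $(\phi(u)-\phi(v))^2$ and $(\phi(u)-\phi(f))^2$. The first type appears directly in $\cE(\phi)$. For the second I would expand $\phi(u)-\phi(f) = \deg(f)^{-1}\sum_{w \perp f} (\phi(u)-\phi(w))$ and then apply Jensen's inequality together with the bounded-codegree hypothesis and the fact that any two vertices incident to $f$ are joined by a boundary path of length at most $\deg(f)$, to dominate $(\phi(u)-\phi(f))^2$ by a sum over boundary edges $e$ of $f$ of $(\phi(e^-)-\phi(e^+))^2$ with $O(1)$ weight. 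Summing the per-triangle bound over all triangles, each edge of $M$ appears with $O(1)$ total weight thanks to the bounded degree and codegree hypotheses, giving $\cE(\mathsf{A}[\phi]) \preceq \cE(\phi)$.

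To upgrade the energy inequalities to bounded operator norms with respect to the inner products in \eqref{eq:innerproductdefdisc} and \eqref{eq:innerproductdefcont}, I control the remaining reference-point terms directly. For $\mathsf{R}$, Jensen's inequality gives $\mathsf{R}[\Phi](o)^2 \leq |P_{\delta_0}(o)|^{-1}\int_O \Phi(z)^2\,dz$, which is controlled by $\|\Phi\|_{\bD(D)}^2$. For $\mathsf{A}$, on $O$ the function $\mathsf{A}[\phi]$ is a convex combination of the values of $\phi$ at $o$ and at the boundedly many neighbouring vertices and incident face-centres, so $\int_O |\mathsf{A}[\phi](z)|^2\,dz \preceq r(o)^2 \bigl(\phi(o)^2 + \cE(\phi)\bigr)$ after absorbing each difference $\phi(v)-\phi(o)$ into $\cE(\phi)$ via a short path from $o$. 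The main technical obstacle I anticipate is the bookkeeping in the $\mathsf{A}$ estimate---cleanly verifying that the indirect bound on $(\phi(u)-\phi(f))^2$ distributes so that each edge of $M$ is charged only an $O(1)$-bounded number of times across all contributing triangles, while keeping all implicit constants visibly dependent only on $\bM$.
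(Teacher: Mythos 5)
Your proposal is correct and follows essentially the same route as the paper: the per-edge chord-averaging bound for $\mathsf{R}$ with the Sausage Lemma controlling overlaps, the per-triangle affine energy estimate for $\mathsf{A}$ with the face-average differences absorbed via Cauchy--Schwarz over the boundary edges of each face, and the same Jensen/convex-combination treatment of the reference terms. (Your use of the cotangent formula in place of the paper's explicit linear change of variables to a reference triangle is a cosmetic difference; both reduce to the Ring Lemma keeping the angles of $T$ bounded away from $0$ and $\pi$.) The one place where your ``yields'' conceals the real work is in the $\mathsf{R}$ estimate: after Cauchy--Schwarz you are left with the expected line integral of $\|\nabla\Phi\|^2$ over a random chord of the sausage $\mathsf{S}(e)$, and converting this into $r(v)^{-2}\int_{\mathsf{S}(e)}\|\nabla\Phi\|^2$ requires showing that the occupation density of the random chord (equivalently, the law of a uniform point on it) is $O(r(v)^{-2})$ with respect to Lebesgue measure on $\mathsf{S}(e)$ --- this cone/Radon--Nikodym estimate is the main technical step of the paper's proof and should be supplied; one should also first work with continuously differentiable $\Phi$ (or invoke absolute continuity on lines) so that the fundamental theorem of calculus along segments is legitimate, and conclude for general $\Phi\in\bD(D)$ by density.
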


The main estimates needed for this lemma are implicit in
 \cite{GGNS15}, and our proof is closely modeled on the arguments in that paper.

\begin{proof}[Proof of \cref{lem:AandRenergy}]
We begin with $\mathsf{A}$. We wish to show that $\cE(\mathsf{A}[\phi])\preceq \cE(\phi)$. Let $\phi\in \bD(M)$, let $e\in E^\rightarrow$ be an oriented edge of $M$,  and let $T_{e}$ be the triangle with corners at $z(e^-),z(e^+),$ and $z(e^\ell)$. 
For each $e\in E^\rightarrow$,
let $\psi_e$ be the linear map sending $T_{e}$ to the convex hull of $\{0,1,i\}$ that sends $e^\ell$ to $0$, $e^-$ to $1$, and $e^+$ to $i$. It follows from the Ring Lemma that $\|\mathrm{D} \psi_e(z)\|\asymp r(e^-)^{-1}$ for all $z\in T_e$, where $\mathrm{D}\psi_e$ denotes the total derivative of $\psi_e$. On the other hand, $\mathsf{A}[\phi]\circ \psi_e^{-1}$ is equal to the affine function $x+iy \mapsto (1-x-y)\phi(e^\ell) + x\phi(e^-) + y \phi(e^+)$, and we deduce that
\begin{align*}\|\nabla \mathsf{A}[\phi](z)\| &\leq \|\mathrm{D} \psi_e(z)\|\left\|\nabla \!\left(\mathsf{A}[\phi] \circ \psi_e^{-1}\right)(\psi_e(z))\right\|\\
&\asymp r(e^-)^{-1} \max \bigl\{|\phi(e^-)-\phi(e^+)|,\, |\phi(e^-)-\phi(e^\ell)|,\, |\phi(e^+)-\phi(e^\ell)|\bigr\}.
\end{align*}
Integrating over $z \in T_{e}$ and summing over $e\in E^\rightarrow$,  we obtain that
\begin{align}
\cE(\mathsf{A}[\phi]) &= \sum_{e\in E^\rightarrow} \int_{T_{e}} \|\nabla \mathsf{A}[\phi](z)\|^2 \dif z \preceq \sum_{e\in E^\rightarrow} 
\max \big\{|\phi(e^-)-\phi(e^+)|,\, |\phi(e^-)-\phi(e^\ell)|,\, |\phi(e^+)-\phi(e^\ell)|\big\}^2
\nonumber
\\
&\preceq 
\sum_{e\in E^\rightarrow} |\phi(e^-)-\phi(e^+)|^2 \quad + \sum_{v \in V, f\in F, f \perp v} |\phi(v)-\phi(f)|^2,
\label{eq:EAtwoterms}
\end{align}
where in the first inequality we have used the fact that, by the Ring Lemma, the area of $T_{e}$ is comparable to $r(e^-)^2$ for every $e\in E^\rightarrow$. Now, for each face $f$ of $M$, we have that
\[\max_{u,v \perp f} |\phi(u)-\phi(v)| \leq \sum_{e:e^\ell=f}|\phi(e^+)-\phi(e^-)|,\]
and hence by Cauchy-Schwarz we have that
\begin{align}
\sum_{v \in V, f\in F, f \perp v} |\phi(v)-\phi(f)|^2
&\leq 
\sum_{v \in V, f\in F, f \perp v} \max_{u \perp f} |\phi(u)-\phi(v)|^2
\leq 
\sum_{v \in V, f\in F, f \perp v} \left[\sum_{e:e^\ell=f}|\phi(e^+)-\phi(e^-)|\right]^2
\nonumber
\\
 &\leq
  \sum_{v \in V, f \in F, f \perp v} \deg(f) \sum_{e: e^\ell=f} |\phi(e^+)-\phi(e^-)|^2.
  \label{eq:EAsecondterm}
\end{align}
Since each oriented edge is counted at most a constant number of times in this sum we obtain from \eqref{eq:EAtwoterms} and \eqref{eq:EAsecondterm} that
\begin{equation}
\label{eq:Aenergyfinal}
\cE(\mathsf{A}[\phi]) \preceq \sum_{e\in E^\rightarrow} |\phi(e^+)-\phi(e^-)|^2 \preceq \cE(\phi)
\end{equation}
as required. To control the other term in $\langle \mathsf{A}[\phi],\mathsf{A}[\phi]\rangle$, observe that
\begin{align*}
 \int_{P_{\delta_0}(o)} \mathsf{A}[\phi](z)^2 \dif z &\preceq \max\big\{|\phi(u)|^2 : u \text{ shares a face with $o$}\big\}\\ &\preceq \phi(o)^2 + \max\big\{|\phi(u)-\phi(o)|^2 : u \text{ shares a face with $o$}\big\}, \end{align*}
 where we say that two vertices $u$ and $v$ \textbf{share a face} if there exists $f\in F$ such that $u\perp f$ and $v\perp f$. 
A simple Cauchy-Schwarz argument similar to the above then shows that
\begin{equation}
\label{eq:Anormother}
\int_{P_{\delta_0}(o)} \mathsf{A}[\phi](z)^2 \dif z \preceq \phi(o)^2 + \cE(\phi),
\end{equation}
and combining \eqref{eq:Aenergyfinal} and \eqref{eq:Anormother} yields that $\langle \mathsf{A}[\phi],\mathsf{A}[\phi]\rangle \preceq \langle \phi,\phi \rangle$ as required.

\medskip

We now show that $\mathsf{R}$ is bounded. 
 We wish to show that $\langle \mathsf{R}[\Phi],\mathsf{R}[\Phi]\rangle \preceq \langle \Phi,\Phi\rangle$ and moreover that $\cE(\mathsf{R}[\Phi]) \preceq \cE(\Phi)$ for every $\Phi\in \bD(D)$. Let us first suppose that $\Phi$ is continuously differentiable. It is well known, and can be seen by a simple mollification argument, that such $\Phi$ are dense in $\bD(D)$ (as indeed are the smooth Dirichlet functions). For each $v\in V$, let $X_v$ be a random point chosen uniformly from the disc $P_{\delta_0}(v)$, independently from each other,
 so that
$\mathsf{R}[\Phi](v)=\E \Phi(X_v)$.
  For each $u,v\in V$, let $\Gamma_{u,v}$ be the random line segment connecting $X_u$ to $X_v$. By Jensen's inequality and the assumption that $\Phi$ is continuously differentiable we have that
  \[
  \left(\mathsf{R}[\Phi](u)-\mathsf{R}[\Phi](v)\right)^2 
  =
  \E\left[
  \Phi(X_u)-\Phi(X_v)
   \right]^2
   \leq
   \E\left[
  \left(\Phi(X_u)-\Phi(X_v)\right)^2
   \right] =\E \Big[ \big(\int_{\Gamma_{u,v}}\!\|\nabla \Phi(z)\|\dif z\big)^2 \Big].
  \]
For each adjacent $u,v \in V$, conditional on $\Gamma_{u,v}$, let $Z_{u,v}$ be a random point chosen uniformly on the line segment $\Gamma_{u,v}$. The Cauchy-Schwarz inequality implies that
  \[
\Bigl(\int_{\Gamma_{u,v}}\!\|\nabla \Phi(z)\|\dif z\Bigr)^2 \leq 
|\Gamma_{u,v}| \int_{\Gamma_{u,v}} \|\nabla \Phi(z)\|^2 \dif z
\leq
|\Gamma_{u,v}|^2 \,\E\left[ \|\nabla \Phi(Z_{u,v})\|^2 \mid \Gamma_{u,v}\right].
  \]
  % and hence that
    Next, the Ring Lemma implies that $|\Gamma_{u,v}|\preceq r(v)$, and we deduce that
    \begin{equation}
    \label{eq:RZbound}
    \left(\mathsf{R}[\Phi](u)-\mathsf{R}[\Phi](v)\right)^2  \leq 
\E\left[\Bigl(\int_{\Gamma_{u,v}}\!\|\nabla \Phi(z)\|\dif z\Bigr)^2\right] \preceq r(v)^2 \E\left[ \|\nabla \Phi(Z_{u,v})\|^2 \right].
    \end{equation}

    Let $\mu_{u,v}$ be the law of $Z_{u,v}$ and let $A_{u,v}$ be its support, i.e., the convex hull of $P_{\delta_0}(u)\cup P_{\delta_0}(v)$. We claim that the Radon-Nikodym derivative of $\mu_{u,v}$ with respect to the Lebesgue measure on $A_{u,v}$ is
    $O(r(v)^{-2})$. This is equivalent to the claim that
    \begin{equation}
    \label{eq:RadonNikodym}
    \P\left(Z_{u,v} \in B(z,\delta r(v))\right)\preceq  \delta^2 
    \end{equation}
    for every $z \in A_{u,v}$ and $\delta>0$. Suppose without loss of generality that $|z-z(v)|\leq |z-z(u)|$, and condition on the value of $X_u$, so that $|X_u-z|\geq |z(u)-z(v)|/4\succeq r(v)$ by definition of $\delta_0$.  In order for $Z_{u,v}$ to be in the ball $B(z,\delta r(v))$, we must have that $X_v$ is in the cone $K$ that has its vertex at $X_u$ and that is tangent to $B(z,\delta r(v))$, see \cref{fig:cone}. Since $|X_u-z|\geq |z(u)-z(v)|/4$, it follows by elementary trigonometry that the internal angle at the vertex of $K$ is $O(\delta)$, and consequently that the intersection of $K$ with $P_{\delta_0}(v)$ (or indeed with all of $A_{u,v}$), being contained inside a triangle with height $O(r(v))$ and width $O(\delta r(v))$, has area at most  $O(\delta r(v)^{2})$. Thus, the probability that $X_v$ lies in this region is at most $O(\delta)$. Conditioned on the event that $X_v$ lies in $K$, the intersection of $\Gamma_{u,v}$ with $B(z,\delta)$ has length at most $2\delta r(v)$, and so the conditional probability that $Z_{u,v}$ lies in this segment is $O(\delta)$. The estimate \eqref{eq:RadonNikodym} follows.

    \begin{figure}[t]
\centering
\includegraphics[width=0.65\textwidth]{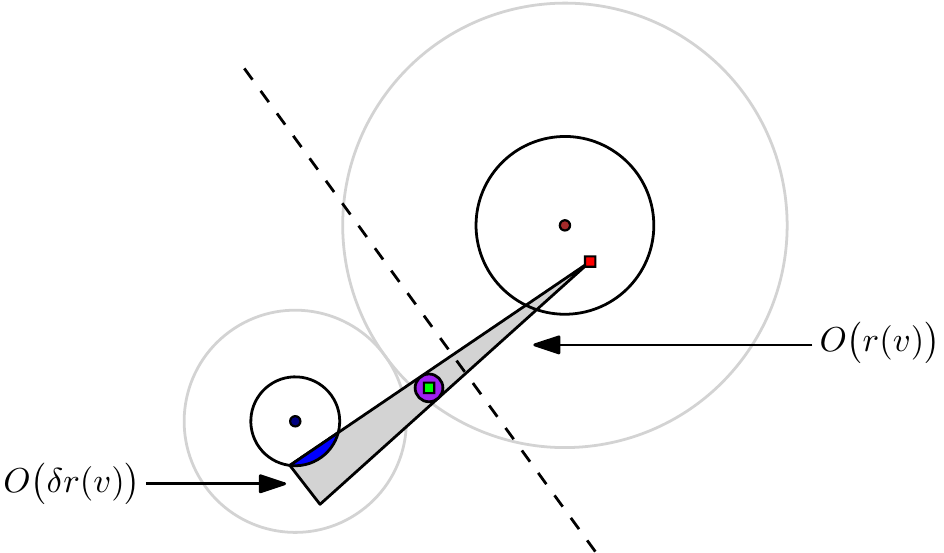}

\caption{\small{Illustration of the proof of the boundedness of $\mathsf{R}$. Suppose that $z$ (green square) is closer to $z(v)$ (navy disc) than to $z(u)$ (brown disc). Then conditional on the location of $X_u$ (red square), in order for $Z_{u,v}$ to be located in $B(z,\delta r(v))$ (purple disc),  $X_v$ must be located in the intersection (blue segment) of $P_{\delta_0}(v)$ with the cone whose vertex is at $X_u$ and that is tangent to $B(z,\delta r(v))$. The dashed line is the perpendicular bisector of the line from $z(u)$ to $z(v)$. This intersection is contained within a triangle (grey) whose sides have lengths of order $O(r(v))$, $O(r(v))$ and $O(\delta r(v))$, and consequently has area $O(\delta r(v)^2)$.}
}
\label{fig:cone}
\end{figure}

    Integrating over the Radon-Nikoydm estimate \eqref{eq:RadonNikodym} we obtain that
    \[
    \E\left[ \|\nabla \Phi(Z_{u,v})\|^2 \right] =
   \int_{A_{u,v}}\frac{d\mu_{u,v}(z)}{\dif z} \| \nabla \Phi(z) \|^2  \dif z
     \preceq r(v)^{-2} \int_{A_{u,v}} \| \nabla \Phi(z) \|^2 \dif z
    \]
    and hence by \eqref{eq:RZbound} that
    \begin{equation}
    \label{eq:Rmainestimate}
    \left(\mathsf{R}[\Phi](u)-\mathsf{R}[\Phi](v)\right)^2 \preceq  \int_{A_{u,v}} \| \nabla \Phi(z) \|^2 \dif z
    \end{equation}
    for every adjacent $u,v\in V$. Since \eqref{eq:Rmainestimate} holds uniformly for all continuously differentiable $\Phi\in \bD(D)$ and the expressions on both sides of the inequality are  continuous functions of $\Phi \in \bD(D)$, we deduce by density that the inequality holds for \emph{all} $\Phi \in \bD(D)$.

    Since $\delta_0$ was taken to be less than the Sausage Lemma constant, we have that each point $z$ is in at most $\max_{v\in V}\deg(v)=O(1)$ different regions of the form $A_{u,v}$, so that applying \eqref{eq:Rmainestimate} yields that
     % we obtain that
    \begin{equation}
    \label{eq:ERfinalbound}
    \cE(\mathsf{R}[\Phi]) =  \sum_{e\in E^\rightarrow} \left(\mathsf{R}[\Phi](e^-)-\mathsf{R}[\Phi](e^+)\right)^2 \preceq \sum_{e\in E^\rightarrow} \int_{A_{e^-,e^+}} \| \nabla \Phi(z)\|^2 \dif z \preceq \int_D \| \nabla \Phi(z)\|^2 \dif z = \cE(\Phi)
    \end{equation}
    as required.  The other term in $\langle \mathsf{R}[\Phi],\mathsf{R}[\Phi]\rangle$ can be bounded using Jensen's inequality,
    which yields that
\begin{equation}
\label{eq:normRotherbound}|\mathsf{R}[\Phi](o)|^2 \preceq \int_{P_{\delta_0}(o)}\Phi^2(z) \dif z. 
\end{equation}
Combining  \eqref{eq:ERfinalbound} and \eqref{eq:normRotherbound} yields that $\langle \mathsf{R}[\Phi],\mathsf{R}[\Phi]\rangle \preceq \langle \Phi,\Phi\rangle$ as required. 
    \qedhere

\end{proof}

It is an immediate consequence of the closed graph theorem that if a Banach space $V$ is written as the direct sum of two closed subspaces $V=V_1 \oplus V_2$ then the associated projections onto each of the subspaces are bounded. (This can also be argued directly.) Applying this fact in our setting we obtain that the projections $\phi \mapsto \phi_{\HD}$ and $\Phi \mapsto \Phi_{\HD}$ are bounded. Thus, it follows as an immediate corollary to \cref{lem:AandRenergy} that the operators $\Disc:\HD(D)\to\HD(M)$ and $\Cont:\HD(M)\to\HD(D)$ defined by
\begin{align}\mathsf{Disc}[H](v) &= (\mathsf{R}[H])_{\HD}(v) =  (H \circ z)_{\HD}(v) = \bE_v\left[\lim_{n\to\infty} H \circ z(X_n) \right]
\qquad &H\in \HD(D),\; v\in V 
\label{eq:discdef}\\
\mathsf{Cont}[h](z) &= \,(\mathsf{A}[h])_{\HD}(z)\, = \E_{z}\left[\lim_{t\to T_{\partial D}} \mathsf{A}[h](B_t) \right] \qquad & h \in \HD(M),\; z \in D \label{eq:contdef}
\end{align}
 are also well defined and bounded. Here the final equalities of \eqref{eq:discdef} and \eqref{eq:contdef} follow from \eqref{eq:ALPlimitidentification} and \eqref{eq:ALPcontinuousidentification} respectively.

\medskip

A second immediate corollary is the following. 
\begin{corollary}
\label{lem:d0tod0easy}
If $\phi \in \bD_0(M)$ then $\mathsf{A}[\phi] \in \bD_0(D)$. Similarly, if $\Phi\in \bD_0(D)$ then $\mathsf{R}[\Phi] \in \bD_0(M)$.
\end{corollary}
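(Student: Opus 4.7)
The plan is to combine the boundedness of $\mathsf{A}$ and $\mathsf{R}$ from \cref{lem:AandRenergy} with the definitions of $\bD_0(M)$ and $\bD_0(D)$ as closures of finitely supported functions and compactly supported Dirichlet functions respectively. By continuity, it suffices to check that $\mathsf{A}$ sends every finitely supported $\phi$ into $\bD_0(D)$ and that $\mathsf{R}$ sends every compactly supported Dirichlet $\Phi$ into $\bD_0(M)$; the general assertions then follow by approximating $\phi$ and $\Phi$ in norm and passing to the limit, since $\bD_0(M)$ and $\bD_0(D)$ are closed.

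For the first inclusion, I would observe that when $\phi$ is supported on a finite vertex set $S\subset V$, the piecewise-affine extension $\mathsf{A}[\phi]$ vanishes identically on every triangle $T_e$ in the straight-line triangulation underlying the definition of $\mathsf{A}$ whose three corners $z(e^-),z(e^+),z(e^\ell)$ avoid both $\{z(v): v\in S\}$ and $\{z(f): f \text{ incident to some }v\in S\}$. The bounded degree and bounded codegree of $M$ imply that only finitely many $T_e$ can violate this condition, so $\mathsf{A}[\phi]$ is supported on a finite union of closed triangles in $D$, is compactly supported, and hence belongs to $\bD_0(D)$.

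For the second inclusion, let $\Phi$ vanish almost everywhere outside a compact set $K\subset D$. Since $\mathsf{R}[\Phi](v)$ is an integral average of $\Phi$ over $P_{\delta_0}(v)\subset P(v)$, we have $\mathsf{R}[\Phi](v)=0$ whenever $P(v)\cap K=\varnothing$, so it suffices to know that only finitely many discs of $P$ meet $K$. This is the standard local-finiteness-in-the-carrier property of locally finite circle packings: any accumulation point $z^*$ of centres of discs meeting $K$ would lie in $D=\operatorname{carr}(P)$, hence either in some disc of $P$ or in a bounded complementary component whose boundary consists of arcs from finitely many discs, either of which is incompatible with disjointness of the packing together with local finiteness of the tangency map near $z^*$. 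Granting this, $\mathsf{R}[\Phi]$ is finitely supported and therefore lies in $\bD_0(M)$.

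The argument is essentially mechanical given \cref{lem:AandRenergy}; the only nontrivial input beyond that lemma is the local finiteness of the packing in its carrier, which I would invoke as a standard feature of double circle packings of locally finite polyhedral maps rather than prove from scratch.
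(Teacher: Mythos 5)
Your proof is correct and follows essentially the same route as the paper: reduce by density and the boundedness of $\mathsf{A}$ and $\mathsf{R}$ (\cref{lem:AandRenergy}) to the case of finitely supported $\phi$ and compactly supported $\Phi$, where the images are compactly supported and finitely supported respectively. The paper dispatches the $\mathsf{R}$ direction with ``the second being similar''; your explicit appeal to local finiteness of the packing in its carrier (which the paper itself uses elsewhere) is exactly the right ingredient there.
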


\begin{proof}
We prove the first sentence, the second being similar. 
It is immediate from the definitions that if $\phi \in \bD_0(M)$ is 
finitely supported, then $\mathsf{A}[\phi]$ is compactly supported. We conclude by applying the boundedness of $\mathsf{A}$.
\end{proof}

The following lemma, which is proved below and is also an easy corollary of \cref{lem:AandRenergy}, is also implicit in \cite{GGNS15}; indeed, it can be thought of as a quantitative form of the main result of that paper.

\begin{lemma}
\label{lem:CapComparison}
For every $0<\delta \leq 1/2$, we have that
\[
 \delta^4 \Cap(A) \preceq  \Cap(P_{\delta}(A))  \preceq \, \Cap(A)
\]
for every set of vertices $A$ in $M$.
\end{lemma}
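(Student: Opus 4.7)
The plan is to prove each inequality by Dirichlet's principle, transferring a near-optimal capacitor on one side to an admissible test function on the other side using the operators $\mathsf{R}$ (for the lower bound) and $\mathsf{A}$ (for the upper bound). \cref{lem:AandRenergy,lem:d0tod0easy} guarantee that both operators preserve Dirichlet energy up to $\bM$-dependent constants and also send $\bD_0$ into $\bD_0$, which is exactly what the capacity variational formula demands.

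For the lower bound $\delta^4 \Cap(A) \preceq \Cap(P_\delta(A))$, I would take any admissible $\Psi \in \bD_0(D)$ with $\Psi \geq 1$ a.e.\ on an open neighborhood of $P_\delta(A)$, truncate so that $0 \leq \Psi \leq 1$, and observe that for each $v \in A$,
\[
\mathsf{R}[\Psi](v) \;=\; \frac{1}{|P_{\delta_0}(v)|}\int_{P_{\delta_0}(v)} \Psi \;\geq\; \frac{|P_{\delta \wedge \delta_0}(v)|}{|P_{\delta_0}(v)|} \;=\; \frac{(\delta \wedge \delta_0)^2}{\delta_0^2} \;\succeq\; \delta^2.
\]
Thus $c\delta^{-2}\mathsf{R}[\Psi]$ is an admissible discrete capacitor for $A$ of energy $O(\delta^{-4}\cE(\Psi))$, and taking the infimum over $\Psi$ yields the bound.

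For the upper bound $\Cap(P_\delta(A)) \preceq \Cap(A)$, monotonicity of capacity first reduces matters to the case $\delta=1/2$. Given any admissible $\phi \in \bD_0(M)$ with $\phi|_A \geq 1$, truncated so that $0 \leq \phi \leq 1$, I plan to \emph{fatten} to
\[
\tilde\phi(v) \;:=\; \max\{\phi(u) : u \text{ is within graph distance } k \text{ of } v \text{ in } M\}
\]
for a sufficiently large constant $k=k(\bM)$; a standard Lipschitz-of-max argument using bounded degree and codegree gives $\tilde\phi \in \bD_0(M)$ with $\cE(\tilde\phi) \preceq \cE(\phi)$, and $\tilde\phi \equiv 1$ on the $k$-neighborhood of $A$ in $M$. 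Setting $\Psi := \mathsf{A}[\tilde\phi] \in \bD_0(D)$ then gives $\cE(\Psi) \preceq \cE(\phi)$ by \cref{lem:AandRenergy}, and provided $k$ exceeds the maximum codegree, every triangle $T_e$ of $T$ incident to $z(v)$ for some $v \in A$ has all three $\tilde\phi$-corner values equal to $1$ (the face-center value being a convex average of vertex values), so $\Psi \equiv 1$ on the entire vertex patch around $z(v)$.

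The main obstacle is the geometric step needed to conclude: one must verify that $P_{1/2}(v)$ is contained in the vertex patch around $z(v)$ in the straight-line embedding of $T$. I plan to deduce this directly from the defining properties of a double circle packing---$\partial P(v)$ is tangent to each neighboring $P(u_i)$ at a point on the segment $z(v)z(u_i)$, and meets each adjacent $P^\dagger(f_j)$ orthogonally at a point forming a right angle with $z(f_j)$---which together force each straight edge of the polygonal vertex patch (from $z(u_i)$ to $z(f_j)$) to pass outside $P(v)$, with uniform angle bounds supplied by the Ring Lemma. Once this containment is in hand, $\Psi \geq 1$ on $P_{1/2}(A)$, Dirichlet's principle gives $\Cap(P_{1/2}(A)) \preceq \cE(\phi)$, and taking the infimum over $\phi$ completes the upper bound.
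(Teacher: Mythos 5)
Your lower bound is essentially identical to the paper's: truncate an admissible $\Phi\in\bD_0(D)$ to $[0,1]$, average over $P_{\delta_0}(v)$ to get $\mathsf{R}[\Phi](v)\succeq\delta^2$ on $A$, rescale by $O(\delta^{-2})$, and invoke \cref{lem:AandRenergy,lem:d0tod0easy} with Dirichlet's principle. Your upper bound, however, takes a genuinely different route. The paper does not fatten: it truncates $\phi$ to $\psi\in[0,1]$ with $\psi|_A=1$ and applies the continuity estimate of \cref{lem:continuity}(1), which gives $\mathsf{A}[\psi]\geq 1-\delta$ on $P_\delta(A)$ directly (since the oscillation of $\psi$ over a face is at most $1$), and then simply rescales by $2(1-\delta)^{-1}\leq 4$ to get an admissible continuum capacitor. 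This one-line rescaling is what your argument replaces with the max-over-a-$k$-ball fattening $\tilde\phi$, which buys you $\mathsf{A}[\tilde\phi]\equiv 1$ exactly on the vertex patches but costs you two extra verifications: (i) that $\tilde\phi\in\bD_0(M)$ --- the energy bound $\cE(\tilde\phi)\preceq\cE(\phi)$ is indeed standard for bounded degree, but membership in $\bD_0$ is not completely immediate since the max operator is nonlinear, so you should say how you get it (e.g.\ via \cref{lem:D0char} and a capacity comparison between a set and its $k$-neighbourhood); and (ii) the geometric containment of $P_{1/2}(v)$ in the union of triangles at $z(v)$. On point (ii) your sketch is correct and can be made precise: placing $z(v)$ at the origin with $r(v)=a$, $z(u)=(a+b,0)$, the orthogonality relations force $z(f)=(a,c)$ with $c=r(f)$, and a short computation shows the segment $z(u)z(f)$ stays at distance at least $a$ from $z(v)$ in all cases, so in fact all of $P(v)$ (not just $P_{1/2}(v)$) lies in the patch; note this containment is also what underlies the paper's own \cref{lem:continuity}(1), so you are not avoiding it, just making it explicit. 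In short: correct, with the upper bound done by a heavier but workable detour; the paper's rescaling trick would let you delete the fattening step entirely.
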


We will require the following simple estimates.

\begin{lemma}[Continuity estimates]
\label{lem:continuity}
\hspace{1cm}
\begin{enumerate}
	\item
Let $\phi:V\to \R$ be a function. Then 
\[ \sup_{z\in P_\delta(v)}\big| \mathsf{A}[\phi](z) - \phi(v)\big| \leq \delta\sup \left\{ |\phi(u)-\phi(v)| : \text{$u$ and $v$ share a face of $M$}\right\} 
\preceq \delta \sqrt{\cE(\phi)}
\]
for every $v\in V$ and $0<\delta<1$.
\item 
% There exists a constant $C=C(\bM)$ such that the following holds. 
Let $H:D\to\R$ be a harmonic function. 
 Then for every $r>0$, $\alpha>1$, and $z_0 \in D$ such that $B(z_0,\alpha r) \subseteq D$ we have that
 \[\sup_{z\in B(z_0,r)} |H(z)-H(z_0)|^2
 \leq \frac{1}{\pi} \log\left[ \frac{\alpha^2}{\alpha^2-1} \right] \int_{B(z_0,\alpha r)} \| \nabla H(z) \|^2 \dif z.
 \]
% \[ 
% \sup_{z\in P_\delta(v) \cup \bigcup_{f\perp v} P^\dagger(f)}\big|H(z) - H\circ z(v)\big| \leq C \sup\left\{\big|H(z)-H\circ z(v)\big| : z \in P^\dagger(f) \text{ for some $f \perp v$}
% \right\} \delta\]
% for every $v\in V$ and $0<\delta \leq 1/2$.
\end{enumerate}
\end{lemma}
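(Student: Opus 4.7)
I will handle the two parts separately; each is a self-contained estimate.

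For part 1, my plan is to work triangle-by-triangle on the triangulation $T$ used to define $\mathsf{A}$. Fix $v\in V$ and $z \in P_\delta(v)$, and note that since $\delta \leq 1 \leq 1/\delta_0$ the disc $P_\delta(v)$ is covered by the triangles of $T$ incident to $z(v)$. On each such triangle $T_e$ with corners $z(v), z(u), z(f)$ (where $u \sim v$ and $f \perp v$), write $z - z(v) = a(z(u)-z(v)) + b(z(f)-z(v))$ in barycentric coordinates with $a,b \geq 0$ and $a+b \leq 1$. Since $\mathsf{A}[\phi]$ is affine on $T_e$,
\[
\mathsf{A}[\phi](z) - \phi(v) = a\bigl(\phi(u)-\phi(v)\bigr) + b\bigl(\phi(f) - \phi(v)\bigr),
\]
and using $\phi(f) - \phi(v) = \deg(f)^{-1}\sum_{w\perp f}(\phi(w)-\phi(v))$ gives
\[
\bigl|\mathsf{A}[\phi](z) - \phi(v)\bigr| \leq (a+b)\,\sup\bigl\{|\phi(w)-\phi(v)|: w,v \text{ share a face}\bigr\}.
\]
To control $a+b$ it suffices, by the Ring Lemma, to observe that $|z(u)-z(v)| \asymp r(v)$, $|z(f) - z(v)| \asymp r(v)$, and the angle at $z(v)$ inside $T_e$ is bounded away from $0$ and $\pi$; hence $|z-z(v)| \succeq (a+b)\,r(v)$, so $a+b \preceq \delta$. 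This proves the first inequality (up to a constant absorbed into $\delta$). For the second inequality, if $u$ and $v$ share a face $f$ then they are joined by a path of length at most $\deg(f) = O(1)$ around $\partial f$, and Cauchy--Schwarz together with the lower bound on conductances yields $|\phi(u)-\phi(v)|^2 \preceq \cE(\phi)$.

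For part 2, my plan is to reduce to a Taylor series computation. Since $B(z_0,\alpha r)$ is simply connected and $H$ is harmonic there, pick a holomorphic $f$ on $B(z_0,\alpha r)$ with $H = \mathrm{Re}\,f$, and expand
\[
f(z_0 + w) = c_0 + \sum_{n\geq 1} c_n w^n, \qquad |w| < \alpha r.
\]
By the Cauchy--Riemann equations $|f'|^2 = \|\nabla H\|^2$, and orthogonality of the monomials $w^{n-1}$ in $L^2(B(0,\alpha r))$ gives
\[
\int_{B(z_0,\alpha r)} \|\nabla H(z)\|^2 \dif z = \int_{B(0,\alpha r)} |f'|^2 \dif w = \pi \sum_{n\geq 1} n|c_n|^2(\alpha r)^{2n}.
\]
For $|z-z_0|=\rho \leq r$, applying Cauchy--Schwarz with the factorization $|c_n|r^n = \bigl(|c_n|(\alpha r)^n \sqrt{n}\bigr) \cdot \bigl(\alpha^{-n}/\sqrt{n}\bigr)$ yields
\[
|H(z)-H(z_0)|^2 \leq \Bigl(\sum_{n\geq 1} |c_n| r^n\Bigr)^{\!2} \leq \Bigl(\sum_{n\geq 1} n|c_n|^2 (\alpha r)^{2n}\Bigr) \Bigl(\sum_{n\geq 1} \frac{1}{n\alpha^{2n}}\Bigr).
\]
The second sum evaluates in closed form to $-\log(1-\alpha^{-2}) = \log[\alpha^2/(\alpha^2-1)]$, and combining with the formula for the Dirichlet integral above gives exactly the stated inequality.

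Neither step presents a serious obstacle; the only point requiring some care is showing $a+b \preceq \delta$ in part 1, which is where the geometric content of the Ring Lemma (comparable adjacent radii and bounded internal angles in $T_e$) enters, and tracking that the constant in part 2 is exactly $\frac{1}{\pi}\log[\alpha^2/(\alpha^2-1)]$, which forces the particular Cauchy--Schwarz splitting above.
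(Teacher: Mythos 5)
Your part 2 is correct and amounts to a self-contained proof of the inequality that the paper simply imports by citation: the paper takes $\Phi$ holomorphic with real part $H$, rescales to the unit disc, and invokes \cite[Theorem 1.2.1]{MR3185375}, whose proof is precisely your Taylor-coefficient identity $\int_{B(z_0,\alpha r)}\|\nabla H\|^2 = \pi\sum n|c_n|^2(\alpha r)^{2n}$ followed by the Cauchy--Schwarz splitting $|c_n|r^n = (|c_n|\sqrt{n}(\alpha r)^n)\cdot(\alpha^{-n}/\sqrt{n})$. Your proof of the second inequality of part 1 (path around $\partial f$ plus Cauchy--Schwarz and the conductance bounds) is also exactly the paper's argument.

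For the first inequality of part 1, however, your argument as written only yields $a+b\preceq\delta$, hence a bound of the form $C\delta\sup\{\cdots\}$ rather than the stated $\delta\sup\{\cdots\}$, as you acknowledge. This is not purely cosmetic: in the proof of \cref{lem:CapComparison} the estimate is applied in the form $\mathsf{A}[\psi]\geq 1-\delta$ on $P_\delta(A)$ for \emph{all} $0<\delta\leq 1/2$, and the factor $2(1-\delta)^{-1}$ appearing there degenerates if one only knows $\mathsf{A}[\psi]\geq 1-C\delta$; one would then have to restrict that lemma to $\delta\leq\delta_2(\bM)$ and patch its applications. The exact constant does hold, and your barycentric setup gives it once the ``comparable lengths and bounded angles'' step is replaced by a sharper geometric fact: writing $z-z(v)=(a+b)\bigl[\tfrac{a}{a+b}(z(u)-z(v))+\tfrac{b}{a+b}(z(f)-z(v))\bigr]$ exhibits the bracketed point as lying on the translate of the segment $[z(u),z(f)]$, so $|z-z(v)|\geq (a+b)\,d\bigl(z(v),[z(u),z(f)]\bigr)$, and this distance is at least $r(v)$. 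Indeed, placing $z(v)$ at the origin and $z(u)$ at $(r(v)+r(u),0)$, the tangency relation $|z(u)-z(v)|=r(v)+r(u)$ together with the two orthogonality relations $|z(f)-z(v)|^2=r(v)^2+r(f)^2$ and $|z(f)-z(u)|^2=r(u)^2+r(f)^2$ forces $z(f)=(r(v),\pm r(f))$, so the entire side $[z(u),z(f)]$ lies in the half-plane $\{x\geq r(v)\}$. This gives $a+b\leq |z-z(v)|/r(v)\leq\delta$ exactly, and the same half-plane observation (applied to each triangle around $v$) is what justifies that $P_\delta(v)$ is covered by the triangles incident to $z(v)$ --- a point your remark ``$\delta\leq 1\leq 1/\delta_0$'' does not quite establish.
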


\begin{proof}
The first inequality of item $1$ is immediate from the definition of $\mathsf{A}[\phi]$, while the second follows since
\begin{multline*}
\sup \left\{ |\phi(u)-\phi(v)| : \text{$u$ and $v$ share a face of $M$}\right\}  \leq \sup_{f\in F} \sum_{e \in E^\rightarrow : e^\ell = f} |\phi(e^+)-\phi(e^-)|
\\\preceq \sup_{e\in E^\rightarrow} |\phi(e^+)-\phi(e^-)| \preceq \sqrt{\cE(\phi)}.
\end{multline*}
Item $2$ follows by taking $\Phi: B(z_0,r)\to \C$ to be holomorphic with real part $H$ and applying the inequality of \cite[Theorem 1.2.1]{MR3185375} to the function $\Psi:\D\to \C$ defined by $\Psi(z)=\Phi((z_0+z)/\alpha r)$. (Note that their definition of the energy of $\Psi$ disagrees with ours by a factor of $\pi$.)
\end{proof}

\begin{proof}[Proof of \cref{lem:CapComparison}]
We start with the upper bound.
Let $\phi\in \bD_0(M)$ be such that $\phi|_A \geq 1$, and let $\psi = (\phi \wedge 1)\vee 0$. It is easily verified that $\cE(\psi) \leq \cE(\phi)$ and $\psi |_A = 1$, and it follows from \cref{lem:D0char} that $\psi\in \bD_0(M)$ (this is also easy to verify directly).  \cref{lem:continuity} implies that $\mathsf{A}[\psi](z) \geq 1- \delta$ for every $z\in P_\delta(A)$. Thus, by \cref{lem:d0tod0easy}, we have that $2(1-\delta)^{-1}\mathsf{A}[\psi] \in \bD_0(D)$ and that $2(1-\delta)^{-1}\mathsf{A}[\psi] \geq 1$ on an open neighbourhood of $P_\delta(A)$, so that, by Dirichlet's principle and \cref{lem:AandRenergy},
\[\Cap(P_\delta(A)) \leq \cE(2(1-\delta)^{-1}\mathsf{A}[\psi]) \preceq \cE(\psi) \leq \cE(\phi). \] 
The claimed upper bound follows by taking the infimum over $\phi$.

We now turn to the lower bound. 
Let $\Phi\in \bD_0(D)$ be such that 
$\Phi \geq 1$ on an open neighbourhood of $P_\delta(A)$,
 and let $\Psi=(\Phi \wedge 1) \vee 0$. As before, we have that $\cE(\Psi)\leq \cE(\Phi)$ and that $\Psi = 1$ on an open neighbourhood of $A$. For every $v\in A$ we have that
 \[
\mathsf{R}[\Psi](v) = \frac{1}{\pi \delta_0^2 r(v)^2} \int_{P_{\delta_0}(v)} \Psi(z) \dif z
\geq \frac{1}{\pi \delta_0^2 r(v)^2} \int_{P_{\delta_0}(v)} \mathbf{1}\left[z\in P_\delta(v)\right] \dif z
= \frac{\delta^2}{\delta_0^2}.
 \]
 Thus, by \cref{lem:d0tod0easy}, the function $\delta_0^2\mathsf{R}[\Psi]/\delta^2 \in \bD_0(M)$ is at least $1$ on $A$, and so, by Dirichlet's principle and \cref{lem:AandRenergy},
 \[
\Cap(A) \leq \cE\left(\frac{\delta_0^2}{\delta^2}\mathsf{R}[\Psi]\right) \preceq \delta^{-4} \cE(\mathsf{R}[\Psi])\preceq \delta^{-4}\cE(\Psi) \leq \delta^{-4} \cE(\Phi).
 \] 
 The claimed lower bound follows by taking the infimum over $\Phi$.
\end{proof}

There is one more lemma to prove before we prove \cref{thm:isomorphismgeneral}.

\begin{lemma}\hspace{1cm}
\label{lem:RandAsendD0toD0}
\begin{enumerate}
	\item If $\phi \in \bD(M)$, then $\phi-\mathsf{R}[\mathsf{A}[\phi]]\in \bD_0(M)$.
	\item
If $\phi\in \bD(M)$, then $\mathsf{A}[\phi] \in \bD_0(D)$ if and only if $\phi\in \bD_0(M)$. 
\item
If $\Phi\in \bD(D)$, then $\mathsf{R}[\Phi] \in \bD_0(M)$ if and only if $\Phi\in \bD_0(D)$.
\end{enumerate}
\end{lemma}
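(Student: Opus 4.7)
My plan is to establish part 1 via a pointwise estimate together with the capacity characterisation of $\bD_0(M)$ from \cref{lem:D0char}, deduce part 2 as a formal consequence of part 1 and \cref{lem:d0tod0easy}, and reduce part 3 to an auxiliary claim about harmonic Dirichlet functions on $D$ that will be handled using the continuous Ancona--Lyons--Peres theorem together with an interpolation error bound.

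For part 1, item 1 of \cref{lem:continuity} applied with $\delta=\delta_0$ and integrated over $P_{\delta_0}(v)$ gives the pointwise bound
\[
|\phi(v)-\mathsf{R}[\mathsf{A}[\phi]](v)| \leq \delta_0\, V_\phi(v),
\]
where $V_\phi(v) := \sup\{|\phi(u)-\phi(v)|: u \text{ shares a face with } v\}$. The Cauchy--Schwarz argument from \eqref{eq:EAsecondterm}, together with the bounded codegree and local geometry of $M$, yields $\sum_v V_\phi(v)^2 \preceq \cE(\phi) < \infty$, so $\{v : V_\phi(v) \geq \eps\}$ is finite for every $\eps > 0$. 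Consequently the set $\{v: |\phi(v)-\mathsf{R}[\mathsf{A}[\phi]](v)|\geq \eps\}$ is finite and hence of finite capacity, so $\phi - \mathsf{R}[\mathsf{A}[\phi]]\in \bD_0(M)$ by item 1 of \cref{lem:D0char}. Part 2 now follows immediately: the ``if'' direction is \cref{lem:d0tod0easy}, and for the converse, $\mathsf{A}[\phi]\in\bD_0(D)$ implies $\mathsf{R}[\mathsf{A}[\phi]]\in\bD_0(M)$ by the same corollary, so $\phi=(\phi-\mathsf{R}[\mathsf{A}[\phi]])+\mathsf{R}[\mathsf{A}[\phi]]\in\bD_0(M)$ by part 1.

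For part 3, the ``if'' direction is \cref{lem:d0tod0easy}. For ``only if'', apply the Royden decomposition to write $\Phi=\Phi_0+H$ with $\Phi_0\in\bD_0(D)$ and $H\in\HD(D)$. Since $\mathsf{R}[\Phi_0]\in\bD_0(M)$ by \cref{lem:d0tod0easy} and $\mathsf{R}[H]=H\circ z$ by harmonicity, the hypothesis $\mathsf{R}[\Phi]\in\bD_0(M)$ forces $H\circ z \in \bD_0(M)$, and the problem reduces to showing that any such $H$ must be zero. I will prove the auxiliary claim that $\mathsf{A}[H\circ z]-H\in\bD_0(D)$ for every $H\in\HD(D)$; combined with $\mathsf{A}[H\circ z]\in\bD_0(D)$ (from \cref{lem:d0tod0easy}), this gives $H \in \bD_0(D)\cap\HD(D)=\{0\}$ by the uniqueness of the Royden decomposition.

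The auxiliary claim will be established via the continuous Ancona--Lyons--Peres theorem together with \eqref{eq:ALPcontinuouslimit}--\eqref{eq:ALPcontinuousidentification}: it suffices to show that $\lim_{t\uparrow T_{\partial D}}(\mathsf{A}[H\circ z](B_t)-H(B_t))=0$ almost surely. For $B_t$ in a triangle $T_t$ of the good embedding with diameter $r_t$, the function $\mathsf{A}[H\circ z]$ agrees with the exact affine interpolation of $H$ at the two vertex corners of $T_t$ and differs from $H$ at its face-centre corner by an averaging error of order $O(r_t \sup_{T'_t} |\nabla H|)$, where $T'_t$ is a slight enlargement controlled by the Ring Lemma. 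Combining this with the standard affine interpolation error estimate yields
\[
|\mathsf{A}[H\circ z](B_t) - H(B_t)| \preceq r_t \sup_{T'_t} |\nabla H|.
\]
Since $|\nabla H|^2$ is subharmonic, the sub-mean-value property gives $\sup_{T'_t}|\nabla H|^2 \preceq r_t^{-2}\int_{T''_t} |\nabla H|^2$ on a further enlargement $T''_t\subset D$. Since $r_t\to 0$ and $|T''_t|\to 0$ as $B_t \to \partial D$, the displayed bound tends to $0$ by the local integrability of $|\nabla H|^2$ and the finiteness of $\cE(H)$. The main obstacle is the geometric verification that a $O(r_t)$-neighbourhood of $T_t$ sits inside $D$ uniformly in $t$, which is needed to apply the harmonic estimate; this follows from the Ring Lemma together with the observation that each disc of the packing lies in $D$ and its radius therefore lower-bounds the distance from its centre to $\partial D$.
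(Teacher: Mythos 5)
Your proof is correct, and while part 1 coincides with the paper's argument (the same pointwise bound via \cref{lem:continuity} followed by the Cauchy--Schwarz summation and \cref{lem:D0char}), parts 2 and 3 take genuinely different routes. For the `only if' direction of part 2, the paper argues directly with capacities, using \cref{lem:continuity} to pass from $\{|\phi|\geq\eps\}$ to a sublevel set of $\mathsf{A}[\phi]$ and then invoking \cref{lem:CapComparison}; your deduction $\phi=(\phi-\mathsf{R}[\mathsf{A}[\phi]])+\mathsf{R}[\mathsf{A}[\phi]]$ from part 1 and \cref{lem:d0tod0easy} is shorter and avoids \cref{lem:CapComparison} entirely. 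For part 3 both arguments make the same initial reduction (via the Royden decomposition and $\mathsf{R}[H]=H\circ z$) to showing that $H\in\HD(D)$ with $H\circ z\in\bD_0(M)$ must vanish, but then diverge: the paper proves a capacity comparison in the reverse direction to \cref{lem:CapComparison}, namely $\Cap(\{|H|\geq\eps\})\preceq\Cap(K)+\Cap(\{v:|H\circ z(v)|\geq\eps/4\})$, using the maximum principle on the discs, the Ring Lemma, and the gradient estimate of \cref{lem:continuity}; you instead prove the continuous companion to part 1, namely $\mathsf{A}[H\circ z]-H\in\bD_0(D)$ for all $H\in\HD(D)$, via the Ancona--Lyons--Peres identification \eqref{eq:ALPcontinuousidentification} and an affine interpolation error bound $|\mathsf{A}[H\circ z]-H|\preceq r\sup|\nabla H|$ on each triangle, with $\sup|\nabla H|^2$ controlled by the sub-mean-value property on an $O(r)$-enlargement that the Ring Lemma keeps inside $D$, and the resulting local energies tending to zero by absolute continuity of $\|\nabla H\|^2\,\dif z$ as the Brownian motion approaches $\partial D$. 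Your version has the aesthetic advantage of making the statement symmetric between $M$ and $D$ ($\phi-\mathsf{R}\mathsf{A}\phi\in\bD_0(M)$ and, for harmonic $\Phi$, $\mathsf{A}\mathsf{R}\Phi-\Phi\in\bD_0(D)$) and of bypassing the maximum-principle/finiteness argument for $A_{\eps,2}\setminus A_{\eps,1}$; the paper's version stays entirely within the capacity framework of \cref{lem:D0char} and does not need the probabilistic boundary-limit machinery in the continuum. Both are complete; the only points you flagged as needing verification (that an $O(r_t)$-neighbourhood of each triangle lies in $D$, and that the triangles exhaust $D$) are exactly the facts the paper also uses, via the Ring Lemma and the definition of the carrier.
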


\begin{proof}[Proof of \cref{lem:RandAsendD0toD0}]
We begin with item $1$. Observe that, by the definitions of $\mathsf{R}$ and $\mathsf{A}$, we have that
\[\big|\phi(v)-\mathsf{R}[\mathsf{A}[\phi]](v)\big| \leq \sup \left\{|\phi(v)-\phi(u)| : u \text{ shares a face with $v$}\right\}\]
for every vertex $v\in V$. 
It follows by a straightforward argument with the Cauchy-Schwarz inequality, similar to that used in the proof of \cref{lem:AandRenergy}, that 
\[\sum_{v\in V} \big|\phi(v)-\mathsf{R}[\mathsf{A}[\phi]](v)\big|^2 \preceq  \cE(\phi), \]
and hence that, for each $\eps>0$, 
\[
\Cap\Big(\big\{ v \in V : \big|\phi(v)-\mathsf{R}[\mathsf{A}[f]](v)\big| \geq \eps \big\}\Big) \preceq \Big|\big\{ v \in V : \big|\phi(v)-\mathsf{R}[\mathsf{A}[\phi]](v)\big| \geq \eps \big\}\Big|
\preceq  \cE(\phi) \eps^{-2}.
\]
The right hand side is finite for every $\eps>0$, and so we conclude by applying \cref{lem:D0char}.

We now turn to items $2$ and $3$. 
The `if' parts of the statements are covered by \cref{lem:d0tod0easy}; 
It remains to prove only the `only if' parts of the statements.
% Using the Royden decomposition, and since $\BHD(M)$ is dense in $\HD(M)$ and $\BHD(D)$ is dense in $\HD(D)$, it suffices to consider the case that $\phi$ or $\Phi$ is bounded and harmonic.
 We begin with item 2.
 Let $\phi\in \bD(M)$ be such that $\mathsf{A}[\phi] \in \bD_0(D)$ and let $\eps>0$. It follows from \cref{lem:continuity} that there exists a constant $\delta=\delta(\eps,\cE(\phi),\bM(M))$ such that
\[
\{v\in V: |\phi(v)| \geq \eps \} \subseteq \left\{v\in V: |\mathsf{A}[\phi](z)| \geq \frac{\eps}{2} \text{ for all $z\in P_\delta(v)$} \right\},
\] 
and it follows from \cref{lem:CapComparison} that 
there exists a constant $C=C(\eps,\cE(\phi),\bM(M))$ such that
\[
\Cap\left(\left\{v\in V: |\phi(v)|\geq \eps\right\}\right) \leq C\, \Cap\left(\left\{z\in D : |\mathsf{A}[\phi](v)|\geq \frac{\eps}{2} \right\}\right).
\]
Here we have used the fact that if $A \subseteq B$ then $\Cap(A)\leq \Cap(B)$, which is an immediate consequence of the Dirichlet principle. 
 \cref{lem:D0char} and the assumption that $\mathsf{A}[\phi]\in \bD_0(D)$ implies that the right hand side is finite, so that the left hand side is finite also. Since $\eps>0$ was arbitrary, applying \cref{lem:D0char} a second time shows that $\phi \in \bD_0(M)$ as claimed.

It remains to prove item 3. We begin by proving that for every $H\in \HD(D)$ and $\eps>0$ there exists a compact set $K \subset D$ such that
\begin{equation}
\label{eq:movedclaim}
\Cap\bigl(\{z\in D : |H(z)| \geq \eps \}\bigr)\preceq \Cap(K) + \Cap\left[ \left\{ v \in V,\, |H\circ z(u)| \geq \eps/4 \right\}\right].
\end{equation}
For each $v\in V$, define $\mathrm{Fl}(v)$ to be the union of the disc $P(v)$ with all of the discs $P^\dagger(f)$ where $f$ is a face of $M$ incident to $v$, and let $N(v)$ be the set of all vertices of $M$ that share a face with $v$. 
Let $H\in \HD(D)$ and let $\eps>0$. 
Observe that
\begin{align*}
\{z\in D : |H(z)|\geq \eps \} &\subseteq \bigcup \left\{ P(v) : v\in V,\, \sup\{|H(z)| : z \in P(v) \} \geq \eps \right\} \\ &\hspace{3.8cm}\cup\, \bigcup \left\{ P^\dagger(f) : f\in F,\, \sup\{|H(z)| : z \in P^\dagger(f) \}  \geq \eps\right\} 
\\
&\subseteq 
\left\{ \mathrm{Fl}(v) : v\in V,\, \sup\left\{|H(z)| : z \in P(v) \right\} \geq \eps \right\},
\end{align*}
where the second inclusion follows from the maximum principle.
 Define the sets
$A_{\eps,1} = \{v\in V : |H\circ z(v)| \geq \eps/2 \}$ 
and
\[
A_{\eps,2} =
\left\{v\in V : \sup\bigl\{ |H(z)|: z \in P(v) \bigr\} \geq \eps\right\}.
\]
Clearly $A_{\eps,1} \subseteq A_{\eps,2}$. We claim that $A_{\eps,2} \setminus A_{\eps,1}$ is finite. Indeed, suppose for contradiction that $A_{\eps,2} \setminus A_{\eps,1}$ is infinite. It follows from the Ring Lemma that there exists a constant $C>1$ such that $B(z(v),Cr(v)) \subseteq D$ for every $v\in V$, and since the point set $\{z(v):v \in V\}$ is locally finite in $D$, we can find an infinite set $A_{\eps,3} \subseteq A_{\eps,2} \setminus A_{\eps,1}$ such that the balls $B(z(v),Cr(v))$ and $B(z(u),Cr(u))$ are disjoint whenever $u,v\in A_{\eps,3}$ are distinct. Applying item 2 of \cref{lem:continuity} we obtain that
\[
\cE(H) \geq \sum_{v\in A_{\eps,3}} \int_{B(z(v),Cr(v))} \|\nabla H(z)\|^2 \dif z \succeq \sum_{v\in A_{\eps,3}} \eps^2 = \infty,
\]
contradicting the assumption that $H\in \HD(D)$. It follows that if  $H\in \HD(D)$ then
\[
\{z\in D : |H(z)| \geq \eps \}
\subseteq K' \cup \bigcup \left\{\mathrm{Fl}(v) : v \in V,\, |H\circ z(v)| \geq \eps/2 \right\}
\]
where $K' \subset D$ is compact. 
% In particular, it follows that
% \begin{multline*}
% \Cap\bigl(\{z\in D : |H(z)| \geq \eps \}\bigr)
% \leq \Cap(K) +\Cap\left[ \bigcup \left\{\mathrm{Fl}(v) : v \in V,\, |H\circ z(v)| \geq \eps/2 \right\}\right]\\
% \leq \Cap(K) + \Cap\left[ \bigcup \left\{P(v) : v \in V,\, |H\circ z(u)| \geq \eps/2 \text{ for some $u\in N(v)$} \right\}\right].
% \end{multline*}
Now, since $H \circ z \in \bD(M)$ by \cref{lem:AandRenergy}, it follows by similar reasoning to above that $\{v\in V : |H \circ z(u)|\geq \eps/2$ for some $u\in N(v)\} \setminus
\{v\in V : H\circ z(u) \geq \eps/4$ for every $u\in \{v\}\cup N(v)\}$ is finite, and it follows that there exists a compact set $K \subset D$ such that
\begin{multline*}
\{z\in D : |H(z)| \geq \eps \}
\subseteq K \cup \bigcup \left\{\mathrm{Fl}(v) : v \in V,\, |H\circ z(u)| \geq \eps/4 \text{ for every } u\in \{v\}\cup N(v) \right\}
\end{multline*}
Now suppose that $\psi \in \bD_0$ is such that $\psi \geq 1$ on the set $\{ v \in V : |H \circ z(v)| \geq \eps/4\}$. Then we clearly have that $\mathsf{A}[\psi] \geq 1$ on the set 
$\bigcup \left\{\mathrm{Fl}(v) : v \in V,\, |H\circ z(u)| \geq \eps/4 \text{ for every $u\in \{v\}\cup N(v)$}\right\}$, and optimizing over $\psi$ it follows that
\begin{multline*}
% \label{eq:new}
\Cap\bigl(\{z\in D : |H(z)| \geq \eps \}\bigr)\\
\leq \Cap(K') + \Cap\left[ \bigcup \left\{\mathrm{Fl}(v) : v \in V,\, |H\circ z(u)| \geq \eps/4 \text{ for every $u\in \{v\}\cup N(v)$}\right\}\right]
\\\preceq \Cap(K) + \Cap\left[ \left\{ v \in V,\, |H\circ z(u)| \geq \eps/4 \right\}\right]
\end{multline*}
as claimed. 

Now let $\Phi= \Phi_0 + \Phi_{\mathrm{HD}} \in \bD(D)$ and suppose that $R[\Phi] \in \bD_0(M)$. We have by \cref{lem:d0tod0easy} that $R[\Phi_0] \in \bD_0(M)$, and it follows that $R[\Phi_\mathrm{HD}] = \Phi_\mathrm{HD}\circ z = R[\Phi]-R[\Phi_0] \in \bD_0(M)$ also. Let $\eps>0$. Then we have by \eqref{eq:movedclaim} and \cref{lem:D0char} that there exists a compact subset $K$ of $D$ such that 
\begin{equation*}
\Cap\bigl(\{z\in D : |\Phi_\mathrm{HD}(z)| \geq \eps \}\bigr)
\leq \Cap(K) + \Cap\left[\left\{ v \in V,\, |\Phi_\mathrm{HD}\circ z(v)| \geq \eps/4 \right\}\right] <\infty
\end{equation*}
where we have used the fact that compact subsets of transient domains have finite capacity. 
Since $\eps>0$ was arbitrary it follows from \cref{lem:D0char} that $\Phi_\mathrm{HD}\in \bD_0(D)$, and hence that $\Phi_\mathrm{HD}\equiv 0$ by uniqueness of the Royden decomposition. Thus, $\Phi\in \bD_0(D)$ as claimed. \qedhere

% Let $\eps>0$. By \cref{lem:continuity} there exists a constant $\delta=\delta(\eps,\sup_{z\in D}|\Phi_\mathrm{HD}(z)|,\bM(M))>0$ such that 
% \[
%  \left\{v\in V: |H(z)| \geq \frac{\eps}{2} \text{ for all $z\in P_\delta(v)$} \right\} 
%  \subseteq \{v\in V: |\Phi_\mathrm{HD} \circ z (v)| \geq \eps \}
% \] 
% It follows from \cref{lem:CapComparison} that there exists a constant $C=C(\eps,\sup_{z\in D}|\Phi_\mathrm{HD}(z)|,\bM(M))$ such that
% \[
% \Cap\left(\left\{v\in V: |\Phi_\mathrm{HD}\circ z(v)|\geq \eps\right\}\right)\leq C\, \Cap\left(\left\{z\in D : |\Phi_\mathrm{HD}(z)|\geq \frac{\eps}{2} \right\}\right),
% \]
% and we conclude by applying \cref{lem:D0char} as before. 
\end{proof}

% Once \cref{lem:D0char,lem:AandRenergy,lem:RandAsendD0toD0} are in place, \cref{thm:isomorphismgeneral} follows immediately from the following highly elementary piece of linear algebra, the proof of which is left as an exercise.

% \begin{prop}
% \label{prop:linearalgebra}
% Let $U=U_1\oplus U_2$ and $V=V_1\oplus V_2$ be vector spaces each written as direct sums of two subspaces, and let $\mathsf{P}_{U_1}$, $\mathsf{P}_{U_2}$, $\mathsf{P}_{V_1}$, and $\mathsf{P}_{V_2}$ be the associated projections.
%  Let $\mathsf{S}:U\to V$ and $\mathsf{T}:V\to U$ be linear transformations with the following properties:
% \begin{enumerate}
% 	\item
% 	 $x-\mathsf{R}\left[\mathsf{A}[x]\right]\in V_1$ for every $x\in V$.
% 	\item
% 	If $y\in U$, then $y\in U_1$ if and only if $\mathsf{R}[y]\in V_1$. 
% 	\item 
% 	If $x \in V$, then $x\in V_1$ if and only if $\mathsf{A}[x]\in U_1$.
% \end{enumerate}
% Then the following hold. 
% \begin{enumerate}
% 	\item For each $y \in U_2$, $x=\mathsf{P}_{V_2} \mathsf{R} [y]$ is the unique element of $V_2$ such that $\mathsf{R} y - x \in U_1$. 
% 	\item For each $x \in V_2$, $y=\mathsf{P}_{U_2} \mathsf{A} [x]$ is the unique element of $U_2$ such that $x - \mathsf{R} y \in V_1$. 
% 	\item The restrictions 
%  $\mathsf{P}_{V_2} \mathsf{R} : U_2 \to V_2$ and $\mathsf{P}_{U_2} \mathsf{A} : V_2 \to U_2$ are mutually inverse linear isomorphisms. 
% \end{enumerate}
% \end{prop}

We are now ready to prove \cref{thm:isomorphismgeneral}. 

\begin{proof}[Proof of \cref{thm:isomorphismgeneral}.]
As discussed after the proof of \cref{lem:AandRenergy}, \cref{lem:AandRenergy} implies that $\Disc$ and $\Cont$ are both bounded. 
Thus, it suffices to prove the following:
\begin{enumerate}
  % [leftmargin=*]
  \item For each $H \in \HD(D)$, $h=\Disc [H]=(\mathsf{R}[H])_\mathrm{HD}$ is the unique element of $\HD(M)$ such that $\mathsf{R}[H] - h \in \bD_0(M)$.
  \item For each $h \in \HD(M)$, $H=\Cont [h]$ is the unique element of $\HD(D)$ such that $h - \mathsf{R}[H] \in \bD_0(M)$. 
  \item $h=\Disc[\Cont[h]]$ and $H=\Cont[\Disc[H]]$ for every $h\in \HD(M)$ and $H\in \HD(D)$ respectively.
\end{enumerate}
Each of these items has a highly elementary but slightly tricky proof. Let $\mathsf{P}_{\bD_0(M)},$ $\mathsf{P}_{\HD(M)}$, $\mathsf{P}_{\bD_0(D)},$ and $\mathsf{P}_{\HD(D)}$  be the projections associated to the Royden decompositions of $\bD(M)$ and $\bD(D)$ respectively. 
\begin{enumerate}
  % [leftmargin=*]
  \item This follows immediately from the uniqueness of the Royden decomposition (i.e., the fact that $\bD(D)=\bD_0(D)\oplus \HD(D)$).
  \item
We first wish to prove that $h-\mathsf{R}\Cont[h] = h - \mathsf{R} \mathsf{P}_{\HD(D)} \mathsf{A} h \in \bD_0(M)$ for every $h\in \bD(M)$. To see this, note that
% \[
$h-\mathsf{R}\mathsf{P}_{\HD(D)} \mathsf{A} h  = \left[h- \mathsf{R}\mathsf{A}h\right] + \mathsf{R} \mathsf{P}_{\bD_0(D)} \mathsf{A} h$.
% \mathsf{A} \left[h - \mathsf{R} \mathsf{P}_{\HD(D)} \mathsf{A} h \right] = 
 % \left[1- \mathsf{A}\mathsf{R}\mathsf{P}_{\HD(D)}  \right]  \mathsf{A} h
% = 
% \left[1- \mathsf{A}\mathsf{R} + \mathsf{A}\mathsf{R} \mathsf{P}_{\bD_0(D)}  \right]  \mathsf{A} h 
% =
% \mathsf{A}\left[1- \mathsf{R}\mathsf{A}\right]  h + \mathsf{A}\mathsf{R} \mathsf{P}_{\bD_0(D)}  \mathsf{A} h.
 % \] 
 Since $h-\mathsf{R}\mathsf{A}h \in \bD_0(M)$ by item 1 of \cref{lem:RandAsendD0toD0} and $\mathsf{R} \mathsf{P}_{\bD_0(D)} \mathsf{A} h \in \bD_0(M)$ by \cref{lem:d0tod0easy}, we deduce that $h-\mathsf{R}\Cont[h] \in \bD_0(M)$ as claimed. 
% The first term on the right is in $\bD_0(D)$ by item 1 of \cref{lem:RandAsendD0toD0}, while the second term is in $\bD_0(D)$ by \cref{lem:d0tod0easy}. Thus, it follows by applying \cref{lem:RandAsendD0toD0} again (more specifically the `only if' implication of item 2 of that lemma) that $h-\mathsf{R}\Cont[h] \in \bD_0(M)$ as claimed.

We now prove uniqueness. Suppose that $H\in \HD(D)$ is such that $h-\mathsf{R}[H]$ is in $\bD_0(M)$. Then we must have that $\mathsf{R}\left[\Cont[h]-H\right] = (h-\mathsf{R}[H]) - (h-\mathsf{R}[\Cont[h]])$ is in $\bD_0(M)$ also, and it follows from \cref{lem:RandAsendD0toD0} (more specifically the `only if' implication of item 3 of that lemma) that $\Cont[h]-H \in \bD_0(D)$. But since $\Cont[h]-H \in \HD(D)$ we deduce that $H=\Cont[h]$ as claimed.

\item We first prove that $h=\Disc[\Cont[h]]$ for every $h\in \HD(M)$. We have that $h-\Disc[\Cont[h]] =h- \mathsf{R} \Cont [h] + \mathsf{P}_{\bD_0(M)} \mathsf{R} \Cont [h]$, and since, by item 2, $h- \mathsf{R} \Cont [h]$ and $\mathsf{P}_{\bD_0} \mathsf{R} \Cont [h]$ are both in $\bD_0(M)$, it follows that $h-\Disc[\Cont[h]] \in \bD_0(M)$ and hence that $h-\Disc[\Cont[h]]=0$ as claimed. 

It remains to prove that $H=\Cont[\Disc[H]]$ for every $H\in\HD(D)$. By item 2 we have that 
$\Disc[H] - \mathsf{R} \Cont[\Disc[H]]  \in \bD_0(M)$, and hence that
\[\mathsf{R}\bigl[H -  \Cont[\Disc[H]] \bigr]= \mathsf{P}_{\bD_0(M)}\mathsf{R} [H] + \Disc[H] - \mathsf{R} \Cont[\Disc[H]] \in \bD_0(M)\]also. It follows by \cref{lem:RandAsendD0toD0} that $H -  \Cont[\Disc[H]]\in \bD_0(D)$ and hence that $H -  \Cont[\Disc[H]]=0$ as claimed. \qedhere
\end{enumerate}
\end{proof}

\subsection{Asymptotic equality in the uniformly transient case}

We now prove the following proposition, which, together with Proposition \ref{lem:D0char}, allows us to deduce \cref{thm:isomorphismdisc} from \cref{thm:isomorphismgeneral}.

\begin{prop}
\label{prop:uniformlytransient}
Let $M$ be a transient weighted polyhedral planar map with bounded codegrees and bounded local geometry, let $(P,P^\dagger)$ be a double circle packing of $M$ in a domain $D \subset \C$, and let $z:V\to D$ be the function sending each circle to the centre of its corresponding disc. 
Let $h$ and $H$ be bounded harmonic functions on $M$ and $D$ respectively. If $D$ is uniformly transient, then $h$ and $H\circ z$ are asymptotically equal if and only if they are quasi-asymptotically equal. 
\end{prop}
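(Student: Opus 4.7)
The forward direction (asymptotic implies quasi-asymptotic) is immediate since any finite subset of $V$ has finite capacity. I would prove the reverse by contrapositive: fix $\eps > 0$, set $A := \{v \in V : |h(v) - H(z(v))| \geq \eps\}$, assume $A$ is infinite, and aim to show $\Cap(A) = \infty$ in $M$, contradicting the quasi-asymptotic equality hypothesis.

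The strategy will be to transfer the question from $M$ to $D$ using \cref{lem:CapComparison} and then exploit uniform transience. Since the discs $\{P(v)\}$ are locally finite in $D$, an infinite $A$ must accumulate on $\partial D$, so $z(A)$ is unbounded in the hyperbolic metric on $D$. I would then inductively extract an infinite subsequence $v_1, v_2, \ldots \in A$ whose images are pairwise at hyperbolic distance at least $C_0$ in $D$, for a constant $C_0$ to be chosen.

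The key geometric input is that, for a double circle packing of bounded local geometry in a uniformly transient domain, $r(v) \asymp d(z(v), \partial D)$ uniformly in $v$: the upper bound $r(v) \leq d(z(v), \partial D)$ is immediate from $P(v) \subseteq D$, while the matching lower bound should follow from a packing-theoretic argument using uniform transience and the Ring Lemma to rule out discs that are arbitrarily small relative to their distance to the boundary. Granting this, uniform transience applied to the disc $P_{\delta_0}(v)$ (which then contains a ball of radius comparable to $d(z(v), \partial D)$) yields $\Cap(P_{\delta_0}(v)) \geq c_1 > 0$ uniformly in $v$. A standard strong-Markov argument for Brownian motion in a uniformly transient domain then provides approximate additivity of capacity over hyperbolically well-separated sets, so that for $C_0$ sufficiently large
\[
\Cap\Bigl(\bigcup_{i=1}^k P_{\delta_0}(v_i)\Bigr) \geq c_2 k
\]
for every $k$. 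Finally, \cref{lem:CapComparison} will transfer the bound back to $M$: $\Cap(A) \geq \Cap(\{v_1, \ldots, v_k\}) \succeq c_2 k$ for every $k$, so $\Cap(A) = \infty$, the desired contradiction.

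The main technical obstacle will be the geometric estimate $r(v) \asymp d(z(v), \partial D)$; this is the step where the uniform transience hypothesis truly enters. The remaining ingredients---local finiteness of the packing, hyperbolic separation, approximate additivity of capacity for well-separated sets in a uniformly transient domain, and the discrete-continuous capacity comparison---are standard or already in place.
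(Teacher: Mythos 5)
There is a genuine gap at the step you yourself identify as the main technical obstacle: the estimate $r(v)\asymp d(z(v),\partial D)$ is false in general, and uniform transience of $D$ cannot rescue it. Uniform transience is a property of the domain alone and says nothing about the packing; a bounded degree triangulation circle packed in the unit disc can contain large finite patches of nearly Euclidean combinatorics whose circles are arbitrarily small compared with their distance to $\partial\D$. Indeed, if $r(v)\succeq d(z(v),\partial D)$ held uniformly, then by \cref{lem:CapComparison} and uniform transience of $D$ every singleton $\{v\}$ would have capacity bounded below, i.e.\ $M$ itself would be uniformly transient --- but the paper explicitly notes that $M$ is \emph{not} required to be uniformly transient. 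A related warning sign is that your argument never uses the hypothesis that $h$ and $H$ are \emph{bounded}, whereas the proposition is only true for bounded harmonic functions.

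The idea your proof is missing is to fatten the bad set rather than the individual circles. The paper's argument runs as follows: since $h$ and $H$ are bounded and harmonic, the elliptic Harnack inequality (in its discrete form for double circle packings, from \cite{ABGN14}, and its classical continuous form) gives a $\delta>0$ such that whenever $|h(v)-H\circ z(v)|\geq\eps$, one has $|h(u)-H\circ z(u)|\geq\eps/2$ for \emph{every} vertex $u$ with $z(u)\in B(z(v),\delta d(z(v),\partial D))$. The set of all such $u$ corresponds, via \cref{lem:CapComparison}, to a region of $D$ comparable to a ball of radius proportional to $d(z(v),\partial D)$, which has capacity bounded below precisely by uniform transience of $D$ --- no lower bound on any single $r(v)$ is needed. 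One then extracts infinitely many disjoint such vertex sets and applies the approximate-additivity statement (your ``standard strong-Markov argument'', which is \cref{lem:muchmoredetail} in the paper) to conclude $\Cap(A_{\eps/2})=\infty$. Your overall architecture (contrapositive, separation, additivity, capacity transfer) matches the paper's, but without the Harnack fattening step the chain breaks at the point where a uniform capacity lower bound per bad vertex is required.
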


The proof of this proposition applies the elliptic Harnack inequality, which we now discuss. 
For each $z\in \C$ and $r>0$, let $B(z,r)$ denote the Euclidean ball of radius $r$ around $z$.
Recall the classical elliptic Harnack inequality for the plane, which states that for every $z_0\in \C$, every non-negative harmonic function $h: B(z_0,r) \to \R$, and every $z\in B(z_0,r)$, we have that
\begin{equation}
\label{eq:classicerEHI}
\frac{r-|z-z_0|}{r+|z-z_0|} h(z_0)
\leq
h(z)\leq \frac{r+|z-z_0|}{r-|z-z_0|} h(z_0).
\end{equation}
An immediate consequence of this inequality is that
\begin{equation}
\label{eq:classicerEHI2}
|h(z)-h(z_0)|\leq \frac{2|z-z_0|}{r-|z-z_0|} h(z_0)
\end{equation}
under the same assumptions. If $h:B(z_0,r)\to \R$ is a harmonic function that is not necessarily non-negative, we can apply this inequality to the normalized function $h-\inf_{z\in B(z_0,r)} h(z)$ to obtain that
\begin{multline}
\label{eq:classicEHI}
|h(z)-h(z_0)| \leq 
\frac{2|z-z_0|}{r-|z-z_0|} \bigl(h(z_0)-\inf_{z' \in B(z_0,r)} h(z')\bigr) 
\\
\leq 
\frac{2 |z-z_0|}{r-|z-z_0|}\sup\big\{|h(z_1)-h(z_2)| : z_1,z_2 \in B(z_0,r) \big\}.
\end{multline}

% For each $z\in \C$
Angel, Barlow, Gurel-Gurevich, and Nachmias \cite{ABGN14} established a version of the elliptic Harnack inequality that holds for double circle packings with respect to the Euclidean metric.
 The version of the theorem that we state here follows from that stated in \cite{ABGN14} by a simple rearrangement and iteration argument, below.

\begin{theorem}[Elliptic Harnack Inequality]
Let $M$ be a transient weighted polyhedral planar map with bounded codegrees and bounded local geometry, let $(P,P^\dagger)$ be a double circle packing of $M$ in a domain $D$. Then for each $\alpha<1$ there exist positive constants $\beta=\beta (\bM)$ and $C=C(\bM)$ such that 
\begin{equation}
\label{eq:DiscreteEHI}
 |h(u)-h(v)| \leq C \left(\frac{|z(u)-z(v)|}{r}\right)^\beta \sup\big\{
 |h(w_1)-h(w_2)| : z(w_1),z(w_2) \in B(z,r) \big\}
\end{equation}
for every harmonic function $h$ on $V$, every $v\in V$, every $r \leq d(z(v),\partial D)$, and every $u\in V$ with $z(u) \in B(z(v),\alpha r)$.
\end{theorem}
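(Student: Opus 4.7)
The plan is to derive the stated Hölder-type control on increments from the standard two-sided Harnack inequality of \cite{ABGN14} by the classical oscillation-decay argument of De Giorgi--Moser: apply EHI to $M-h$ and to $h-m$ on concentric balls to get a strictly-less-than-one contraction of oscillation across a scale of two, then iterate this contraction geometrically from scale $r$ down to the scale $|z(u)-z(v)|$. The starting input I would take from \cite{ABGN14} is the statement that there exists $K = K(\bM)$ such that whenever $B(z(v_0), 2s) \subseteq D$, every non-negative harmonic $g : V \to \R$ satisfies $\sup g \leq K \inf g$ over $\{w : z(w) \in B(z(v_0), s)\}$. All constants below depend only on $\bM$.

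First I would extract an oscillation-ratio inequality. For a harmonic $h$ on $V$ and $B(z(v), 2s) \subseteq D$, write $M_t$, $m_t$, $\omega(t) := M_t - m_t$ for the sup, inf and oscillation of $h$ on $\{w : z(w) \in B(z(v), t)\}$. Applying the Harnack inequality to $g = M_{2s} - h \geq 0$ on the ball of radius $s$ yields $M_{2s} - m_s \leq K(M_{2s} - M_s)$, and applying it to $g = h - m_{2s} \geq 0$ yields $M_s - m_{2s} \leq K(m_s - m_{2s})$. Adding these two inequalities and observing that their left side equals $\omega(2s) + \omega(s)$ while their right side equals $K\bigl(\omega(2s) - \omega(s)\bigr)$ gives
\[\omega(2s) + \omega(s) \leq K\bigl(\omega(2s) - \omega(s)\bigr),\]
which rearranges to $\omega(s) \leq \theta\,\omega(2s)$ with $\theta := (K-1)/(K+1) \in (0,1)$.

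Next I would iterate. Since $r \leq d(z(v), \partial D)$ gives $B(z(v), r) \subseteq D$, the contraction step applies at every scale $s = r/2, r/4, \ldots$, yielding $\omega(2^{-n} r) \leq \theta^n \omega(r)$ for every $n \geq 0$. For $u \in V$ with $z(u) \in B(z(v), \alpha r)$, set $n := \lfloor \log_2(r/|z(u)-z(v)|) \rfloor$; the hypothesis $\alpha < 1$ guarantees $n \geq 0$, and by construction $|z(u)-z(v)| \leq 2^{-n} r < 2|z(u)-z(v)|$. Both $z(u)$ and $z(v)$ therefore lie in $B(z(v), 2^{-n} r)$, so
\[|h(u) - h(v)| \leq \omega(2^{-n} r) \leq \theta^n \omega(r) \leq \theta^{-1} \Bigl(\frac{|z(u)-z(v)|}{r}\Bigr)^{\beta} \omega(r), \qquad \beta := \log_2(1/\theta) > 0,\]
which is exactly \eqref{eq:DiscreteEHI} with $C = 1/\theta$, since $\omega(r)$ is bounded by the supremum appearing on the right-hand side of \eqref{eq:DiscreteEHI}.

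There is no substantive obstacle here; once the ABGN14 Harnack inequality is assumed, the entire argument is formal. The only bookkeeping I would be careful about is that every ball $B(z(v), 2^{-k} r)$ used as the outer ball at iteration step $k$ lies in $D$ (immediate from $r \leq d(z(v), \partial D)$), and that the starting index $n$ is non-negative, which is precisely the role of the hypothesis $\alpha < 1$ (any slack $\alpha < 1$ suffices, with the deficit absorbed into $C$).
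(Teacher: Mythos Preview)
Your proposal is correct and follows essentially the same approach as the paper: both derive the H\"older estimate from the Harnack inequality of \cite{ABGN14} via the standard oscillation-decay iteration. The only cosmetic difference is that you apply Harnack symmetrically to both $M_{2s}-h$ and $h-m_{2s}$ and add (obtaining contraction ratio $(K-1)/(K+1)$), whereas the paper applies it once to $h-\inf_{B(Ar)}h$ and rearranges algebraically (obtaining contraction ratio $(C-1)/C$); the iteration from there is identical.
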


\begin{proof}
 Let $X$ be the union of the straight lines between the centres of circles in $P$. The Ring Lemma implies that the path metric on $X$ is comparable to the subspace metric on $X$ \cite[Proposition 2.5]{ABGN14}. Given a function $\phi$ on the vertex set of $M$, we extend $\phi$ to $X$ by linear interpolation along each edge.
The version of the elliptic Harnack inequality stated in \cite[Theorem 5.4]{ABGN14}
 implies that for each $A>1$, there exists a constant $C=C(A,\bM)>1$ such that for every $x \in X$ with $d(x,\partial D) \geq Ar$, and every harmonic function $h$ on $M$ such that the extension of $h$ to $X$ is positive on $B(x,Ar)$, we have that
 \begin{equation}
 \label{eq:ABGNEHI}
\sup_{y \in X \cap B(x,r)} h(y) \leq C \inf_{y\in X \cap B(x,r)} h(y).
 \end{equation}
Now suppose that $h$ is a harmonic function on $M$ that is not necessary positive. Write $B(r)=X\cap B(x,r)$. Applying this inequality to the normalized function $h(y)-\inf_{z \in B(Ar)}h(z)$, we deduce that
\[
\sup_{y \in B(r)} h(y)-\inf_{y \in B(Ar)} h(y) \leq C \left[ \inf_{y \in B(r)} h(y)-\inf_{y \in B(Ar)} h(y) \right].
\]
Adding $(C-1) \sup_{y \in B(r)} h(y) + \inf_{y\in B(Ar)} h(y) - C \inf_{y\in B(r)} h(y)$ to both sides of this inequality, we obtain that
\begin{align*}
C\left[\sup_{y \in B(r)} h(y) - \inf_{y \in B(r)} h(y)\right] &\leq (C-1) \sup_{y\in B(r)} h(y) - (C-1) \inf_{y \in B(Ar)} h(y)\\
&\leq (C-1)\left[ \sup_{y \in B(Ar)} h(y) - \inf_{y \in B(Ar)} h(y)\right].
\end{align*}
By applying this inequality for different values of $r$ we obtain that
\[
\sup_{y \in B(A^{-n}r)}h(y) - \inf_{y \in B(A^{-n}r)}h(y) \leq \left(\frac{C-1}{C}\right) \left[ \sup_{y \in B(A^{-n+1}r)} h(y) - \inf_{y \in B(A^{-n+1}r)} h(y) \right]
\]
for  every $n\geq 1$, every harmonic function $h$ on $M$, every $r>0$, every $n\geq 1$, and every $x\in X$ such that $d(x,\partial D) \geq r$. It follows by induction that
\[
\sup_{y \in B(A^{-n}r)}h(y) - \inf_{y \in B(A^{-n}r)}h(y) \leq \left(\frac{C-1}{C}\right)^n \left[ \sup_{y \in B(r)} h(y) - \inf_{y \in B(r)} h(y) \right]
\]
for  every harmonic function $h$ on $M$, every $r>0$, every $n\geq 1$, and every $x\in X$ such that $d(x,\partial D) \geq r$. This is easily seen to imply the claimed inequality
\end{proof}

The following lemma is presumably well-known to experts, but we were not able to find a reference.

\begin{lemma}
\label{lem:muchmoredetail}
Let $G$ be a transient network and suppose that $A$ is a set of vertices for which there exists $\eps>0$ and infinitely many disjoint sets $A_1,A_2,
\ldots \subseteq A$ such that $\Cap(A_i)\geq \eps$ for every $i\geq 1$. Then $\Cap(A)=\infty$.
\end{lemma}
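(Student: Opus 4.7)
The plan is to prove the contrapositive form directly: given the disjoint subsets $A_1,A_2,\ldots \subseteq A$ with $\Cap(A_i)\geq\eps$, I will show $\Cap(A)=\infty$. First, by the inner regularity $\Cap(A_i)=\sup\{\Cap(F):F\subseteq A_i \text{ finite}\}$, I may replace each $A_i$ by a finite subset of capacity $\geq\eps/2$, reducing to the case that each $A_i$ is finite; by monotonicity of capacity it then suffices to show that $\Cap(B_N)\to\infty$ where $B_N=\bigcup_{i=1}^N A_i$.

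The argument will proceed through the dual (measure-theoretic) characterization of capacity,
\[
\Cap(B)=\sup\bigl\{\mu(B):\mu\geq 0,\ \operatorname{supp}(\mu)\subseteq B,\ G\mu\leq 1 \text{ on }V\bigr\},
\]
where $G$ is the Green kernel of the transient network. For each $i$, the equilibrium measure $e_i$ on $A_i$ satisfies $e_i(A_i)=\Cap(A_i)$ and $Ge_i=\phi_{A_i}\leq 1$, with equality on $A_i$. Since each $A_i$ has finite capacity, $\phi_{A_i}\in\bD_0$, and \cref{lem:D0char} implies that the upper-level sets $\{v:\phi_{A_i}(v)\geq\delta\}$ have finite capacity for every $\delta>0$. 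The crux of the proof is to pass to an infinite subsequence $(A_{i_k})_{k\geq 1}$ for which cross-interactions are uniformly summable, i.e.\
\[
\sum_{l\neq k}\sup_{v\in A_{i_k}}\phi_{A_{i_l}}(v)\ \leq\ 1\qquad\text{for every }k.
\]
Given this, the measure $\mu_N=\sum_{k=1}^N e_{i_k}$ is supported on $B_N^\star:=\bigcup_{k=1}^N A_{i_k}\subseteq A$, has mass $\mu_N(B_N^\star)\geq N\eps/2$, and for every $v\in A_{i_k}$ we have $G\mu_N(v)=\phi_{A_{i_k}}(v)+\sum_{l\neq k}\phi_{A_{i_l}}(v)\leq 1+1=2$; by the maximum principle $G\mu_N\leq 2$ on all of $V$, so $\tfrac12\mu_N$ is admissible in the dual, giving $\Cap(B_N^\star)\geq N\eps/4\to\infty$ and hence $\Cap(A)=\infty$.

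The main obstacle is the subsequence extraction. I will construct $(i_k)$ inductively using a diagonal argument, at each stage choosing $i_k$ so that $\phi_{A_{i_j}}$ is extremely small on $A_{i_k}$ for every $j<k$ (``backward'' control) and also so that $\phi_{A_{i_k}}$ will in turn be small on each previously-chosen $A_{i_j}$ (``forward'' control). The backward direction uses two ingredients: the disjointness of the $A_i$ (which implies that any given finite vertex set intersects only finitely many of them, so the $A_i$ escape every finite subset of $V$), together with \cref{lem:D0char} applied to $\phi_{A_{i_j}}\in\bD_0$. The delicate point, where most of the work lies, is the forward control: it requires showing that for any fixed vertex $v$, $\phi_{A_i}(v)\to 0$ as $i\to\infty$. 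This pointwise decay is not immediate from transience alone, and I expect to establish it by combining transience with the disjointness of the $A_i$ and a compactness argument in the Hilbert space $\bD_0$ equipped with the energy inner product, exploiting the fact that the $\phi_{A_i}$ form a bounded family whose weak subsequential limits must vanish by the Ancona--Lyons--Peres identification of the limit along the random walk.
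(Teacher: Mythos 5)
Your overall strategy---pass to finite subsets, extract a subsequence with uniformly small cross-interactions, and sum the equilibrium measures to obtain an admissible measure of mass $\succeq N\eps$---is sound in outline, and the final step (dual characterisation of capacity plus the maximum principle for potentials) would indeed close the argument once the subsequence is produced. The genuine gap is exactly where you locate it: the ``forward'' control. The pointwise decay you need, namely $\bP_v\bigl(\text{hit }A_i\bigr)\to 0$ as $i\to\infty$ for each fixed $v$, is \emph{false} under the stated hypotheses. Take $G=\mathbb{Z}^3$, $v$ the origin, and $A_i$ the sphere of radius $i$ about $v$: the $A_i$ are disjoint, each has capacity bounded below by a positive constant, and yet $\bP_v\bigl(\text{hit }A_i\bigr)=1$ for every $i$. (The lemma's conclusion still holds there, but your intermediate claim does not, so no compactness or weak-limit argument in $\bD_0$ can establish it.) As a direct proof, the construction therefore cannot be completed.

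The repair is the reduction the paper makes at the outset: if the walk visits $A$ infinitely often with positive probability then $\Cap(A)=\infty$ already (a set of finite capacity carries a $\psi\in\bD_0$ with $\psi|_A\geq 1$, and $\psi(X_n)\to 0$ a.s.), so one may assume the walk visits $A$ only finitely often a.s. In that regime $\sum_i \mathbf{1}[\text{hit }A_i]<\infty$ a.s.\ by disjointness, whence $\bP_v\bigl(\text{hit }\bigcup_{i\geq j}A_i'\bigr)\to 0$ as $j\to\infty$ for each fixed $v$; this single tail estimate is the only input the paper's recursive selection of the subsequence uses. Note also that the paper sidesteps forward control entirely: working with the probabilistic formula $\Cap(B)=\sum_{v\in B}c(v)\bP_v(\tau_B^+=\infty)$, it reverses time to convert the terms in which a later block hits an earlier one into terms in which an earlier block hits the tail of later blocks, which the selection already controls. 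You can keep your equilibrium-measure formulation, but you must either import the same ``finitely often'' reduction (after which your pointwise decay does become true, by dominated convergence over the finitely many relevant starting points) or symmetrise the cross-terms by reversibility as the paper does.
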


\begin{proof}
First note that if $A$ has finite capacity then we must have that simple random walk on $G$ visits $A$ at most finitely often almost surely. Indeed, if $\Cap(A)<\infty$ then there exists $\psi \in \bD_0(G)$ with $\psi|_A\geq 1$, and it follows from \eqref{eq:ALPlimitidentification} that if $X$ is a random walk then $\psi(X_n)\to 0$ a.s.\ and hence that $X$ visits $A$ at most finitely often a.s.  Thus, it suffices to consider the case that the simple random walk visits $A$ at most finitely often almost surely. 

For each $i\geq 1$, there exists a finite set $A_i' \subseteq A_i$ such that $\Cap(A'_i) \geq \Cap(A_i)/2 \geq \eps/2$. We construct a subsequence 
$i_1,i_2,\ldots$ as follows. Let $i_1=1$. Since random walk visits $A$ at most finitely often almost surely, it follows that, 
% for every $n\geq 1$ and $\delta>0$ there exists $j$ such that $\bP_v($hit $\bigcup_{i\geq j} A'_i) \leq \delta$ for every $v\in \bigcup_{i=1}^n A'_i$. Thus, 
given $i_1,\ldots,i_m$, there exists $j$ such that 
\[\sum_{\ell=1}^m \sum_{v \in A_{i_\ell}'} c(v) \bP_v\Bigl(\text{hit }\bigcup_{i\geq j} A'_i\Bigr) \leq \eps/8\]
Set $i_m$ to be the minimal such $j$; this gives a recursive procedure to define the entire sequence $i_1,i_2,\ldots$.
By the Dirichlet principle we have that $\Cap(A) \geq \Cap\Bigl(\bigcup_{\ell=1}^m A_{i_\ell}'\Bigr)$ for each $m\geq 1$, and so it suffices to prove that
\begin{equation}
\label{eq:capacity_claim}
\Cap\Bigl(\bigcup_{\ell=1}^m A_{i_\ell}'\Bigr) \geq \frac{\eps m}{4}
\end{equation}
for every $m\geq 1$. 
To see this, we use the elementary bound
\begin{align*}
\Cap\Bigl(\bigcup_{\ell=1}^m A_{i_\ell}'\Bigr)  &= \sum_{\ell=1}^m \sum_{v \in A_{i_\ell}'} c(v)\mathbf{P}_v\Bigl(\text{do not return to $\bigcup_{\ell=1}^m A_{i_\ell}'$} \Bigr)\\
&\geq \sum_{\ell=1}^m \sum_{v\in  A_{i_\ell}'} c(v)\mathbf{P}_v\Bigl(\text{do not return to  $A_{i_\ell}'$} \Bigr) - \sum_{\ell=1}^m \sum_{v\in  A_{i_\ell}'} c(v) \mathbf{P}_v\Bigl(\text{hit $\bigcup_{k\geq \ell+1 }A'_{i_k}$} \Bigr) 
\\ &\hspace{4cm}-
\sum_{\ell=1}^m \sum_{v\in  A_{i_\ell}'} \sum_{r=1}^{\ell-1} \sum_{u \in A_{i_r}'}
c(v) \mathbf{P}_v\Bigl(\text{hit $u$, don't return to $\bigcup_{k\geq \ell} A'_{i_k}$} \Bigr),
\end{align*}
from which the bound
\[
\Cap\Bigl(\bigcup_{\ell=1}^m A_{i_\ell}'\Bigr)
\geq \frac{\eps m}{2} - \frac{\eps m}{8}  - \sum_{\ell=1}^m \sum_{v\in  A_{i_\ell}'} \sum_{r=1}^{l-1} \sum_{u \in A_{i_r}'}
c(v) \mathbf{P}_v\Bigl(\text{hit $u$, don't return to $\bigcup_{k \geq \ell} A'_{i_k}$} \Bigr)
\]
follows immediately. To control the final term, we reverse time to get that
\begin{multline*}
\Cap\Bigl(\bigcup_{\ell=1}^m A_{i_\ell}'\Bigr)
\geq  \frac{3\eps m}{8}  -  \sum_{r=1}^m \sum_{u \in A_{i_r}'} \sum_{\ell=r+1}^m \sum_{v\in  A_{i_\ell}'}
c(u) \mathbf{P}_u\Bigl(\text{hit $\bigcup_{k\geq \ell} A'_{i_k}$ for first time at $v$} \Bigr)\\
\geq \frac{3\eps m}{8}  - \sum_{r=1}^m \sum_{u \in A_{i_r}'} \sum_{\ell=r+1}^m
c(u) \mathbf{P}_u\Bigl(\text{hit $\bigcup_{k\geq \ell} A'_{i_k}$} \Bigr)\\
\geq \frac{3\eps m}{8}  - m \sum_{r=1}^m \sum_{u \in A_{i_r}'} 
c(u) \mathbf{P}_u\Bigl(\text{hit $\bigcup_{k \geq r+1} A'_{i_k}$} \Bigr)
\geq \frac{\eps m}{4}
\end{multline*}
as claimed. The claim that $A$ has infinite capacity now follows immediately from \eqref{eq:capacity_claim}.
\end{proof}

\begin{proof}[Proof of \cref{prop:uniformlytransient}]
Asymptotic equality clearly implies quasi-asymptotic equality. Suppose that $h$ and $H\circ z$ are not asymptotically equal, so that there exists $\eps>0$ such that the set $A_{\eps}=\{ v\in V : |h(v)-H\circ z(v)| \geq \eps\}$ is infinite. Since $h$ and $H$ are bounded, it follows from the elliptic Harnack inequalities \eqref{eq:classicEHI} and \eqref{eq:DiscreteEHI} that there exists $\delta>0$ such that  \[ 
\bigcup_{v\in A_\eps}\Big\{u \in V : z(u) \in B\Big(z(v),\delta d\big(z(v),\partial D\big)\Big)\Big\} \subseteq A_{\eps/2}.\] 
Since $D$ is uniformly transient, \cref{lem:CapComparison} implies that the sets 
\[
\Big\{z \in D : z \in B\Big(z(v),\delta d\big(z(v),\partial D\big)\Big)\Big\}
\]
have capacity bounded below by some positive constant, and a simple variation on the proof of \cref{lem:CapComparison} yields that the sets 
\[\Bigl\{u \in V : z(u) \in B\Big(z(v),\delta d\big(z(v),\partial D\big)\Big)\Big\}\]
also have capacity bounded below by a positive constant. Since there must exist infinitely many disjoint sets of this form, we can apply \cref{lem:muchmoredetail} to deduce that $\Cap(A_{\eps/2})=\infty$. It follows that $h$ and $H \circ z$ are not quasi-asymptotically equal, concluding the proof.
\end{proof}

\subsection*{Acknowledgments}
The author was supported by a Microsoft Research PhD Fellowship. We thank the anonymous referees for their comments and corrections.

 \setstretch{1}
 \small{
  \bibliographystyle{abbrv}
  \bibliography{unimodularthesis.bib}
}

\end{document}